\pdfoutput=1
\RequirePackage{ifpdf}
\ifpdf % We~are running pdfTeX in pdf mode
\documentclass[pdftex]{sigma}
\else
\documentclass{sigma}
\fi

\usepackage{mathtools}
\usepackage{amsfonts}
\usepackage{amsthm}
\usepackage{amssymb}
\usepackage{dsfont}
\usepackage{enumitem}     % managing enumerations
\usepackage{cleveref}
\usepackage{stmaryrd}
\usepackage{mathrsfs}
\usepackage{makecell}
\usepackage{enumitem}
\usepackage{breqn}
\usepackage{longtable}
\usepackage{tikz}
\usepackage{tikz-cd}
\usetikzlibrary{matrix,shapes,decorations.pathreplacing}
\usepackage{multicol}
\usepackage{float}

\numberwithin{equation}{section}

\newtheorem{Theorem}{Theorem}[section]
\newtheorem{Corollary}[Theorem]{Corollary}
\newtheorem{Lemma}[Theorem]{Lemma}
\newtheorem{Proposition}[Theorem]{Proposition}

\theoremstyle{definition}
\newtheorem{Definition}[Theorem]{Definition}
\newtheorem{Example}[Theorem]{Example}
\newtheorem{Remark}[Theorem]{Remark}

\newtheorem*{Conjecture*}{Conjecture}
\newtheorem*{Notation*}{Notations}

\newcommand{\rank}{\operatorname{rank}}
\newcommand{\R}{\mathbb{R}}
\newcommand{\N}{\mathbb{N}}
\newcommand{\D}{\mathcal{D}}
\newcommand{\T}{T}
\newcommand{\M}{\mathrm{M}}
\newcommand{\Span}{\mathrm{Span}}
\newcommand{\Hom}{\mathrm{Hom}}
\newcommand{\V}{\mathcal{V}}
\newcommand{\W}{\mathcal{W}}

\newcommand{\p}{\mathfrak{p}}

\newcommand{\gl}{\mathfrak{gl}}

\newcounter{theoremintro}

\newcounter{countfactD}

\newcounter{countfactDD}

\newcounter{countquestionD}

\newcounter{countquestionDD}

\theoremstyle{plain}
\newtheorem{question}{Question}
\newtheorem{questionD}[countquestionD]{Question}

\theoremstyle{plain}
\newtheorem{fact}{Fact}
\newtheorem{factD}[countfactD]{Fact}
\newtheorem{factDD}[countfactDD]{Fact}

\newtheorem{Theorem*}[theoremintro]{Theorem}

\theoremstyle{plain}
\newtheorem{theoremx}{Theorem}

\theoremstyle{plain}
\newtheorem{thmxx}{Theorem}

\newtheorem{corollaryx}[theoremx]{Corollary}
\newtheorem{corollaryxx}{Corollary}

\makeatletter
\def\Ddots{\mathinner{\mkern1mu\raise\p@
		\vbox{\kern7\p@\hbox{.}}\mkern2mu
		\raise4\p@\hbox{.}\mkern2mu\raise7\p@\hbox{.}\mkern1mu}}
\makeatother

\begin{document}
\thispagestyle{empty}
% \renewcommand{\PaperNumber}{***}

% \FirstPageHeading

\ShortArticleName{$B'$-orbits on flag varieties and symmetry breaking}

\ArticleName{$B'$-orbits on flag varieties and symmetry breaking}

% % Names of the authors for the title of the paper
\Author{Valentin MASSICOT}

\AuthorNameForHeading{V.~Massicot}

\Address{Université de Reims Champagne-Ardenne, UMR 9008, Laboratoire de Mathématiques de Reims, 51100 Reims, France}
\Address{French-Japanese Laboratory of Mathematics and its Interactions, IRL2025 CNRS and The University of Tokyo, Tokyo, Japan}
%  % Address of First Author
% \EmailD{\mail{valentin.massicot@univ-reims.fr}} % E-mail address of First Author
% \URLaddressD{\url{https://valentinmassicot.perso.math.cnrs.fr/}} %URL address of First Author

% % In the case of the same organization, please use the following standard
% %\Author{First Names LASTNAME and Second COAUTHOR}
% %\AuthorNameForHeading{F.N. Lastname and S. Coauthor}
% %\Address{Address of Author(s), Country}
% %\Email{\mail{email1@address}, \mail{email2@address}}
% %\URLaddress{\url{http://www.home.org/~myHome1/}}

% \ArticleDates{Received ???, in final form ????; Published online ????}

\Abstract{Motivated by branching problems for principal series representations of the Lie group $G = GL(n,\R)$, we consider all pairs $(G', P)$ with $G'$ being the Levy factor of a parabolic subgroup of $G$ and $P$ a parabolic subgroup of $G$ for which a Borel subgroup $B'$ of $G'$ has finitely many orbits on $G/P$.
We classify all such pairs $(G',P)$ for which $B'$-orbits on the generalized flag variety $G/P$ are determined by invariant functions inspired from the Bruhat decomposition.
We also describe explicitly the double coset space $B'\backslash G/P$ as well as the closed $B'$-orbits on $G/P$ whenever $B'$-orbits are computed by these invariant functions.}

\Keywords{double coset space; flag variety; Bruhat decomposition; invariant functions; branching problems; symmetry breaking} %up to 6 key words
% %Please type here List of Keywords for your article separated by semicolon.

\Classification{22E45; 57S25; 14M15} % e.g. 35A30; 81Q05
% % For 2020 Mathematics Subject Classification see https://mathscinet.ams.org/mathscinet/msc/msc2020.htmlq

\section{Introduction}

Let $G' \subset G$ be a pair of groups and $(V,\pi)$ be an irreducible representation of $G$. Branching problems ask about the behaviour of the restriction $\pi_{|G'}$. If $\pi$ happens to be unitary and $G$ and $G'$ locally compact, the Mautner theorem (see \cite[Theorem 1.2.]{Mautner}) implies that $\pi_{|G'}$ decomposes (uniquely if $G$ is a type I group) as a direct integral of irreducible unitary representations of $G'$. When $\pi$ is no longer unitary (for instance a smooth representation of a Lie group), there is no universal notion of irreducible decomposition. In this broader context, one may study restrictions of representations through the space of symmetry breaking operators (SBOs for short) which are elements of $\Hom_{G'}(\pi_{|G'},\tau)$ where $\tau$ ranges over an appropriate class of representations of $G'$. 

We are interested in the setting where $G' \subset G$ is a pair of real reductive groups. In this configuration, a theorem of Casselman (see \cite[Theorem 8.37]{Knapp}) asserts that every irreducible admissible representation of $G$ may be embedded in some principal series representation. Moreover, these principal series representations are known to be generically irreducible (see \cite[Theorem 7.2]{bruhat} for unitary principal series) so they will constitute our "appropriate class of representations". These representations of $G$ are obtained by induction with respect to parabolic subgroups and can thus be realized on sections of vector bundles over some generalized flag varieties. This geometric realization allows us to use functional analysis techniques to tackle down branching problems. More concretely, if $P$ is a parabolic subgroup of $G$ and $\chi$ is a finite dimensional representation of $P$, the quotient space $\mathcal V$ of $G \times V$ by the equivalence relation $(g,v) \sim (gp,\chi^{-1}(p)v)$ is a homogeneous vector bundle over $G/P$ and the induced representation $\operatorname{ind}_P^G \chi$ may be realized on the space of smooth sections $C^\infty(G/P,\mathcal V)$. If in addition we consider a parabolic subgroup $P'$ of $G'$ and an homogeneous vector bundle $\mathcal W$ over $G'/P'$, the Schwartz kernel theorem (\cite[Proposition 25]{schwartz}) implies that $G'$-intertwining operators $T : C^\infty(G/P,\V) \to C^\infty(G'/P',\W)$ correspond to $P'$-equivariant distribution kernels $K_T \in (D'(G/P,\V^*) \otimes W)^{P'}$ where $W$ is the fiber of $\W$ (see \cite[Proposition 3.2]{KobayashiSpeh}). Note that the support of such a distribution kernel is a $P'$-invariant closed subset of $G/P$. \\
This fact leads us to the following strategy: the topological analysis of $P'$-orbits of $G/P$ provides an information on $K_T$, hence on $T$. More concretely, one may translate the equivariance of $K_T$ as a system of PDEs and use the knowledge of the support of $K_T$ as "boundary conditions" in order to solve this system (see \cite[Chapter 3]{KobayashiSpeh2}). \\
One way to describe the topological structure of $P'\backslash G/P$ is through the partial order defined as
\begin{equation*}
	\forall X_1,X_2 \in P'\backslash G / P, \hspace{3mm} X_1 \prec X_2 \overset{\mathrm{def}}{\iff} X_1 \subset \overline{X_2}
\end{equation*}
and the corresponding Hasse diagram (see \Cref{GL3-GL2} and \Cref{hasse_diagram2}).

Over the last decade, the properties of such double coset spaces have been studied extensively in the context of representation theory in relation to

\begin{itemize}[leftmargin=*, label = --, noitemsep, nolistsep]
	% \item If $P'$ is minimal and has an open orbit on $G/P$, $\Hom_{G'}(\pi_{|G'},\tau)$ is finite-dimensional for all irreducible admissible representations $\pi$, $\tau$ of $G$ and $G'$ (see \cite{KobayashiOshima} for the precise statement). Moreover, its dimension is the number of open $P'$-orbits for generic principal series representations $\pi$ and $\tau$. In this setting, each open orbit can be used to obtain a family of SBOs depending holomorphicaly on the induction parameters (see \cite{KobayashiSpeh}, \cite{KobayashiSpeh2} for the general scheme and its application to the symmetric pair $(G,G') = (O(n,1),O(n-1,1))$),
	
	\item construction of symmetry breaking operators for branching problems of principal series representations. In this context, for $P'$ minimal which has an open orbit on $G/P$, $\Hom_{G'}(\pi_{|G'},\tau)$ is finite-dimensional for all irreducible admissible representations $\pi$, $\tau$ of $G$ and $G'$ (see \cite{KobayashiOshima} for the precise statement). Moreover, its dimension is the number of open $P'$-orbits for generic principal series representations $\pi$ and $\tau$. In this setting, each open orbit can be used to obtain a family of SBOs depending meromorphicaly on the induction parameters (see \cite{KobayashiSpeh}, \cite{KobayashiSpeh2} for the general scheme and its application to the symmetric pair $(G,G') = (O(n,1),O(n-1,1))$, see \cite{Leontiev} for the symmetric pair $(G,G') = (O(p,q),O(p-1,q))$),
	
	\item analysis of symmetry breaking operators. More precisely, starting from meromorphic families of intertwining operators supported on an open orbit, one may use residue calculus to obtain SBOs supported on smaller orbits (see \cite{KobayashiSpeh}). Furthermore, T. Kobayashi and M. Pevzner proposed a new method based on a so-called generating operator to obtain SBOs supported on large orbits, starting from known SBOs supported on small orbits (see \cite{normal_derivatives}, \cite{Generating_Rankin_Cohen}, \cite{discretetocontinuous}). These procedures can be thought as going up or down in the Hasse diagram of $P'\backslash G/P$.
	
	\item discrete decomposability in the framework of the BGG category $\mathcal O^\p$ (see \cite{KobayashiVerma}).
	
	% \item A symmetry breaking operator $T : C^\infty(G/P,\V) \to C^\infty(G'/P',\W)$ is said to be differential with respect to an embedding $p : G'/P' \to G/P$ if it does not increase the support, that is $\supp \, T(f) \subset p(\supp f)$. One may prove that a SBO $T$ is differential in this sense if and only if $\supp \, K_T$ is a fixed point of the $P'$-action on $G/P$ (see for instance \cite[Lemma $2.22$]{fmethod}). This fact can then be used to classify differential SBOs in concrete settings (see for instance \cite{DitlevsenLabriet} for the symmetric pair $(GL(n,\R),GL(n-1,\R))$ which is related to our study).
	\item differential symmetry breaking operators. In fact, differential symmetry breaking operators are caracterized among all symmetry breaking operators by the fact that their distributional kernel is supported at a $P'$-fixed point of $G/P$ (see for instance \cite[Lemma $2.22$]{fmethod}). This fact can then be used to classify differential SBOs in concrete settings (see for instance \cite{DitlevsenLabriet} for the symmetric pair $(GL(n,\R),GL(n-1,\R))$ which is related to our study).
\end{itemize}

\medskip

In this work, we are interested in the setting where $G = GL(n,\R)$, $G'= GL(n_1,\R) \times \cdots \times GL(n_k,\R)$ is the Levi factor of a parabolic subgroup of $G$, $P$ is a parabolic subgroup of $G$ and $P'= B'$ the standard Borel subgroup of $G'$, namely the set of upper triangular matrices in $G'$.

An extreme case is given by $G' = G$ and $P = B$ the Borel subgroup of $G$. In this situation, the Bruhat decomposition asserts that $B \backslash G/B$ is in bijection with the symmetric group $\mathfrak S_n$ under the map $\sigma \mapsto BM_\sigma B$ where $M_\sigma$ is the permutation matrix associated to $\sigma$.
Moreover, it is known (see \cite[§10.5]{fulton_tableaux}) that both the $B$-orbits on $G/B$ and the partial order on $B\backslash G/B$ are determined by the maps
\begin{equation*}
	r_{ij} : \begin{array}{ccc}
	\mathfrak S_n & \to & \N_{\geqslant 0} \\
	\sigma & \mapsto & \operatorname{Card}\{s\in \{1,\dots,j\} \mid \sigma(s) \geqslant n-i+1\}
\end{array}
\end{equation*}
in the following sense:
\begin{fact}[Combinatorial formulation]\label{Fact Bruhat}
	The map 
	\begin{equation*}
		r : \begin{array}{ccc}
	B\backslash G/B & \to & \N_{\geqslant 0}^{\frac{(n-2)(n-1)}{2}} \\
	B M_\sigma B & \mapsto & (r_{ij}(\sigma))_{1 \leqslant i,j \leqslant n-1}
\end{array}
	\end{equation*}
	is injective and induces a poset isomorphism onto its image.
\end{fact}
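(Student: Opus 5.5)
The plan is to pass from double cosets to flags, realize the $r_{ij}$ as ranks of corner submatrices, and then reduce the poset claim to the classical tableau criterion for Bruhat order.

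\textbf{Invariance.} For $g \in G$ and $1 \leqslant i,j \leqslant n-1$, let $g_{[i],[j]}$ denote the submatrix of $g$ formed by its last $i$ rows and first $j$ columns. Left multiplication by an upper triangular matrix replaces each of the last $i$ rows by a combination of itself and the rows below it. Right multiplication by an upper triangular matrix replaces each of the first $j$ columns by a combination of itself and the columns before it. Hence $\rank g_{[i],[j]}$ is constant on $BgB$. For $g = M_\sigma$ the submatrix is a $0/1$ matrix with at most one $1$ per row and column, so its rank counts the $s \leqslant j$ with $\sigma(s) \geqslant n-i+1$. This count is exactly $r_{ij}(\sigma)$, so $r$ is well defined on $B\backslash G/B$.

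\textbf{Injectivity.} Since $r_{ij}(\sigma) - r_{i,j-1}(\sigma) - r_{i-1,j}(\sigma) + r_{i-1,j-1}(\sigma) = 1$ exactly when $\sigma(j) = n-i+1$, inclusion–exclusion recovers $\sigma$. The boundary values are needed here: $r_{0j} = r_{i0} = 0$, and $r_{nj} = j$, $r_{in} = i$ are the same for every $\sigma$, so restricting to $i,j \leqslant n-1$ loses nothing. Together with the Bruhat decomposition $G = \bigsqcup_\sigma BM_\sigma B$, this gives injectivity.

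\textbf{Order: setup.} I expect this to be the main step. Write $\sigma \leqslant \tau$ for the Bruhat order on $\mathfrak S_n$, defined via closures: $BM_\sigma B \subset \overline{BM_\tau B}$, equivalently via subwords. The claim is that this holds if and only if $r_{ij}(\sigma) \leqslant r_{ij}(\tau)$ for all $i,j$.

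\textbf{Order: first direction.} Suppose $BM_\sigma B \subset \overline{BM_\tau B}$. The set $\{g : \rank g_{[i],[j]} \leqslant m\}$ is closed and $B\times B$-stable, since it is cut out by the vanishing of minors. Taking $m = r_{ij}(\tau)$, this closed set contains $BM_\tau B$, hence its closure, hence $M_\sigma$. So $r_{ij}(\sigma) \leqslant r_{ij}(\tau)$.

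\textbf{Order: converse.} Assume $r(\sigma) \leqslant r(\tau)$ componentwise and $\sigma \neq \tau$. I would argue by induction on $\ell(\tau) - \ell(\sigma)$, following the standard combinatorial argument in Fulton.
\begin{itemize}
\item First, produce a transposition $t$ with $\sigma < \sigma t \leqslant \tau$ in rank data. Take the smallest position where the rank matrices differ. Choose columns $a < b$ with $\sigma(a) < \sigma(b)$ such that swapping the values raises $r$ by $1$ exactly on the rectangle $[a,b-1] \times [n-\sigma(b)+1, n-\sigma(a)]$, while still staying below $r(\tau)$.
\item Second, show $BM_\sigma B \subset \overline{BM_{\sigma t}B}$ by an explicit one-parameter family in $GL_2$ embedded in rows and columns $a,b$. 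The family is $M_{\sigma t}$ times a unipotent element $u(x)$ chosen so that the product lies in $BM_{\sigma t}B$ for $x \neq 0$ and lies in $BM_\sigma B$ in the limit $x \to \infty$ after rescaling by the torus.
\end{itemize}
Transitivity of closure inclusion then finishes the induction.

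The delicate point is the choice of $a,b$ in the first bullet. I would follow Fulton's argument: take $a$ minimal with $\sigma(a) \neq \tau(a)$, so that $\sigma(a) < \tau(a)$ by the rank inequalities. Then take $b > a$ minimal with $\sigma(a) < \sigma(b) \leqslant \tau(a)$. With this choice, the rank increment stays dominated by $r(\tau)$ on the whole rectangle.
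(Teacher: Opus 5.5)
Your proof is correct and is essentially the standard argument in Fulton's \emph{Young Tableaux}, \S10.5. That reference is all the paper offers for this Fact: it is quoted, not proved.

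The one step you only assert does hold. For $(i,j)$ in the rectangle, write $v=n-i+1$. Minimality of $b$ gives
\[
r_{ij}(\sigma)=r_{n-\tau(a),j}(\sigma)+\#\{s<a : v\leqslant\sigma(s)\leqslant\tau(a)\}.
\]
Since $\tau=\sigma$ before position $a$, $r_{ij}(\tau)$ exceeds $r_{n-\tau(a),j}(\tau)$ by at least that same count plus one, the extra one coming from $s=a$. Together with $r_{n-\tau(a),j}(\sigma)\leqslant r_{n-\tau(a),j}(\tau)$, this gives $r(\sigma t)\leqslant r(\tau)$.
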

Note that for a permutation $\sigma \in \mathfrak S_n$, one has $r_{ij}(\sigma) = \rank(M_\sigma^{ij})$ where given a matrix $g$, $g^{ij}$ is the submatrix obtained from $g$ by intersecting the last $i$ rows with the first $j$ columns.
Since the rank of such submatrices is a $(B \times B)$-invariant (i.e. invariant under both right and left translation by $B$), we may rephrase \Cref{Fact Bruhat} as
\begin{factD}[Invariant-theoretic formulation]
	The map 
	\begin{equation*}
		r : \begin{array}{ccc}
	B\backslash G/B & \to & \N_{\geqslant 0}^{\frac{(n-2)(n-1)}{2}} \\
	BgB & \mapsto & (\rank(g^{ij}))_{1 \leqslant i,j \leqslant n-1}
\end{array}
	\end{equation*} is a well-defined injection and induces a poset isomorphism onto its image.

\end{factD}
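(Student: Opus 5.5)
The plan is to reduce the statement to \Cref{Fact Bruhat} through the Bruhat decomposition $G=\bigsqcup_{\sigma\in\mathfrak S_n} BM_\sigma B$. The argument has three steps.

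First, I check that $g\mapsto \rank(g^{ij})$ is constant on each double coset $BgB$, so that $r$ is well defined. Write $b_1 g b_2$ with $b_1,b_2\in B$ upper triangular. The last $i$ rows of $b_1 g$ are linear combinations of the last $i$ rows of $g$, since row $k$ of $b_1g$ equals $\sum_{l\geqslant k}(b_1)_{kl}\,(\text{row } l \text{ of } g)$. The corresponding $i\times i$ block of $b_1$ is upper triangular and invertible, so $(b_1g)^{ij}=(b_1)_{\mathrm{last}\,i,\mathrm{last}\,i}\,g^{ij}$ and the rank is unchanged. Symmetrically, the first $j$ columns of $gb_2$ are combinations of the first $j$ columns of $g$ through the invertible upper-left $j\times j$ block of $b_2$, so $(gb_2)^{ij}=g^{ij}(b_2)_{\mathrm{first}\,j,\mathrm{first}\,j}$. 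Hence $\rank(g^{ij})$ is a $(B\times B)$-invariant, and the map factors through $B\backslash G/B$.

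Second, I evaluate the invariant at the representative $M_\sigma$ of each coset. The submatrix $M_\sigma^{ij}$ is a partial permutation matrix: it has a $1$ in column $s\leqslant j$ exactly when $\sigma(s)\geqslant n-i+1$ (with the convention that the entry $(\sigma(s),s)$ is $1$). Its rank is therefore the number of such $s$, which is $r_{ij}(\sigma)$. So on $B\backslash G/B$ the invariant-theoretic map coincides with the combinatorial map of \Cref{Fact Bruhat} under $\sigma\mapsto BM_\sigma B$. Injectivity and the poset isomorphism onto the image then transfer verbatim from \Cref{Fact Bruhat}, since the closure order is an intrinsic property of the double cosets.

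The only real subtlety is to make sure the closure order in \Cref{Fact Bruhat} is the same order used here, namely inclusion in the closure of the orbit. If one wants this independently of \cite{fulton_tableaux}, I would argue as follows. Lower semicontinuity of rank shows that $\{g : \rank(g^{ij})\leqslant r_{ij}(\sigma)\ \forall i,j\}$ is closed and $B\times B$-stable. Thus $X_1\subset\overline{X_2}$ forces $r(X_1)\leqslant r(X_2)$ entrywise. The converse, that entrywise comparison of rank matrices implies closure inclusion, is the substantive combinatorial content of \Cref{Fact Bruhat}, and I would not reprove it but simply invoke it.
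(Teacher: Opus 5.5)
Your proposal is correct and takes the same approach as the paper. The paper justifies the statement only by noting that $r_{ij}(\sigma)=\rank(M_\sigma^{ij})$ and that these ranks are $(B\times B)$-invariant, so that it is a rephrasing of \Cref{Fact Bruhat}; you supply what it leaves implicit: the invariance computation via the invertible triangular corner blocks of $b_1,b_2$, the convention $M_\sigma e_s=e_{\sigma(s)}$ needed for the rank identity, and the easy direction of the order comparison by lower semicontinuity.
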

This fact may be formulated more algebraicly. Indeed, if we denote by $P_i$ the subgroup
\begin{equation*}
	P_i \coloneq \left\{\begin{pmatrix}
x & y \\
0 & z
\end{pmatrix}x \in GL(i,\R), z \in GL(n-i,\R), y \in M_{i,n-i}(\R)\right\} \subset GL(n,\R),
\end{equation*}
then $(P_i)_{1 \leqslant i \leqslant n-1}$ is exactly the set of all (both standard and non-standard) maximal parabolic subgroups of $G$ containing $B$ and the map
\begin{equation*}
	\begin{array}{ccc}
	GL(n,\R) & \to & \N_{\geqslant 0}^{\frac{(n-2)(n-1)}{2}} \\
	g & \mapsto & (\rank(g^{ij}))_{1 \leqslant i,j \leqslant n-1}
\end{array}
\end{equation*}
induces a bijection between $P_i\backslash G/P_j$ and its image. Consequently, we may state \Cref{Fact Bruhat} as 
\begin{factDD}[Algebraic formulation]
	The map 
	\begin{equation*}
		\begin{array}{ccc}
	B\backslash G/B & \to & \displaystyle \prod_{1 \leqslant i,j \leqslant n-1}P_i \backslash G/P_j \\
	BgB & \mapsto & (P_igP_j)_{1 \leqslant i,j \leqslant n-1}
\end{array}
	\end{equation*}
	is injective and induces a poset isomorphism onto its image.
\end{factDD}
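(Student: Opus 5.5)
The plan is to deduce the algebraic formulation directly from the invariant-theoretic formulation (the previous Fact). The key step is to show that for each pair $(i,j)$ the double coset $P_igP_j$ is determined by $\rank(g^{ij})$, and conversely. Once this is in place, the map $BgB \mapsto (P_igP_j)_{i,j}$ becomes the map $BgB \mapsto (\rank(g^{ij}))_{i,j}$ composed with a coordinatewise bijection, and injectivity transfers. For the poset statement we order $\prod P_i\backslash G/P_j$ by the product of closure orders.

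\textbf{Step 1: rank is a complete invariant for $P_i\backslash G/P_j$.} Write $g$ in blocks adapted to rows $(i, n-i)$ and columns $(j, n-j)$. Left multiplication by $P_i$ acts on rows, and right multiplication by $P_j$ acts on columns. The submatrix $g^{ij}$ (last rows $\times$ first columns) then transforms as
\begin{equation*}
g^{ij}\mapsto z\, g^{ij}\, x', \qquad z\in GL(\cdot,\R),\ x'\in GL(j,\R),
\end{equation*}
up to the correct identification of which diagonal block of $P_i$ acts on the last $i$ rows. So its rank is invariant. Conversely, $P_i\backslash G/P_j$ is finite and indexed by $W_{P_i}\backslash \mathfrak S_n/W_{P_j}$. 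These double cosets of Young subgroups are classified by $2\times 2$ contingency matrices with fixed margins, and such a matrix is determined by a single entry. That entry is exactly $\operatorname{Card}\{s\le j \mid \sigma(s)\ge n-i+1\}=r_{ij}(\sigma)$. Hence $P_igP_j\mapsto \rank(g^{ij})$ is a bijection onto $\{\max(0,i+j-n),\dots,\min(i,j)\}$.

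\textbf{Step 2: order on each factor.} The closure order on $P_i\backslash G/P_j$ must match the natural order on ranks. One direction comes from lower semicontinuity of rank: $\{\rank g^{ij}\le r\}$ is closed. For the other direction, rank-$r$ matrices lie in the closure of rank-$(r+1)$ matrices within the constraints. We realize this inside the Bruhat order on permutations by exhibiting explicit curves, or we just invoke the previous Fact, since $P_igP_j$ is a union of $B$-orbits.

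\textbf{Step 3: combine.} Via Step 1, the map in the statement is identified with $r$ of the previous Fact. Injectivity and the poset isomorphism onto the image then follow, because closure in each factor corresponds to $\le$ on $\rank(g^{ij})$.

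I expect the main obstacle to be orientation and bookkeeping in Step 1. Here $P_i$ is defined as block upper triangular with an $i\times i$ top-left block, while $g^{ij}$ uses the last $i$ rows, so the row block must be matched correctly. I would also have to check carefully that the closure orders on the product agree with the partial order used in the previous Fact. As a first attempt, I would verify everything on the example $n=3$.
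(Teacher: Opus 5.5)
Your proposal is correct and is essentially the paper's own argument: the paper likewise notes that the $P_i$ are exactly the maximal parabolics containing $B$, that $g\mapsto\rank(g^{ij})$ parametrizes the double cosets of a pair of them, and then restates the invariant-theoretic Fact. Your Step 2, matching the closure order on each factor with the order on ranks, fills in a point the paper leaves implicit. The bookkeeping you anticipate resolves as expected: $\rank(g^{ij})$ is invariant under $P_{n-i}\times P_j$, not $P_i\times P_j$ as literally written in your Step 1 (and in the paper's remark), and this is harmless because $\{P_{n-i}\}_{1\leqslant i\leqslant n-1}=\{P_i\}_{1\leqslant i\leqslant n-1}$, so the product over all $(i,j)$ is unchanged.
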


Let us return to the case where $G'= GL(n_1,\R) \times \cdots \times GL(n_k,\R)$ and $P$ is a general parabolic subgroup of $G$. We ask the following question:
\setcounter{question}{1}
\begin{question}[Invariant-theoretic formulation]\label{Question 2}
	When are $B'$-orbits on $G/P$ determined by the $(B' \times P)$-invariant maps $g \mapsto \rank(g^{ij})$?
\end{question}
Again, we may formulate it more algebraically using the sets $\mathcal Q_{B'}, \mathcal Q_{P}$ of (possibly non-standard) maximal parabolic subgroups of $G$ containing $B'$ and $P$ respectively.
\setcounter{countquestionD}{1}
\begin{questionD}[Algebraic formulation]
	When is the map 
	\begin{equation*}
		\Psi : \begin{array}{ccc}
		B' \backslash G/P & \to & \displaystyle \prod_{Q_1 \in \mathcal Q_{B'}, Q_2 \in \mathcal Q_{P}} Q_1 \backslash G/Q_2 \\
		B'gP & \mapsto & (Q_1gQ_2)_{\substack{Q_1 \in \mathcal Q_{B'} \\ Q_2 \in \mathcal Q_{P}}}
	\end{array}
	\end{equation*}
	injective?
\end{questionD}

Considering there is only finitely many possible choices of a submatrix, hence of rank functions (and finitely many values for each rank), the codomain of $\Psi$ is finite.
In particular, $\Psi$ can only be injective if $B'\backslash G/P$ is finite.
The classification of the pairs $(G',P)$ for which $B'$ has finitely many orbits on $G/P$ has been achieved by T. Kobayashi using visible actions on complex manifolds in \cite{ActionVisible} and will be our starting point (see \Cref{flag varieties} for the definition of $P_{\underline m}$).

\setcounter{fact}{2}
\begin{fact}[\cite{ActionVisible}]\label{Fact Kob}
	The pairs $(G',P)$ for which $B'$ has finitely many orbits on $G/P$ are precisely the pairs $(GL(n_1,\R)\times \cdots \times GL(n_k,\R), P_{\underline m})$ satisfying one of the following conditions:
\begin{equation*}
	\begin{array}{rcccc}
		0) & k = 2, & \text{any } N, & l=2, & \text{any } M,\\
		\mathrm{I}) & k = 3, & N = 1, & l = 2, & M \geqslant 2,\\
		\mathrm{II}) & k = 3, & N \geqslant 2, & l = 2, & M = 2, \\
		\mathrm{III}) & \text{any } k, & \text{any } N, & l = 2, & M = 1, \\
		\mathrm{I}') & k = 2, & N \geqslant 2, & l = 3, & M = 1,\\
		\mathrm{II}') & k = 2, & N = 2, & l = 3, & M \geqslant 2,\\
		\mathrm{III}') & k = 2, & N = 1, & \text{any } l, & \text{any } M, \\
	\end{array}
\end{equation*}
where $N = \min(\underline n)$ and $M = \min(\underline m)$.
\end{fact}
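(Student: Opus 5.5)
The plan is to reduce the real statement to a complex one, translate finiteness into sphericity of a product of flag varieties, and then run a dimension count (for necessity) and explicit normal forms (for sufficiency). Throughout, $n_1,\dots,n_k$ are the block sizes of $G'$ and $m_1,\dots,m_l$ those of $P_{\underline m}$.

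\textbf{Reduction to the complex case.} All groups involved are split over $\R$: $G=GL(n,\R)$, $G'$ a standard Levi, $B'$ its upper triangular Borel, $P=P_{\underline m}$. I would first show that $B'\backslash G/P$ is finite if and only if $B'_{\mathbb C}\backslash G_{\mathbb C}/P_{\mathbb C}$ is finite.
\begin{itemize}[leftmargin=*, label=--, noitemsep]
\item One direction: every $B'_{\mathbb C}$-orbit meets $G/P$ in finitely many $B'$-orbits. This is the standard Whitney-type finiteness of real points of an algebraic orbit under a real algebraic group.
\item Conversely, if there are infinitely many complex orbits, some $B'_{\mathbb C}$-invariant rational function on $G_{\mathbb C}/P_{\mathbb C}$ is nonconstant. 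Since everything is defined over $\mathbb Q$, such a function can be taken real. Its values on the Zariski dense set $G/P$ then separate infinitely many $B'$-orbits.
\end{itemize}
So it suffices to work over $\mathbb C$, which is Kobayashi's setting.

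\textbf{Sphericity and necessity.} Next I would use the standard equivalence: $B'_{\mathbb C}$ has finitely many orbits on $X=G_{\mathbb C}/P_{\mathbb C}$ if and only if $X$ is $G'_{\mathbb C}$-spherical (Brion--Vinberg). Equivalently, $G'_{\mathbb C}$ has finitely many orbits on $G'_{\mathbb C}/B'_{\mathbb C}\times X$, i.e. $B'_{\mathbb C}$ has an open orbit on $X$.

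For necessity, the first tool is the dimension inequality
\begin{equation*}
\dim B' \;\geqslant\; \dim G/P, \qquad\text{i.e.}\qquad \sum_{i}\frac{n_i(n_i+1)}{2}\;\geqslant\;\sum_{a<b} m_am_b .
\end{equation*}
The second tool is restriction: an open orbit is inherited by smaller configurations, obtained by merging blocks of $\underline m$ or by restricting to sub-Levi subgroups via fibrations $G/P_{\underline m}\to G/P_{\underline m'}$. With these two tools I would show:
\begin{itemize}[leftmargin=*, label=--, noitemsep]
\item $l\geqslant 4$ forces $k\leqslant 2$ with $N=1$;
\item $k\geqslant 4$ forces $l\leqslant 2$ with $M=1$;
\item $k=l=3$ is impossible;
\item the small residual cases, such as $k=3$, $l=2$ with $N,M\geqslant 2$ and $M\geqslant 3$, fail.
\end{itemize}
Each residual failure would be exhibited by an explicit nonconstant $B'$-invariant rational function, namely a ratio of products of minors of a representative $g$.

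\textbf{Sufficiency.} For each of the cases 0)--III$'$) I would produce finitely many normal forms, using row operations by $B'$ and column operations by $P$.
\begin{itemize}[leftmargin=*, label=--, noitemsep]
\item Case 0) is the Bruhat decomposition of $P_{\underline n}\backslash G/P_{\underline m}$ refined by $B'$. After fixing the $P_{\underline n}$-double coset, the remaining stabilizer is a product of parabolic subgroups of the blocks $GL(n_i)$, and each has finitely many $B_{n_i}$-orbits.
\item The remaining cases come in pairs related by duality $g\mapsto {}^tg^{-1}$ together with the swap of roles, and this pairs I with I$'$, II with II$'$ and III with III$'$. For each pair I would reduce to a classical finite-type problem. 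For III, $M=1$, the relevant variety is $\mathbb P^{n-1}$ or its dual, and $B'$ acts on vectors block by block with finitely many orbits. Cases I and II are the mirabolic-type cases.
\end{itemize}

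\textbf{Main obstacle.} The main obstacle is the classification bookkeeping in the necessity direction. The dimension count does not by itself exclude several borderline cases, for example $k=3$ with $N\geqslant 2$, $l=2$ and $M=3$. There one must find explicit invariants, and I would try cross-ratio-type invariants built from minors. As a cross-check I would compare with the Magyar--Weyman--Zelevinsky classification of multiple flag varieties of finite type for $GL_n$.
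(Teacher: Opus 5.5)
The paper does not prove this statement. It quotes it from Kobayashi \cite{ActionVisible}, where it is obtained through visible actions, starting from the case $k=l=2$. The normal forms of \Cref{Proof} only re-confirm finiteness in most of the listed cases and say nothing about necessity, so your proposal has to stand on its own. Its framework is sound: descent to $\mathbb C$, and finiteness $\iff$ open $B'$-orbit by Brion--Vinberg. The genuine gap is that necessity, the heart of the classification, is only announced.

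\textbf{Necessity.} Your dimension inequality should read $\dim B'-1\geqslant\dim G/P_{\underline m}$, since scalars act trivially; this already kills $((2,2,2),(3,3))$, where $8<9$. Even corrected, it does not exclude families such as $\underline n=(2,2,n-4)$, $\underline m=(3,n-3)$, or $\underline n=(1,1,n-2)$, $\underline m=(2,2,n-4)$, for large $n$.
\begin{itemize}[leftmargin=*, label=--, noitemsep]
\item Merging blocks only transfers infiniteness from a coarser pair, and every coarsening of these pairs is finite.
\item For $l=2$ there is no fibration $G/P_{\underline m}\to G/P_{\underline m'}$ to restrict along.
\end{itemize}
What is missing is a lemma that \emph{shrinks} block sizes. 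For instance, let $F\subset E$ be spanned by the first $c_i$, resp.\ $a_i$, basis vectors of each block. Both are $B'$-stable. When $\dim F\leqslant m_1$ and $\dim E\geqslant n-m_l$, the flags with $F\subset V_1$ and $V_{l-1}\subset E$ form a closed $B'$-stable flag variety of $E/F$, on which $B'$ acts through $\prod_i\T(a_i-c_i,\R)$. Combined with the $\underline n\leftrightarrow\underline m$ symmetry discussed below, this reduces every excluded pair to a few minimal ones, such as $((1,1,1,1),(2,2))$, $((2,2,2),(3,3))$, $((2,2,2),(2,2,2))$ and $((1,1,1),(1,1,1))$, each killed by the corrected count. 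Without such a reduction, your cross-ratio invariants would have to be built uniformly in $n$ for every family, and none is written down.

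\textbf{Sufficiency.} Two steps are wrong as stated, and two cases are missing.
\begin{itemize}[leftmargin=*, label=--]
\item The map $g\mapsto{}^tg^{-1}$ (i.e.\ $V\mapsto V^\perp$) keeps $G'$ and merely reverses $\underline m$; this is exactly how the paper uses it. So it cannot pair I with I$'$, II with II$'$, III with III$'$, which requires exchanging $\underline n$ and $\underline m$. The exchange is valid for a different reason. A point of the open $G$-orbit on $G/B\times G/P_{\underline n}$ has stabiliser $B\cap w_0P_{\underline n}w_0^{-1}$, a conjugate of $B'$. Hence $B'$ has an open orbit on $G/P_{\underline m}$ iff $G$ has an open orbit on $G/B\times G/P_{\underline n}\times G/P_{\underline m}$, a condition symmetric in $(\underline n,\underline m)$. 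With Brion--Vinberg and block permutations, finiteness depends only on the multisets of block sizes.
\item In case $0)$, fixing the $P_{\underline n}$-orbit of $V\in\mathrm{Gr}(m_1,\R^n)$ does not leave $B'$ acting on a partial flag variety of $G'$. That orbit fibres over the pairs $(V\cap\R^{n_1},\bar V)$, where $\bar V$ is the image of $V$ in $\R^n/\R^{n_1}$, with affine fibre $\Hom(\bar V,\R^{n_1}/(V\cap\R^{n_1}))$. The unipotent radical of $P_{\underline n}$, which is not in $B'$, acts transitively on this fibre. You must still show that the $B'$-stabiliser of a base point has finitely many orbits on the fibre. It acts by $\phi\mapsto a\phi b^{-1}$ with $a,b$ upper triangular, so the orbits are indexed by partial permutation matrices; this is the real content of \Cref{Theorem B'}.
\item Cases I and II are not treated at all: ``reduce to a classical finite-type problem'' names no problem. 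The paper settles them through the explicit normal forms of \Cref{orbits I}, \Cref{orbits II1} and \Cref{orbits II2}.
\end{itemize}
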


Our main result is the following theorem (see \Cref{Theorem A} for the precise statement and \Cref{def Q_H} for notations).

\begin{Theorem*}
	Assume that $B'\backslash G/P$ is finite.
	\begin{enumerate}[label = \roman*)]
		\item If condition $\mathrm{I}')$ of \Cref{Fact Kob} holds with $m_1 = 1$, $m_2 \neq 1$ and $n \geqslant 5$ or if condition $\mathrm{II}')$ of \Cref{Fact Kob} holds, then the map $\Psi$ is not injective.
		\item In all remaining cases, $\Psi$ is injective.
	\end{enumerate}
\end{Theorem*}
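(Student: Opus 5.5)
The plan is to reformulate everything in terms of flags and linear algebra, and then treat the cases of \Cref{Fact Kob} one at a time.

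\textbf{Setup.} Write $G/P_{\underline m}$ as the variety of partial flags $0=V_0\subset V_1\subset\cdots\subset V_l=\R^n$ with $\dim V_j/V_{j-1}=m_j$. Write $\R^n=E_1\oplus\cdots\oplus E_k$ according to $\underline n$. Each $E_a$ carries its standard full flag $F^a_\bullet$, which is stabilized by $B'$.

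The maximal parabolics containing $B'$ are stabilizers of subspaces of the form $\bigoplus_{a\in S}E_a\oplus F^b_p$ (or variants in which the block order is permuted). The maximal parabolics containing $P$ are the stabilizers of the $V_j$. So $\Psi$ records exactly the numbers
\[
\dim\Big(V_j\cap \big(\textstyle\bigoplus_{a\in S}E_a\oplus F^b_p\big)\Big),
\]
for every admissible pair $(S,b,p)$ allowed by $\mathcal Q_{B'}$. These are the ranks $\rank(g^{ij})$.

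\textbf{Injectivity in the positive cases.} In each case, first construct explicit normal forms (representatives) of $B'\backslash G/P$. This is done by a Gaussian elimination argument: use $B'$ row operations inside each block together with $P$ column operations, mirroring the proof of the Bruhat decomposition. Then show that these dimension invariants separate the normal forms. A normal form will be encoded by a combinatorial datum, for instance a partial permutation matrix, or for case $\mathrm{III})$ a choice of line position in each block together with "generic combination" patterns. The invariants recover this datum by reading off successive jumps of the rank functions, exactly as the $r_{ij}$ recover $\sigma$ in \Cref{Fact Bruhat}.

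Cases $0)$, $\mathrm{III})$ and $\mathrm{III}')$ should be closest to classical Bruhat. In case $\mathrm{III}')$, $E_1$ is a line, so $G'\cong GL(1)\times GL(n-1)$ and one argues as for a mirabolic. Cases $\mathrm{I})$, $\mathrm{II})$ and $\mathrm{I}')$ need more care, because a subspace can sit in "general position" with respect to several blocks at once. There one must check that the intersections with sums $E_a\oplus F^b_p$ for mixed index sets detect how a line or plane projects onto each block. My plan for these is induction on $n$: project away from the last vector of some $F^b$, or intersect with a hyperplane from $\mathcal Q_{B'}$, and verify that the invariants are compatible with this reduction.

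\textbf{Non-injectivity.} For case $\mathrm{II}')$, and for case $\mathrm{I}')$ with $m_1=1$, $m_2\ne1$ and $n\geq5$, I will exhibit two explicit flags in distinct $B'$-orbits that share all invariants. In case $\mathrm{II}')$ we have $\R^n=E_1\oplus E_2$ with $\dim E_1=2$. Take $V_1$ a line in general position, and $V_2$ of dimension $\geq2$. Then the relative position of $V_1$ inside $V_2$, measured against the single line $F^1_1\subset E_1$ and a chosen line of $E_2$, gives a discrete invariant, such as whether $V_2$ contains the sum of two prescribed projections. This invariant is not captured by intersection dimensions with the subspaces $E_1\oplus F^2_p$ or $F^1_q$. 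I would verify distinctness of the two orbits with a finer invariant, namely dimensions of intersections of $V_2$ with $V_1+W$ for $B'$-stable $W$. The condition $n\ge5$ in case $\mathrm{I}')$ should be exactly what gives enough room for such a pair.

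\textbf{Main obstacle.} The hardest part will be the complete orbit classification and separation in the positive cases $\mathrm{I})$, $\mathrm{II})$ and $\mathrm{I}')$. In particular, the boundary of case $\mathrm{I}')$ (where $m_1=1$ or $m_2=1$, or $n\le4$) must be handled separately and checked carefully, possibly by direct enumeration.
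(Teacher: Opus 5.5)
Your strategy is the paper's: normal forms by elimination, separated by jumps of rank functions in the positive cases, and explicit pairs of flags in the negative ones. However, the proposal gives none of the normal forms, separation formulas or flags, and two of its concrete ingredients would fail.

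\textbf{First gap: $\mathcal Q_{B'}$ is misidentified.} Since $B'$ contains the full diagonal torus, its stable subspaces are exactly the $\bigoplus_a F^a_{p_a}$ with arbitrary $0\leqslant p_a\leqslant n_a$. So $\mathcal Q_{B'}$ consists of the stabilizers of all proper nonzero such subspaces. Your family $\bigoplus_{a\in S}E_a\oplus F^b_p$, even with blocks reordered, omits every subspace that cuts two blocks partially. (The pictorial remark after Theorem A, which counts $2n-2$ subgroups for $k=2$, has the same omission, but the proof in case $0)$ does not.) These omitted subspaces are exactly what the separation step needs. In case $0)$ the paper's list $\mathcal J$ contains all $\{j,\dots,n_1\}\cup\{n_1+i,\dots,n\}$. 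It detects that $e_j$ and $f_i$ sit in the same column of the normal form through jumps of these ranks, i.e.\ through intersections with $F^1_{j-1}\oplus F^2_{i-1}$. With your family, separation already fails for $\underline n=\underline m=(2,2)$. The planes $\Span(e_1+f_2,e_2+f_1)$ and $\Span(e_1+f_1,e_2+f_2)$ meet $F^1_1, E_1, F^2_1, E_2, E_1\oplus F^2_1, F^1_1\oplus E_2$ in the same dimensions $0,0,0,0,1,1$. Yet they meet $F^1_1\oplus F^2_1$ in dimensions $0$ and $1$, so they lie in different $B'$-orbits.

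\textbf{Second gap: the non-injectivity sketch does not work as stated.} In $\mathrm{II}')$ one has $\min(\underline m)\geqslant 2$, so $V_1$ is not a line. More seriously, your ``finer invariant'' $\dim\big(V_2\cap(V_1+W)\big)$ is not finer. Since $V_1\subset V_2$, the modular law gives $V_2\cap(V_1+W)=V_1+(V_2\cap W)$. Its dimension, $\dim V_1+\dim(V_2\cap W)-\dim(V_1\cap W)$, is read off from $\Psi$, so it can never separate two flags with the same image under $\Psi$. You need one of two things instead:
\begin{itemize}
\item a direct computation, as in the paper: solve $b'D_1p=D_2$ with unknown $b'\in B'$, $p\in P_{\underline m}$ and reach a contradiction from a few entries;
\item a genuinely non-modular lattice expression. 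For the paper's flags with $\underline n=(3,2)$, $\underline m=(1,2,2)$, the quantity $\dim\big(\big((V_1+E_2)\cap E_1+V_2\cap E_1\big)\cap F^1_1\big)$ is $1$ for $D_1$ and $0$ for $D_2$.
\end{itemize}

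\textbf{The subcase $m_3=1$.} The duality $V\mapsto V^\perp$ used in the paper has three relevant properties:
\begin{itemize}
\item it exchanges $(m_1,m_2,m_3)$ with $(m_3,m_2,m_1)$;
\item it replaces $B'$ by its transpose, which is conjugate to $B'$ in $G'$;
\item it transports the rank data, since $\dim(V^\perp\cap W^\perp)=n-\dim V-\dim W+\dim(V\cap W)$.
\end{itemize}
Hence $\Psi$ is also non-injective in case $\mathrm{I}')$ with $m_3=1$, $m_2\neq 1$, $n\geqslant 5$. This is why the precise Theorem A lists that subcase under $i)$, even though the introductory statement omits it. Your plan to prove injectivity in every remaining subcase of $\mathrm{I}')$ therefore cannot succeed there.
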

As explained previously, the injectivity of the map $\Psi$ means that the $B'$-orbits on $G/P$ are determined by the $(B' \times P)$-invariant ranks introduced in \Cref{def rang}.
This gives a rather explicit way to compute the $B'$-orbit of any element in $G/P$ as well as a theoretical reason why these double quotients are finite: aside from $\mathrm{II}')$ and a specific subcase of $\mathrm{I}')$, they are exactly the ones for which our rank functions entirely determine the $B'$-orbits.
In particular, this result is purely algebraic and holds not only on $\R$ but for any local field.
We also obtained an explicit description of the $B'$-orbits on $G/P$ in the case where $\Psi$ is injective. Finally, we determined the closed $B'$-orbits on $G/P$ and found that they are exactly the $B'$-fixed points on $G/P$.

% We plan on using these explicit description of $B'$-orbits to compute the residues of the families of SBOs introduced in \cite{DitlevsenLabriet}, thereby pursuing the study of restriction from $GL(n,\R)$ to $GL(n-1,\R)$ initiated in \cite{Frahm_2023}, \cite{DitlevsenFrahm} and \cite{DitlevsenLabriet}.
% One could also deduce from our work the $P'$-orbits on $G/P$ for any parabolic subgroup $P'$ of $G'$ by considering the equality $P' = LB'$ where $L$ is the Levi factor of $P'$.

\section{Preliminaries}

In this section, we recall the notion of flag variety and introduce some coordinate systems in order to do explicit computations with flags. We also introduce the invariant maps involved in \Cref{Question 2} and \Cref{Theorem A}.

\subsection{Parabolic subgroups and flag varieties}{\label{flag varieties}}

Let $n \in \N_{>0}$ be a positive integer. A composition of $n$ is a finite sequence $\underline n = (n_1,\dots,n_k)$ of positive integers satisfying $n_1+\cdots+n_k = n$ with $k \in \N_{>0}$. Another composition $(m_1,\dots,m_l)$ of length $l \in \N_{>0}$ of $n$ will be called a subcomposition of $\underline n$ if there exists a composition $(i_1,\dots,i_l)$  of $k$ such that 
\begin{equation}\label{eqn:subcomposition}
	\forall j \in \{1,\dots,l\}, \hspace{3mm}m_j = \sum_{s = 1}^{i_j} n_{i_1+\cdots+i_{j-1} + s}.
\end{equation}

If $\underline n = (n_1,\dots,n_k)$ is a composition of $n$, define $P_{\underline n}$ to be the standard parabolic subgroup of $GL(n,\R)$ given by

\begin{center}
	\begin{tikzpicture}[scale = 0.5,
		%Global config
		>=latex,
		line width=1pt,
		%Styles
		Brace/.style={
			decorate,
			decoration={
				brace,
				raise=-7pt
			}
		}
		]
		
		\matrix[% General option for all nodes
		matrix of nodes,
		text height=1.25ex,
		text depth=0.375ex,
		text width=1.625ex,
		align=center,
		left delimiter=(,
		right delimiter= ),
		column sep=2.5pt,
		row sep=2.5pt,
		%nodes={draw=black!10}, % Uncoment to see the square nodes.
		nodes in empty cells,
		] at (0,0) (M){ % Matrix contents  
			& & & & & & \\
			& & & & & & \\
			& & & & & & \\
			& & & & & & \\
			& & & & & & \\
			& & & & & & \\
			& & & & & & \\
		};
		% Drawing the sectors using matrix coordinate names.   
		\draw[thick,draw] (M-3-3.south east)
		-- (M-3-2.south)
		-- (M-2-2.center)
		-- (M-2-1.west)
		-- (M-1-1.north west)
		-- (M-1-7.north east)
		-- (M-7-7.south east)
		-- (M-7-6.south)
		-- (M-6-6.center)
		-- (M-6-5.west)
		-- (M-5-5.north west);
		% Drawing the braces.   
		\draw[Brace] (M-1-1.north)
		-- (M-2-2.north)
		node[midway]{\small $n_1$};
		
		\draw[Brace] (M-2-2.east)
		-- (M-3-3.east)
		node[midway]{\small $n_2$};
		
		\draw[Brace] (M-6-6.east)
		-- (M-7-7.east)
		node[midway]{\small $n_k$};
		
		\node at (0,0) {\rotatebox{-15}{$\ddots$}};
		
		\node at (-6.2,0) {$P_{\underline n} =$};
		
		\node at (5.6,0) {.};
	\end{tikzpicture}
\end{center}

In this work, the term "parabolic subgroup" will refer to any subgroup of $GL(n,\R)$ which is conjugate to a standard parabolic subgroup by a permutation matrix. These are exactly the parabolic subgroups corresponding to the standard Cartan subalgebra of $\gl(n,\R)$ equipped with a non-standard basis of its root system.

Recall (see for instance \cite[Section $1.2$]{Brion}) that given a composition $\underline n = (n_1,\dots,n_k)$ of $n$, a flag of type $\underline n$ is a sequence
\begin{equation*}
	V_1 \subset \cdots \subset V_{k-1} \subset V_k = \R^n
\end{equation*}
of subspaces satisfying $\dim V_i = n_1+\cdots+n_i$. The group $GL(n,\R)$ acts transitively on the set $\D_{\underline n}$ of flags of type $\underline n = (n_1,\dots,n_k)$. Moreover, if $(e_1,\dots,e_n)$ is the standard basis of $\R^n$, the isotropy subgroup of the flag
\begin{equation*}
	\Span(e_1,\dots,e_{n_1}) \subset \Span(e_1,\dots,e_{n_1+n_2}) \subset \cdots \subset \Span(e_1,\dots,e_{n_1+\cdots+n_k}) = \R^n
\end{equation*}
is the standard parabolic subgroup $P_{\underline n}$. This induces a bijection $\mathcal \D_{\underline n} \simeq GL(n,\R)/P_{\underline n}$ from which we define a topology on $\mathcal \D_{\underline n}$, turning the bijection into a homeomorphism.

If $A \in GL(n,\R)$ is decomposed as
\begin{equation*}
	A = \begin{pmatrix}
	A_1 & \cdots & A_{k}
\end{pmatrix}
\end{equation*}
with each $A_i \in M_{n,n_i}(\R)$, we will write the corresponding flag $D = (V_0,\dots,V_k) \in \D_{\underline n} \simeq G/P_{\underline n}$ as
\begin{equation*}
	D = \left[\begin{tabular}{c|c|c}
	$A_1$ & $\cdots$ & $A_{k}$
\end{tabular}\right].
\end{equation*}
Since the block $A_k$ corresponds to the subspace $V_k = \R^n$, changing it has no effect on $D$ and $D$ is entirely determined by $A_1,\dots,A_{k-1}$. The invertibility of $A$ implies that
\begin{equation*}
	\begin{pmatrix}
	A_1 & \cdots & A_{k-1}
\end{pmatrix}
\end{equation*}
has maximal rank. In particular, we can also realize the flag variety $\D_{\underline n}$ as the quotient of the space of maximal rank matrices $M_{n,n-n_k}(\R)_{\mathrm{reg}}$ under the right action of $P_{(n_1,\dots,n_{k-1})} \subset GL(n_1+\cdots+n_{k-1})$.

%If $A \in M_{n,n-n_k}(\R)_{\mathrm{reg}}$ is decomposed as
%$$A = \begin{pmatrix}
%	A_1 & \cdots & A_{k-1}
%\end{pmatrix}$$
%with each $A_i \in M_{n,n_i}(\R)$, we will also write the corresponding flag $D \in \D_{\underline n}$ as
%$$D = \left[\begin{tabular}{c|c|c}
%	$A_1$ & $\cdots$ & $A_{k-1}$
%\end{tabular}\right].$$
%Clearly, one may obtain the $M_{n,n-n_k}(\R)_{\mathrm{reg}}/P_{(n_1,\dots,n_{k-1})}$-description of a given flag from its $G/P_{\underline n}$-description by removing the block $A_k$ (in particular, changing the block $A_k$ has no effect on $D$).

The third description of $\D_{\underline n}$ (as a quotient of $M_{n,n-n_k}(\R)_{\mathrm{reg}}$)is more convenient for explicit computations as it contains less redundancy but the second one (as a quotient of $GL(n,\R)$) may allow us to describe specific flags via permutation matrices (see \Cref{Theorem B} for instance).

\begin{Remark}
	Since we realize flags as equivalence classes of invertible (resp. maximal rank) matrices, there is no unique way of representing a specific flag. More precisely, the action of the Levi part of $P_{\underline n}$ (resp. $P_{(n_1,\dots,n_{k-1})}$) allows us to perform arbitrary changes of basis in each block $A_i$ and its unipotent radical allows us to add any linear combination of columns of a block $A_i$ to any column of a different block $A_j$ as long as $i < j$.
\end{Remark}

\begin{Definition}{\label{def projections}}
	Consider for $n \in \N_{>0}$ a composition $\underline n = (n_1,\dots,n_k)$ of $n$ and a subcomposition $(m_1,\dots, m_l)$ of $\underline n$, such that $(i_1,\dots,i_l)$ satisfies \eqref{eqn:subcomposition}. We define the natural projection
	\begin{equation*}
		\Phi_{\underline n \to \underline m} : \begin{array}{ccc}
		\D_{\underline n} & \to & \D_{\underline m} \\
		(V_1 \subset \cdots \subset V_{k-1}) & \mapsto & \displaystyle \Big(V_{i_1} \subset V_{i_1+i_2} \subset \cdots \subset V_{i_1+\cdots+i_{l-1}}\Big)
	\end{array}.
	\end{equation*}
	In the special case where $\underline m = (n_1+\cdots + n_s,n-n_1-\cdots-n_s)$ for some $s \in \{1,\dots,k\}$, we will write $D_s$ for the projection of $D \in \D_{\underline n}$ onto $\D_{\underline m}$.
\end{Definition}

\begin{Remark}
	In our matrix realizations of $\D_{\underline n}$, the projection $\D_{\underline n} \to \D_{\underline m}$ may be realized by removing the last $m_l-1$ blocks describing a flag and merging the first $m_1$ blocks together, the next $m_2$ blocks together etc.
	
	Moreover, if $\D_{\underline n}$ and $\D_{\underline m}$ are realized as quotients of $GL(n,\R)$, the map $\Phi_{\underline n \to \underline m}$ is exactly the canonical map $GL(n,\R)/P_{\underline n} \to GL(n,\R)/P_{\underline m}$.
\end{Remark}

\begin{Example}{\label{exemple flags}}
	Let us consider $n = 4$, $\underline n = (1,1,1,1)$, $\underline m = (1,2,1)$ and let $(e_1,e_2,e_3,e_4)$ be the canonical basis of $\R^4$. The complete flag
	\begin{equation*}
	D = \Big(\Span(e_1+2e_2) \subset \Span(e_1+2e_2,e_1+e_3) \subset \Span(e_1+2e_2,e_1+e_3,e_4) \Big)
	\end{equation*}
	can be represented (using our second realization of $\D_{\underline n})$ as
	\begin{equation*}
		D = \left[\begin{tabular}{c|c|c}
		$1$ & $1$ & $0$ \\
		$2$ & $0$ & $0$ \\
		$0$ & $1$ & $1$ \\
		$0$ & $0$ & $0$
	\end{tabular}\right] = \left[\begin{tabular}{c|c|c}
		$1$ & $0$ & $0$ \\
		$2$ & $-2$ & $0$ \\
		$0$ & $1$ & $1$ \\
		$0$ & $0$ & $0$
	\end{tabular}\right] = \left[\begin{tabular}{c|c|c}
		$1$ & $0$ & $0$ \\
		$2$ & $1$ & $0$ \\
		$0$ & $-\frac{1}{2}$ & $1$ \\
		$0$ & $0$ & $0$
	\end{tabular}\right]
	\end{equation*}
	and its projection onto $\D_{(1,2,1)}$ is
	\begin{equation*}
		\Phi_{\underline n \to \underline m}(D) = \left[\begin{tabular}{c|cc}
		$1$ & $1$ & $0$ \\
		$2$ & $0$ & $0$ \\
		$0$ & $1$ & $1$ \\
		$0$ & $0$ & $0$
	\end{tabular}\right] = \left[\begin{tabular}{c|cc}
		$1$ & $0$ & $0$ \\
		$0$ & $1$ & $0$ \\
		$1$ & $-\frac{1}{2}$ & $1$ \\
		$0$ & $0$ & $0$
	\end{tabular}\right] = \left[\begin{tabular}{c|cc}
	$1$ & $0$ & $0$ \\
	$0$ & $1$ & $0$ \\
	$1$ & $0$ & $1$ \\
	$0$ & $0$ & $0$
	\end{tabular}\right].
	\end{equation*}
\end{Example}

\begin{Lemma}{\label{projection equivariant}}
	Let $\underline n$ be a composition of $n$ and $\underline m$ be a subcomposition of $\underline n$. Then, the map $\Phi_{\underline n \to \underline m}$ is $GL(n,\R)$-equivariant.
\end{Lemma}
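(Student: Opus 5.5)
The claim follows directly from the definition of $\Phi_{\underline n \to \underline m}$ together with the way $GL(n,\R)$ acts on flags. I will work in the subspace description of $\D_{\underline n}$. Here $g \in GL(n,\R)$ sends a flag $(V_1 \subset \cdots \subset V_{k-1})$ to $(gV_1 \subset \cdots \subset gV_{k-1})$. Under the identification $\D_{\underline n} \simeq GL(n,\R)/P_{\underline n}$, this is left multiplication, since the flag attached to $hP_{\underline n}$ is the image by $h$ of the standard flag.

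Fix $(i_1,\dots,i_l)$ satisfying \eqref{eqn:subcomposition} and write $t_j = i_1+\cdots+i_j$. For $D = (V_1 \subset \cdots \subset V_{k-1})$ we compute directly:
\begin{equation*}
	\Phi_{\underline n \to \underline m}(gD) = \big(gV_{t_1} \subset \cdots \subset gV_{t_{l-1}}\big) = g\cdot\big(V_{t_1} \subset \cdots \subset V_{t_{l-1}}\big) = g\cdot \Phi_{\underline n \to \underline m}(D).
\end{equation*}
We also need the image to be a flag of type $\underline m$. This holds because $\dim V_{t_j} = n_1+\cdots+n_{t_j} = m_1+\cdots+m_j$ by \eqref{eqn:subcomposition}, and $g$ preserves dimensions and inclusions.

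Alternatively, one can use the coset picture recorded in the Remark following \Cref{def projections}. Since $P_{\underline n} \subset P_{\underline m}$ when $\underline m$ is a subcomposition of $\underline n$, the map $\Phi_{\underline n \to \underline m}$ becomes the canonical map $hP_{\underline n} \mapsto hP_{\underline m}$. This map is obviously equivariant for left multiplication. Its well-definedness rests on the inclusion $P_{\underline n} \subset P_{\underline m}$: the block upper triangular shape for $\underline n$ is finer than the one for $\underline m$.

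There is no real obstacle here. The only point needing care is the consistency of the identifications, namely that the $GL(n,\R)$-action on $G/P_{\underline n}$ corresponds to $g \cdot (V_i) = (gV_i)$. This holds because the flag of $hP_{\underline n}$ is $(h\,\Span(e_1,\dots,e_{n_1+\cdots+n_i}))_i$.
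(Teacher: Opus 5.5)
Your proof is correct. Your second (coset) argument is exactly the paper's proof, which computes $\Phi_{\underline n \to \underline m}(gAP_{\underline n}) = gAP_{\underline m} = g\cdot\Phi_{\underline n \to \underline m}(AP_{\underline n})$. Your first argument checks the same identity directly on subspaces, which has the small advantage of not depending on the Remark that identifies $\Phi_{\underline n \to \underline m}$ with the canonical map $G/P_{\underline n} \to G/P_{\underline m}$.
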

\begin{proof}
	Consider the identifications $\mathcal \D_{\underline n} \simeq GL(n,\R)/P_{\underline n}$ and $\mathcal \D_{\underline m} \simeq GL(n,\R)/P_{\underline m}$. If $g,A \in GL(n,\R)$, one has
	\begin{equation*}
		\Phi_{\underline n \to \underline m}(g \cdot AP_{\underline n})  = \Phi_{\underline n \to \underline m}(gAP_{\underline n}) = gAP_{\underline m} = g \cdot AP_{\underline m} = g \cdot \Phi_{\underline n \to \underline m}(AP_{\underline n}).
	\end{equation*}
\end{proof}

\subsection{Ranks of flags}

\begin{Definition}{\label{def rang}}
	Let $\underline n = (n_1,\dots,n_k)$ be a composition of $n$, $J \subset \{1,\dots,n\}$, $s \in \{1,\dots,k-1\}$ and $\D_{\underline n}$ be the flag variety associated to $\underline n$ (see \Cref{flag varieties}). The map $\rank_{J,s} : \D_{\underline n} \to \N_{\geqslant 0}$ is defined by
	\begin{equation*}
		\forall D \in \D_{\underline n}, \hspace{3mm} \rank_{J,s}(D) = \rank\Big((a_{ij})_{(i,j) \in J \times \{1,\dots,n_1+\cdots+n_s\}}\Big)
	\end{equation*}
	where $(a_{ij}) \in M_{n,n_1+\cdots+n_s}(\R)_{\mathrm{reg}}$ is a representative for $D_s \in \D_{(n_1+\cdots+n_s,n-n_1+\cdots+n_s)}$.
\end{Definition}

\begin{Remark}
	Note that these maps are well defined since $\rank_{J,s} (D)$ coincides with $\dim p_J(D_s)$ where $p_J : \R^n \to \R^{n}$ is the projection onto $\Span(e_j, j \in J) \subset \R^n$.
\end{Remark}

\begin{Remark}
	These maps are lower semi-continuous because the usual rank of matrices is.
	In particular, if $X \subset \D_{\underline n}$ is a set of matrices satisfying $\rank_{J,s} D = r$ for all $D \in X$, then any $D \in \overline X$ must satisfy $\rank_{J,s} D \leqslant r$.
\end{Remark}

\begin{Remark}
	If $k = 2$, we shall write $\rank_{J}$ instead of $\rank_{J,1}$.
\end{Remark}

\begin{Example}
	Let $(e_1,e_2,e_3,e_4)$ be the canonical basis of $\R^4$. Recall from \Cref{exemple flags} that the complete flag
	\begin{equation*}
		D = \Big(\Span(e_1+2e_2) \subset \Span(e_1+2e_2,e_1+e_3) \subset \Span(e_1+2e_2,e_1+e_3,e_4) \Big)
	\end{equation*}
	can be represented as $D = \left[\begin{tabular}{c|c|c}
		$1$ & $1$ & $0$ \\
		$2$ & $0$ & $0$ \\
		$0$ & $1$ & $0$ \\
		$0$ & $0$ & $1$
	\end{tabular}\right]$. Since $D_2 = \begin{bmatrix}
		1 & 1 \\
		2 & 0 \\
		0 & 1 \\
		0 & 0
	\end{bmatrix}$, one has
	\begin{equation*}
		\rank_{\{1,4\},2}(D) = \rank\begin{pmatrix}
		1 & 1 \\
		0 & 0
	\end{pmatrix} = 1, \hspace{1cm} \rank_{\{2,3\},2}(D) = \rank\begin{pmatrix}
		2 & 0 \\
		0 & 1
	\end{pmatrix} = 2.
	\end{equation*}
\end{Example}

\subsection[Invariant ranks]{$(B' \times P_{\underline m})$-invariant ranks}{\label{subsection invariant}}

Let $\underline n = (n_1,\dots,n_k)$ be a composition of $n$, $J \subset \{1,\dots,n\}$, $s \in \{1,\dots,k-1\}$ and consider the map $\rank_{J,s} : \D_{\underline n} \to \N_{\geqslant 0}$. The subgroup
\begin{equation*}
	H = \{g \in GL(n,\R) \mid \forall D \in \D_{\underline n}, \, \rank_{J,s}(g \cdot D) = \rank_{J,s}(D)\}
\end{equation*}
admits then an alternative description:
\begin{equation*}
	H = \{(a_{ij}) \in GL(n,\R) \mid \forall i \notin J, \, \forall j \in J, \, a_{ij} = 0\}.
\end{equation*}
Note that the standard parabolic subgroup $P_{(n-|J|,|J|)}$ is conjugate to $H$: 
\begin{equation*}
	P_{(n-|J|,|J|)} = M_{\sigma}^{-1}HM_\sigma
\end{equation*}
where $M_\sigma$ is the matrix of any permutation $\sigma$ satisfying
\begin{equation*}
	\sigma(J) = \{n-|J|+1,n-|J|+2,\dots,n\}.
\end{equation*}
In particular, $H$ is a (non-standard) maximal parabolic subgroup of $G$ which only depends on $J$. We will denote it by $P_J$.

\begin{Lemma}{\label{lemme_rank}}
	Let $\underline m = (m_1,m_2)$ be a composition of $n$. The $P_J$-orbits on $\D_{\underline m}$ are parametrized by $\rank_{J} \equiv \rank_{J,1}$.
\end{Lemma}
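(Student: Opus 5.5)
The plan is to reduce to the standard case by a permutation and then classify orbits by linear algebra. Fix a permutation $\sigma$ with $\sigma(J)=\{n-|J|+1,\dots,n\}$, so that $M_\sigma^{-1}P_JM_\sigma = P_{(n-|J|,|J|)}$. Left multiplication by $M_\sigma^{-1}$ maps $P_J$-orbits on $\D_{\underline m}$ bijectively onto $P_{(n-|J|,|J|)}$-orbits. Moreover, $\rank_J(D)=\dim p_J(D_1)$, and $p_J$ is conjugated by $M_\sigma^{-1}$ into the projection onto the last $|J|$ coordinates. So $\rank_J(D)=\rank_{J'}(M_\sigma^{-1}D)$ with $J'=\{n-|J|+1,\dots,n\}$, and it suffices to treat this standard case.

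Set $p=n-|J|$ and $q=|J|$. A point of $\D_{\underline m}$ is an $m_1$-dimensional subspace $V\subset\R^n=\R^p\oplus\R^q$. The group $P=P_{(p,q)}$ is the stabilizer of $E=\Span(e_1,\dots,e_p)$, and $\rank_{J'}(V)=\dim \pi(V)$, where $\pi:\R^n\to\R^n/E\simeq\R^q$ is the quotient map. Invariance of $\rank_{J'}$ under $P$ is already noted in the paper, since $P$ preserves $E$. It remains to show that two subspaces with the same value $r=\dim\pi(V)$ lie in one $P$-orbit. Equivalently, we show that $\dim(V\cap E)=m_1-r$ is the only invariant.

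For this, we choose a normal form. Pick a basis $v_1,\dots,v_{m_1-r}$ of $V\cap E$ and complete it by vectors $w_1,\dots,w_r\in V$ whose images $\pi(w_i)$ are linearly independent. We then show that some $g\in P$ sends $V$ to
\[
V_0=\Span(e_1,\dots,e_{m_1-r},\,e_{p+1},\dots,e_{p+r}).
\]
Write $w_i=u_i+x_i$ with $u_i\in\R^p$ and $x_i\in\R^q$, where the $x_i$ are linearly independent.

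\begin{enumerate}
\item Using the $GL(q)$ block of $P$, send $x_1,\dots,x_r$ to $e_{p+1},\dots,e_{p+r}$.
\item Using the $GL(p)$ block, send $v_1,\dots,v_{m_1-r}$ to $e_1,\dots,e_{m_1-r}$.
\item Using the unipotent radical $\begin{pmatrix}1&y\\0&1\end{pmatrix}$, which maps $u+x$ to $u+yx+x$, choose $y$ with $y e_{p+i}=-u_i'$ for the transformed $u_i'$. This kills the $\R^p$-components of the $w_i$.
\end{enumerate}

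Step 3 is possible precisely because the $x_i$ are independent, so $y$ can be prescribed freely on them. It also leaves the $v_j\in E$ fixed. The result is $V_0$, which depends only on $r$, completing the proof.

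The only step needing care is the compatibility of the three operations. This is harmless: first do step 1, which rescales the $\R^q$-parts of the $w_i$. Then do step 2, which only changes $\R^p$-components. Finally do step 3, which fixes $E$ pointwise.
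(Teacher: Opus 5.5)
Your proof is correct and follows essentially the same route as the paper. You reduce to the standard maximal parabolic $P_{(n-|J|,|J|)}$, then use its two Levi blocks and its unipotent radical to bring any $m_1$-plane with $\rank_{J'}=r$ to the single normal form $\Span(e_1,\dots,e_{m_1-r},e_{p+1},\dots,e_{p+r})$, and combine this with invariance of the rank. The differences are cosmetic: you work with subspaces and a basis adapted to $V\cap E$ rather than matrix representatives modulo $GL(m_1,\R)$, and you apply the $GL(p)$-block before the unipotent radical rather than after.
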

\begin{proof}
	It suffices to prove the statement for the standard parabolic subgroup $P_{(|J|,n-|J|)}$. To this end, we let $D \in \D_{\underline m}$ be given as
	\begin{equation*}
		D = \begin{bmatrix}
		A_1 & \cdots & A_{m_1} \\
		B_1 & \cdots & B_{m_1}
	\end{bmatrix}
	\end{equation*}
	with $A_1,\dots,A_{m_1} \in \R^{n-|J|}$ and $B_1,\dots,B_{m_1} \in \R^{|J|}$. Using the lower right block of the Levi part of $P_{(|J|,n-|J|)}$ and a change of basis allowed by the right action of $GL(m_1,\R)$ on $M_{n,m_1}(\R)_{\mathrm{reg}}$, we find that $D$ is in the same orbit as the element
	\begin{equation*}
		\begin{bmatrix}
		A_1' & \cdots & A_r' & A_{r+1}' & \cdots & A_{m_1}' \\
		f_1 & \cdots & f_r & 0 & \cdots & 0
	\end{bmatrix}
	\end{equation*}
	where $r = \rank_{\{n-|J|+1,\dots,n\}}D$ and $A_1',\cdots, A_{m_1}' \in \R^{n-|J|}$. Using the action of the unipotent radical of $P_{(|J|,n-|J|)}$, we then find that $D$ is in the same orbit as the element
	\begin{equation*}
		\begin{bmatrix}
		0 & \cdots & 0 & A_{r+1}' & \cdots & A_{m_1}' \\
		f_1 & \cdots & f_r & 0 & \cdots & 0
	\end{bmatrix}.
	\end{equation*}
	Since the above matrix has rank $m_1$, the family $(A_{r+1}',\dots,A_{m_1}')$ is linearly independent and we use the upper left block of the Levi part of $P_{(|J|,n-|J|)}$ to find that
	$D$ belong to the same orbit as the element
	\begin{equation*}
		\begin{bmatrix}
		0 & \cdots & 0 & e_1 & \cdots & e_{m_1-r} \\
		f_1 & \cdots & f_r & 0 & \cdots & 0
	\end{bmatrix}.
	\end{equation*}
	Since $\rank_{\{n-|J|+1,\dots,n\}}$ is $P_{(|J|,n-|J|)}$-invariant, this proves the lemma.
\end{proof}

\section{Main results}

Let $\underline n, \underline m$ be compositions of $n \in \N_{>0}$, $G = GL(n,\R)$, $G' = GL(n_1,\R) \times \cdots \times GL(n_k,\R) \subset G$, $P_{\underline m}$ be a standard parabolic subgroup of $G$ and $B'$ be the standard Borel subgroup of $G'$.

According to \cite[Theorem A]{ActionVisible}, the pairs $(G',P)$ for which $B'$ has finitely many orbits on $G/P$ are precisely the pairs $(GL(n_1,\R)\times \cdots \times GL(n_k,\R), P_{\underline m})$ satisfying one of the following conditions:
\begin{equation}\label{eqn:kobcon}
	\begin{array}{rcccc}
		0) & k = 2, & \text{any } N, & l=2, & \text{any } M,\\
		\mathrm{I}) & k = 3, & N = 1, & l = 2, & M \geqslant 2,\\
		\mathrm{II}) & k = 3, & N \geqslant 2, & l = 2, & M = 2, \\
		\mathrm{III}) & \text{any } k, & \text{any } N, & l = 2, & M = 1, \\
		\mathrm{I}') & k = 2, & N \geqslant 2, & l = 3, & M = 1,\\
		\mathrm{II}') & k = 2, & N = 2, & l = 3, & M \geqslant 2,\\
		\mathrm{III}') & k = 2, & N = 1, & \text{any } l, & \text{any } M, \\
	\end{array}
\end{equation}
where $N = \min(\underline n)$ and $M = \min(\underline m)$. We will refer to them as Kobayashi's conditions \eqref{eqn:kobcon}.

\begin{Remark}
	The subcase of $\mathrm{III}')$ with $P$ minimal parabolic, has already been studied extensively by P. Magyar, T. Hashimoto and M. Colarusso-S. Evens (see \cite{Magyar}, \cite{Hashimoto}, \cite{ColarussoEvens}).
\end{Remark}

\subsection{A Bruhat-type embedding}

\begin{Definition}{\label{def Q_H}}
	If $H$ is a subgroup of $G$, we let $\mathcal Q_H$ be the set of all (possibly non-standard) maximal parabolic subgroups of $G$ containing $H$, namely
	\begin{equation*}
		\mathcal Q_H = \{M_\tau P_{(p,q)}M_\tau^{-1} \mid p+q = n, \tau \in S_n, P_{(p,q)}^\tau \subset H\},
	\end{equation*}
	where we have used the notations $M_\tau$ for the matrix corresponding to a permutation $\tau \in \mathfrak S_n$.
\end{Definition}

Our first result classifies Kobayashi's conditions for which $B'$-orbits on $G/P_{\underline m}$ can be computed using rank functions.

% \begin{theoremx}\label{Theorem A}
% 	Assume that the pair $(G',P)$ satisfies one of Kobayashi's conditions \eqref{eqn:kobcon} {\color{red}A FAIRE}

% 	and let $\mathcal Q_{B'}$ and $\mathcal H$ be either $\{P\}$ or $\mathcal Q_P$.
	
% 	\begin{enumerate}[label = $\roman*)$]
% 		\item The map
% 		$$\Psi : B'\backslash G/P \to \displaystyle \prod_{L \in \mathcal Q_{B'}, H \in \mathcal Q_{P}} L\backslash G/H$$
% 		is injective.
		
% 		\item The image of $\Psi$ is the set of all $$(X_{L,H})_{(L,H) \in \mathcal Q_{B'} \times \mathcal Q_{P}} \in \displaystyle \prod_{L \in \mathcal Q_{B'}, H \in \mathcal Q_P} L\backslash G/H $$
% 		such that
% 		$$\bigcap_{L \in \mathcal Q_{B'}, H \in \mathcal Q_{P}} X_{L,H} \neq \emptyset$$
% 		and $\Psi^{-1}$ is given by
% 		$$(X_{L,H})_{(L,H) \in \mathcal \mathcal Q_{B'} \times \mathcal Q_{P}} \mapsto \bigcap_{L \in \mathcal Q_{B'}, H \in \mathcal Q_{P}} X_{L,H}.$$
% 	\end{enumerate}
% \end{theoremx}

\begin{theoremx}\label{Theorem A}
	Assume that $B'\backslash G/P_{\underline m}$ is finite and consider the map
	\begin{equation*}
		\Psi : \begin{array}{ccc}
		B' \backslash G/P_{\underline m} & \to & \displaystyle \prod_{Q_1 \in \mathcal Q_{B'}, Q_2 \in \mathcal Q_{P_{\underline m}}} Q_1 \backslash G/Q_2 \\
		B'gP_{\underline m} & \mapsto & (Q_1gQ_2)_{\substack{Q_1 \in \mathcal Q_{B'} \\ Q_2 \in \mathcal Q_{P_{\underline m}}}}
	\end{array}.
	\end{equation*}
	\begin{enumerate}[label = $\roman*)$]
		\item If Kobayashi's condition $\mathrm{I}')$ holds with $m_2 \neq 1$, $n \geqslant 5$ and either $m_1 = 1$ or $m_3 = 1$, or if Kobayashi's condition $\mathrm{II}')$ holds, then the map $\Psi$ is not injective.
		\item In all remaining cases, $\Psi$ is injective.
		\item When $\Psi$ is injective, its image is the set of all
		\begin{equation*}
			(X_{L,H})_{\substack{L \in \mathcal Q_{B'} \\ H \in \mathcal Q_{P_{\underline m}}}} \in \displaystyle \prod_{\substack{L \in \mathcal Q_{B'} \\ H \in \mathcal Q_{P_{\underline m}}}} L\backslash G/H
		\end{equation*}
		such that
		\begin{equation*}
			\bigcap_{\substack{L \in \mathcal Q_{B'} \\ H \in \mathcal Q_{P_{\underline m}}}} X_{L,H} \neq \emptyset
		\end{equation*}
		and $\Psi^{-1}$ is given by
		\begin{equation*}
			(X_{L,H})_{\substack{L \in \mathcal Q_{B'} \\ H \in \mathcal Q_{P_{\underline m}}}} \mapsto \bigcap_{\substack{L \in \mathcal Q_{B'} \\ H \in \mathcal Q_{P_{\underline m}}}} X_{L,H}.
		\end{equation*}
	\end{enumerate}
\end{theoremx}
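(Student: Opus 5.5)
Part iii) is formal once ii) is known, and I would treat it first. Each $Q_1 \in \mathcal Q_{B'}$ contains $B'$ and each $Q_2\in\mathcal Q_{P_{\underline m}}$ contains $P_{\underline m}$. Hence every double coset $B'gP_{\underline m}$ is contained in $\bigcap Q_1gQ_2$. Conversely, suppose $g$ and $h$ both lie in a nonempty intersection $\bigcap X_{L,H}$. Then $\Psi(B'gP_{\underline m})=\Psi(B'hP_{\underline m})=(X_{L,H})$, so injectivity gives $B'gP_{\underline m}=B'hP_{\underline m}$. The intersection is therefore exactly one $B'$-double coset, which describes both the image and $\Psi^{-1}$.

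For parts i) and ii) I would pass to the rank formulation. By \Cref{lemme_rank} and the description $P_J=\{a_{ij}=0,\ i\notin J,\ j\in J\}$, the maximal parabolics containing $B'$ are the $P_J$ with $J$ stable under $B'$, i.e.\ $J$ is a union of final segments of the blocks of $\underline n$. Those containing $P_{\underline m}$ correspond to the projections $D\mapsto D_s$. So $\Psi(D)$ amounts to the data $(\rank_{J,s}(D))$ over such $J$ and $s$, and injectivity means these ranks determine the $B'$-orbit.

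For ii) I would proceed case by case through Kobayashi's conditions. In each case I would take $D\in\D_{\underline m}$ and use the $B'$-action on rows (triangular within each block) together with the column operations allowed by $P_{\underline m}$. The aim is a Gaussian-elimination-type reduction that brings $D$ to a normal form: a partial permutation matrix in each block row-strip, with positions read off from the ranks $\rank_{J,s}$. In cases 0), III) and III$'$) the reduction is essentially Bruhat on each block. For instance, when $N=1$ one block is a single row, so $B'$ acts by $B\times GL_1$ and one reduces to Bruhat-type decompositions with an extra line. In cases I), II) and I$'$) with the remaining parameters, the small value of $M$ or $N$ keeps the interaction between blocks of low rank. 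I would then show that the normal form is recoverable from ranks of the form $\rank_{J_1\cup J_2,s}$ mixing final segments of two blocks. Ranks involving a single block give the block-wise Bruhat cell, and the mixed ranks record how the chosen pivots in different blocks are linearly related.

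For i) I would exhibit explicit pairs of flags with equal ranks but different orbits. In case II$'$), take $\underline n=(2,n-2)$ or $(n-2,2)$ and $l=3$. Two flags can differ by a cross-ratio-type relation inside the 2-dimensional block, and this can be compatible with all rank data while not being removable by $B'$. I would construct a one-parameter family in a small $n$, check that it splits into two orbits (e.g.\ distinguished by a sign or a zero/nonzero pattern invisible to ranks), and embed it into larger $n$. I would do the same for I$'$) with $m_1=1$, $m_2\neq1$, $n\ge5$; the case $m_3=1$ follows by the duality $g\mapsto w_0{}^tg^{-1}w_0$. The main obstacle is the exhaustive case analysis in ii), especially case II) and the borderline cases of I$'$). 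There one must make sure the normal form has no residual continuous or discrete invariant beyond the ranks, so that finiteness of the orbits and rank-determinacy are genuinely separated exactly along the exceptions of i).
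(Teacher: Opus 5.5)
\textbf{Where your plan matches the paper.} Your part iii) is the paper's argument. Your plan for i) and ii) also follows the paper's route: Gauss--Jordan normal forms separated by the invariant ranks, explicit pairs of flags for the exceptions, and a duality for $m_3=1$. Your duality $g\mapsto w_0\,{}^tg^{-1}w_0$ is cleaner than the paper's $\Gamma$. It carries $(B'_{(n_1,n_2)},P_{(m_1,m_2,m_3)})$ to $(B'_{(n_2,n_1)},P_{(m_3,m_2,m_1)})$ together with all the maximal parabolics involved, so injectivity of $\Psi$ transfers verbatim.

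\textbf{The gap in i).} You produce no witness, and the cross-ratio heuristic points the wrong way. The orbit set is finite, so there is no modulus; what separates orbits is a single zero/nonzero entry. The missing idea is a reduction. Every $Q_2\in\mathcal Q_{P_{\underline m}}$ is some $P_{(m_1+\cdots+m_s,\,n-m_1-\cdots-m_s)}$, and $B'\subset Q_1$. Hence $\Psi(D)$ depends only on the $B'$-orbits of the individual subspaces $D_s$. You therefore need two flags whose subspaces are separately, but not simultaneously, $B'$-equivalent. To rule out simultaneous equivalence, let the stabilizer of the smallest subspace act on the next one.

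Here is the paper's pair for $\underline n=(3,2)$, $\underline m=(1,2,2)$, with coordinates $x_i,y_j$ dual to $e_i,f_j$. Take $L=\langle e_3+f_2\rangle$ inside $W_1=\langle e_3+f_2,e_1+f_2,e_2+f_1\rangle$, resp.\ $W_2=\langle e_3+f_2,e_1+f_1+f_2,e_2+f_1\rangle=\{y_1=x_1+x_2,\ y_2=x_1+x_3\}$. The element of $B'$ with $e_3\mapsto e_2+e_3$, $f_2\mapsto f_1+f_2$ (fixing the other basis vectors) maps $W_1$ onto $W_2$. On the other hand, an element of $B'$ fixing $L$ must have $f_2$ as an eigenvector, as $e_1$ always is. It therefore sends $e_1+f_2\in W_1$ to some $\alpha e_1+\iota f_2\notin W_2$.

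You and the paper both only assert the passage to larger $n$. It works as follows. Insert a new basis vector $e_0$ at the top of a block of $\underline n$, and adjoin it to $D_s$ for $s\geqslant s_0$. The line $\langle e_0\rangle$ is stable under the new Borel, so a simultaneous equivalence upstairs descends to $\R^{n+1}/\langle e_0\rangle$. Separate equivalences lift trivially.

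\textbf{A step that would fail in ii).} The failure is at the point you yourself flag, the borderline of $\mathrm{I}')$. The only configuration of $\mathrm{I}')$ with $m_2\neq 1$ excluded by $n\geqslant 5$ is $\underline n=(2,2)$, $\underline m=(1,2,1)$, and there $\Psi$ is not injective. Take $L=\langle e_2+f_2\rangle$, $W_0=\ker(x_1+y_1)$ and $W_1=\ker(x_1+x_2+y_1-y_2)$; both hyperplanes contain $L$. We have $W_1=gW_0$ for the element $g\in B'$ with $g^{-1}=\bigl(\begin{smallmatrix}1&1\\0&1\end{smallmatrix}\bigr)\oplus\bigl(\begin{smallmatrix}1&-1\\0&1\end{smallmatrix}\bigr)$. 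So the flags $L\subset W_0$ and $L\subset W_1$ have the same image under $\Psi$.

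However, the stabilizer of $L$ in $B'$ is $\{\mathrm{diag}(\alpha,\delta,\eta,\delta)\}$. It maps $W_0$ to $\ker(\alpha^{-1}x_1+\eta^{-1}y_1)\neq W_1$. The two orbits even have different dimensions, $4$ and $5$. So no normal-form analysis can establish ii) in this case. The paper misses this because its proof of ii) cites only cases $0)$, $\mathrm{III}')$, $\mathrm{I})$, $\mathrm{II})$, $\mathrm{III})$ and never treats this configuration.

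Start from this example and apply the embedding above, growing $n_1$ or $n_2$ together with $m_2$ or $m_3$; then apply your duality. This shows that $\Psi$ is non-injective throughout $\mathrm{I}')$ with $m_2\neq 1$. The hypothesis $n\geqslant 5$ in i) should therefore be dropped.
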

\begin{proof} $ $
	\begin{enumerate}[label = $\roman*)$]
		\item See \Cref{cas 1'} and \Cref{cas 2'}.

		\item See \Cref{cas 0}, \Cref{cas 3'}, \Cref{cas 1}, \Cref{cas 2} and \Cref{cas 3}.

		\item First note that according to $i)$, if $g,x \in G$, one has
		\begin{align*}
			x \in B'gP_{\underline m} &\iff B'xP_{\underline m} = B'gP_{\underline m}\\
			&\iff \Psi(B'xP_{\underline m}) = \Psi(B'gP_{\underline m}) \\
			&\iff \forall L \in \mathcal Q_{B'}, \forall H \in \mathcal Q_{P_{\underline m}}, \hspace{3mm} LxH = LgH \\
			&\iff \forall L \in \mathcal Q_{B'}, \forall H \in \mathcal Q_{P_{\underline m}}, \hspace{3mm} x \in LgH
		\end{align*}
		so that
		\begin{equation*}
			B'gP_{\underline m} = \bigcap_{\substack{L \in \mathcal Q_{B'} \\ H \in \mathcal Q_{P_{\underline m}}}}LgH.
		\end{equation*}
		Thus, it sufficies to prove the assertion about the image of $\Psi$.
		
		Let $S$ be the set of all
		\begin{equation*}
			(X_{L,H})_{\substack{L \in \mathcal Q_{B'} \\ H \in \mathcal Q_{P_{\underline m}}}} \in \displaystyle \prod_{L \in \mathcal Q_{B'}, H \in \mathcal Q_P} L\backslash G/H 
		\end{equation*}
		such that
		\begin{equation*}
			\bigcap_{\substack{L \in \mathcal Q_{B'} \\ H \in \mathcal Q_{P_{\underline m}}}} X_{L,H} \neq \emptyset.
		\end{equation*}
		If $B'gP_{\underline m} \in B'\backslash G/P_{\underline m}$ then $g \in LgH$ for all $L \in \mathcal Q_{B'}$, $H \in \mathcal Q_{P_{\underline m}}$ so that $\Psi(B'gP_{\underline m}) \in S$.
		Thus, we have proven that $\Psi(B'\backslash G/P_{\underline m}) \subset S$.

		Let $(X_{L,H})_{L,H} \in S$. If $g \in \cap_{L,H} X_{L,H}$, one has
		\begin{equation*}
			\forall L \in \mathcal Q_{B'}, \forall H \in \mathcal Q_{P_{\underline m}}, \hspace{3mm} X_{L,H} = LgH.
		\end{equation*}
		Let $(f_{L,H}: G/H \to \N_{\geqslant 0})_{L \in \mathcal Q_{B'},H \in \mathcal Q_{P_{\underline m}}}$ be the set of rank functions which are $B'$-invariant (see \Cref{subsection invariant}). From \Cref{lemme_rank}, we get
		\begin{equation*}
			\forall L \in \mathcal Q_{B'}, \forall H \in \mathcal Q_{P_{\underline m}}, \hspace{3mm} f_{L,H}^{-1}(\{f_{L,H}(gH)\}) = LgH.
		\end{equation*}
		Moreover, according to both $i)$ and \Cref{lemme_rank}, one has
		\begin{equation*}
			B'xP_{\underline m} = \bigcap_{\substack{L \in \mathcal Q_{B'} \\ H \in \mathcal Q_{P_{\underline m}}}} f_{L,H}^{-1}(\{f_{L,H}(gH)\}) = \bigcap_{\substack{L \in \mathcal Q_{B'} \\ H \in \mathcal Q_{P_{\underline m}}}} LgH.
		\end{equation*}
		In particular, $\bigcap_{L,H} LgH$ is a $B'$-orbit of $G/P_{\underline m}$ and it belongs to $B'\backslash G/P_{\underline m}$.
		Since 
		\begin{equation*}
			\Psi\Big(\bigcap_{\substack{L \in \mathcal Q_{B'} \\ H \in \mathcal Q_{P_{\underline m}}}} LgH\Big) = (X_{L,H})_{L \in \mathcal Q_{B'},H \in \mathcal Q_{P_{\underline m}}} \in \Psi(B'\backslash G/ P_{\underline m}),
		\end{equation*}
		we conclude that $S \subset \Psi(B'\backslash G/ P_{\underline m})$ so that $S = \Psi(B'\backslash G/ P_{\underline m})$.
	\end{enumerate}

\end{proof}

% \begin{Remark}
% 	Note that the codomain of $\Psi$ is finite as it is a finite product of finite sets, according to the Bruhat decomposition.
% 	Thus, if $(G',P)$ does not satisfy any condition \eqref{eqn:kobcon}, the map $\Phi$ cannot be injective as $B'\backslash G/P$ is infinite.
% \end{Remark}

\begin{Remark}
	When $k = 2$, $\mathcal Q_{B'}$ consists of the $2n-2$ maximal parabolic subgroups of the following form:
	
	\vspace{1.5cm}
	
	\begin{center}
		\begin{tikzpicture}[transform canvas={scale=0.5},
			%Global config
			>=latex,
			line width=1pt,
			%Styles
			Brace/.style={
				decorate,
				decoration={
					brace,
					raise=-7pt
				}
			}
			]
			
			\matrix[% General option for all nodes
			matrix of nodes,
			text height=2.5ex,
			text depth=0.75ex,
			text width=3.25ex,
			align=center,
			left delimiter=(,
			right delimiter= ),
			column sep=5pt,
			row sep=5pt,
			%nodes={draw=black!10}, % Uncoment to see the square nodes.
			nodes in empty cells,
			] at (0,0) (M){ % Matrix contents  
				& & & & & \\
				& & & & & \\
				& & & & & \\
				& & & & & \\
				& & & & & \\
				& & & & & \\
			};
			% Drawing the sectors using matrix coordinate names.
			\draw[thick,draw] (M-1-1.north west)
			-- (M-1-6.north east)
			-- (M-2-6.east)
			-- (M-2-3.center)
			-- (M-3-3.center)
			-- (M-3-6.east)
			-- (M-6-6.south east)
			-- (M-6-1.south west)
			-- (M-3-1.west)
			-- (M-3-2.center)
			-- (M-2-2.center)
			-- (M-2-1.west)
			-- cycle;
			%			\draw[thick,draw] (M-1-1.north west)
			%			-- (M-1-3.north)
			%			-- (M-3-3.center)
			%			-- (M-3-2.center)
			%			-- (M-2-2.center)
			%			-- (M-2-1.west)
			%			-- cycle;
			%			\draw[thick,draw] (M-3-3.center)
			%			-- (M-3-6.east)
			%			-- (M-6-6.south east)
			%			-- cycle;
			%			\draw[thick,draw] (M-3-1.west)
			%			-- (M-3-3.center)
			%			-- (M-6-3.south)
			%			-- (M-6-1.south west)
			%			-- cycle;
			% Drawing the braces.   
			\draw[Brace] (M-1-1.north)
			-- (M-3-3.north)
			node[midway,above]{\LARGE $n_1$};
			
			\draw[Brace] (M-3-3.east)
			-- (M-6-6.east)
			node[midway,above]{\LARGE $n_2$};
		\end{tikzpicture} \hspace{4cm}
		\begin{tikzpicture}[transform canvas={scale=0.5},
			%Global config
			>=latex,
			line width=1pt,
			%Styles
			Brace/.style={
				decorate,
				decoration={
					brace,
					raise=-7pt
				}
			}
			]
			
			\matrix[% General option for all nodes
			matrix of nodes,
			text height=2.5ex,
			text depth=0.75ex,
			text width=3.25ex,
			align=center,
			left delimiter=(,
			right delimiter= ),
			column sep=5pt,
			row sep=5pt,
			%nodes={draw=black!10}, % Uncoment to see the square nodes.
			nodes in empty cells,
			] at (0,0) (M){ % Matrix contents  
				& & & & & \\
				& & & & & \\
				& & & & & \\
				& & & & & \\
				& & & & & \\
				& & & & & \\
			};
			% Drawing the sectors using matrix coordinate names.
			\draw[thick,draw] (M-1-1.north west)
			-- (M-1-3.north)
			-- (M-3-3.center)
			-- (M-3-5.center)
			-- (M-1-5.north)
			-- (M-1-6.north east)
			-- (M-6-6.south east)
			-- (M-6-5.south)
			-- (M-5-5.center)
			-- (M-5-3.center)
			-- (M-6-3.south)
			-- (M-6-1.south west)
			-- cycle;
			
			\draw[Brace] (M-1-1.north)
			-- (M-3-3.north)
			node[midway,above]{\LARGE $n_1$};
			
			\draw[Brace] (M-3-3.east)
			-- (M-6-6.east)
			node[midway,above]{\LARGE $n_2$};
		\end{tikzpicture} \hspace{4cm}
		\begin{tikzpicture}[transform canvas={scale=0.5},
			%Global config
			>=latex,
			line width=1pt,
			%Styles
			Brace/.style={
				decorate,
				decoration={
					brace,
					raise=-7pt
				}
			}
			]
			
			\matrix[% General option for all nodes
			matrix of nodes,
			text height=2.5ex,
			text depth=0.75ex,
			text width=3.25ex,
			align=center,
			left delimiter=(,
			right delimiter= ),
			column sep=5pt,
			row sep=5pt,
			%nodes={draw=black!10}, % Uncoment to see the square nodes.
			nodes in empty cells,
			] at (0,0) (M){ % Matrix contents  
				& & & & & \\
				& & & & & \\
				& & & & & \\
				& & & & & \\
				& & & & & \\
				& & & & & \\
			};
			% Drawing the sectors using matrix coordinate names.    
			\draw[thick,draw] (M-1-1.north west)
			-- (M-1-6.north east)
			-- (M-6-6.south east)
			-- (M-6-4.south)
			-- (M-4-4.center)
			-- (M-4-1.west)
			-- cycle;
			
%			\draw[Brace] (M-1-1.north)
%			-- (M-3-3.north)
%			node[midway,above]{$n_1$};
%			
%			\draw[Brace] (M-3-3.east)
%			-- (M-6-6.east)
%			node[midway,above]{$n_2$};
			\node at (3.5,0) {\LARGE .};
		\end{tikzpicture}
	\end{center}
	
	\vspace{1.5cm}
\end{Remark}

% \begin{Remark}
% 	When $\mathcal L = \{B'\}$ and $\mathcal H = \mathcal Q_{B'}$, \Cref{Theorem A} states that one may obtain the $B'$-orbits on $G/P$ by considering $B'$-orbits on simpler manifolds, namely Grassmannian manifolds.
% \end{Remark}
\begin{Remark}
	Let $\underline m = (m_1,\dots,m_l)$ be a partition of $n$, $B$ be the standard Borel subgroup of $GL(n,\R)$ and $B^*$ be the (non-standard) Borel subgroup of $GL(n,\R)$ defined as the isotropy subgroup of the full flag
	\begin{equation*}
		\Span(e_n) \subset \Span(e_n,e_1) \subset \cdots \subset \Span(e_n,e_1,\dots,e_k) \subset \cdots \subset \Span(e_n,e_1,\dots,e_{n-1}) = \R^n.
	\end{equation*}
	According to the Bruhat decomposition (see \Cref{Fact Bruhat}), the $B$-orbits on $GL(n,\R)/P_{\underline m}$ are determined by the functions $\rank_{J,s} : GL(n,\R)/P_{\underline m} \to \N_{\geqslant 0}$ for
	\begin{equation*}
		J \in \mathcal J \coloneq \Big\{\{n-i+1,n-i+2,\dots,n\} \mid 1\leqslant i \leqslant n\Big\}, \hspace{3mm} 1\leqslant s \leqslant l
	\end{equation*}
	% $$(\rank_{\{n-i+1,\dots,n\},j})_{1 \leqslant i,j \leqslant n}$$
	while the $B^*$-orbits on $G/B$ are determined by the functions $\rank_{J,s}$ for
	% $$(\rank_{\{n-i+1,\dots,n-1\},j}), \hspace{3mm} (i\rank_{\{1,\dots,n\},j}).$$
	\begin{equation*}
		J \in \mathcal J^* \coloneq \Big\{\{n-i+1,n-i+2,\dots,n-1\} \mid 1\leqslant i \leqslant n-1\Big\} \cup \Big\{\{1,\dots,n\}\Big\}, \hspace{3mm} 1\leqslant s \leqslant l.
	\end{equation*}
	Since $\rank_{J,s} : GL(n,\R)/P_{\underline m} \to \N_{\geqslant 0}$ is $B'$-invariant if and only if $J \in \mathcal J \cup \mathcal J^*$, we see that the map
	\begin{equation*}
		\begin{array}{ccc}
		B' \backslash GL(n,\R)/P_{\underline m} & \to & B\backslash GL(n,\R) /P_{\underline m} \times B^*\backslash GL(n,\R) /P_{\underline m} \\
		B'gB & \mapsto & (BgB,B^*gB)
	\end{array}
	\end{equation*}
	is injective. Using the same argument as in the proof of \Cref{Theorem A}, we find that the $B'$-orbits on $GL(n,\R)/P_{\underline m}$ are precisely the non-empty intersection of $B$-orbits with $B^*$-orbits.
	In particular, the \Cref{Theorem A} recovers results of \cite[Corollary 2.5]{ColarussoEvens} and provides their generalization for any flag manifold.
\end{Remark}

When $G' = GL(n-1,\R) \times GL(1,\R)$ and $P_{\underline m} = B$ the Borel subgroup of $GL(n,\R)$, it is known from \cite{ColarussoEvens} that $\Psi$ is in fact a poset isomorphism onto its image when its range is endowed with the direct product order.
In the general case, we know from the semi-lower continuity of the rank maps that $\Psi$ is an increasing map.

\begin{Conjecture*}
	Whenever $\Psi$ is injective, it induces a poset isomorphism onto its image.
\end{Conjecture*}

Our proof of \Cref{Theorem A} relies on the explicit computation of the $B'$-orbits on $G/P$. We will state this classification in the setting of case $\mathrm{III}')$ for $(n_1,n_2) = (n-1,1)$ and case $0)$ for general $\underline m$ even though we obtained analogous descriptions for all conditions \eqref{eqn:kobcon} for which $\Psi$ is injective (see \Cref{Proof}).

\subsection{Orbits description: multiplicity-free case}

Kobayashi's condition $\mathrm{III}')$ with $(n_1,n_2) = (n-1,1)$ is particularly interesting from a representation theoretic point of view as $(GL(n,\R),GL(n-1,\R))$ is a multiplicity-free pair (see \cite{zhu}).

\begin{theoremx}\label{Theorem B}
	Let $P_{\underline m}$ be any parabolic subgroup of $GL(n,\R)$ and $B'$ be the standard Borel subgroup of $GL(n-1,\R) \times GL(1,\R)$. We denote by $(e_1,\dots,e_{n-1})$ the canonical basis of $\R^{n-1}$. Then, any $B'$-orbit of $\mathcal D_{\underline{m}} \simeq G/P_{\underline m}$ admits a representative given by
	\begin{equation*}
		A = \begin{bmatrix}
		A_1 & \cdots & A_{l}
	\end{bmatrix}
	\end{equation*}
	where each block $A_j$ is of the form
	\begin{equation}\label{eqn:normalform1}
		\begin{pmatrix}
			e_{i_{j,1}} & \dots & e_{i_{j,m_j}} \\
			0 & \dots & 0
		\end{pmatrix}
	\end{equation}
	except exactly one of them which is of the form
	\begin{equation}\label{eqn:normalform2}
		\begin{pmatrix}
			\displaystyle \sum_{s=1}^k e_{i_s} & e_{i_{j,2}} & \dots & e_{i_{j,m_j}} \\
			1 & 0 & \cdots & 0
		\end{pmatrix}
	\end{equation}
	for some $k \in \{0,\dots,l-1\}$. Moreover,
	\begin{itemize}[leftmargin=*, label = --]
		\item all basis vectors $e_{i_{j,j'}}$ involved are distinct,
		\item if the block $A_{j_0}$ is of the form \eqref{eqn:normalform2}, then there exists an increasing sequence $j_0 < j_1 < \cdots < j_k$ such that $e_{i_s}$ appears in the block $A_{j_s}$ for all $s \in \{1,\dots,k\}$.
	\end{itemize}
	This normal form is unique up to permutations of the $e_{j,j'}$ vectors in each block $A_j$.
\end{theoremx}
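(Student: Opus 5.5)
The plan is to prove existence of the normal form by an explicit reduction, acting on the left with $B' = B_{n-1}\times GL(1,\R)$ and on the right with $P_{\underline m}$. Uniqueness will then come from the $B'$-invariant ranks of \Cref{subsection invariant}.

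We realize a flag as an invertible matrix $A=[A_1|\cdots|A_l]$, keeping the last block $A_l$ so that every row index of $\{1,\dots,n-1\}$ is accounted for. The allowed moves are the following.
\begin{itemize}[leftmargin=*, label = --]
\item Left multiplication by an element of $B_{n-1}$ scales rows and adds a multiple of row $i$ to a row $r<i$, for $i,r\leqslant n-1$.
\item Left multiplication by an element of $GL(1,\R)$ scales the last row.
\item Right multiplication by $P_{\underline m}$ changes bases inside each block and adds columns of a block $A_j$ to columns of a block $A_{j'}$ with $j<j'$.
\end{itemize}

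\emph{Step 1 (the last row).} Let $j_0$ be the first block whose last-row entries are not all zero. Such a block exists by invertibility. A change of basis inside $A_{j_0}$ makes its first column $v$ have last entry $1$ and all its other columns have last entry $0$. Subtracting multiples of $v$ from the columns of the later blocks kills their last-row entries. Earlier blocks already vanish in the last row. So the last row becomes $(0,\dots,0,1,0,\dots,0)$, with the $1$ in the first column of $A_{j_0}$.

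\emph{Step 2 (the other columns).} Delete $v$ and the last row. What remains is a full-rank matrix encoding a partial flag in $\R^{n-1}$. We put it in Bruhat normal form with respect to $B_{n-1}$ by a pivot argument, processing columns block by block. In each column we take the lowest nonzero entry as the pivot. Since row operations only add lower rows to upper rows, the entries above a pivot can be cleared. Column operations from earlier blocks, together with changes of basis inside each block, clear the entries in previously used pivot rows. Rescaling then turns every such column into some $e_i$.

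The row operations used in Step 2 do not touch the last row. They do modify $v$, but we only keep track of its top part. After Step 2, each column other than $v$ is some $e_i$, with distinct indices. Since $A_l$ is included and the matrix is invertible, every index $i\leqslant n-1$ is the pivot of exactly one column, belonging to exactly one block $b(i)$.

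\emph{Step 3 (reducing $v$: the main obstacle).} Entries of $v$ in rows $i$ with $b(i)\leqslant j_0$ are removed by adding multiples of earlier columns, or of columns of the same block, to $v$. Now let $i$ be the lowest remaining support index of $v$, so $b(i)>j_0$. We use the row operation "add a multiple of row $i$ to row $r$" for $r<i$ in the support of $v$. This kills the $r$-entry of $v$ but creates an entry $-c$ at position $r$ in the column $e_i$.

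That new entry can be removed exactly when $b(r)\leqslant b(i)$: if $b(r)<b(i)$, add a multiple of the column $e_r$; if $b(r)=b(i)$, use a change of basis inside the block. Otherwise the entry survives, so it must stay in $v$. Iterating from the bottom, the surviving support $i_1>i_2>\cdots$ can be shown to satisfy $j_0<b(i_1)<b(i_2)<\cdots$, which gives the required increasing chain $j_0<j_1<\cdots<j_k$. In particular $k\leqslant l-1$, because there are only $l-j_0$ blocks after $j_0$.

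Finally we rescale to make all coefficients of $v$ equal to $1$. We use the diagonal torus of $B'$, compensating on the columns $e_{i_s}$ and on the last row by the $GL(1,\R)$ factor. This step is where I expect the difficulty: the bookkeeping of which row operations preserve the already normalized columns needs a careful induction on the support of $v$.

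\emph{Uniqueness.} The normal form determines the following invariants:
\begin{itemize}[leftmargin=*, label = --]
\item $\rank_{J,s}$ for $J\in\mathcal J$ (the tails $\{n-i+1,\dots,n\}$);
\item $\rank_{J,s}$ for $J\in\mathcal J^*$ (the sets $\{n-i+1,\dots,n-1\}$, together with $\{1,\dots,n\}$);
\end{itemize}
for $1\leqslant s\leqslant l$. Conversely, these invariants let us read off, for each block, the set of pivot rows, then $j_0$, then the chain $(i_s)$ with its blocks. Indeed, the block $j_0$ is the first $s$ for which $\rank_{\{n\},s}=1$. Each $i_s$ is detected as a row where $\rank_{J,s}$ for the tail $J$ containing $n$ differs from the corresponding rank computed without the index $n$. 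Since these ranks are $B'$-invariant by \Cref{subsection invariant}, two normal forms in the same orbit coincide, up to permuting the $e_{i_{j,j'}}$ within each block.
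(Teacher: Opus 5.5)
Your argument is correct, and its existence half runs parallel to the paper's. Both normalize the last row inside the first block $A_{j_0}$ that meets it, apply Gauss--Jordan elimination to the matrix with $v$ and the last row deleted, and clear the coordinates of $v$ in rows $i$ with $b(i)\leqslant j_0$. The paper stops there: it takes the whole remaining support of $v$ and rescales. It therefore never shows that the surviving $e_{i_s}$ lie in pairwise distinct blocks after $j_0$, nor that $k\leqslant l-1$.

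Your Step~3 is exactly the missing reduction. The bookkeeping you worry about is harmless: every column other than $v$ is a basis vector, so adding a multiple of row $i$ to row $r<i$ only touches $v$ and the column $e_i$. Working upward from the lowest support row never disturbs rows already treated.

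The ordering you obtain (row indices decrease as the blocks increase) should be written into the normal form. The chain condition as printed does not imply it, and without it uniqueness fails. For $n=3$, $\underline m=(1,1,1)$, take the flag with $v=e_1+e_2$, $e_1\in A_2$, $e_2\in A_3$. Subtracting row $2$ from row $1$ and then adding the column of $A_2$ to that of $A_3$ sends it to the flag with $v=e_2$. Compare the open orbit at the top of \Cref{GL3-GL2}, where $e_2\in A_2$ and $e_1\in A_3$. The labelling $i_1<\cdots<i_k$ in the paper's existence proof, read together with $j_1<\cdots<j_k$, has this order reversed.

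For uniqueness, the paper projects to the Grassmannians $\D_{(m_1+\cdots+m_p,\,n-m_1-\cdots-m_p)}$ through the equivariant maps $\Phi$. It then invokes uniqueness of the normal form of \Cref{Theorem B'}, itself proved with invariant ranks, and reads the chain off from which $e_i$ is paired with the last coordinate after each projection. You read the same data directly from $\rank_{\{r,\dots,n\},t}$ and $\rank_{\{r,\dots,n-1\},t}$. Precisely, for $t\geqslant j_0$ their difference is $0$ for $r\leqslant\max\{i_s : j_s>t\}$ and $1$ above. This step function in $t$ recovers the pairs $(i_s,j_s)$ precisely because of the ordering above.

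Your route is more direct and gives injectivity of $\Psi$ immediately. The paper's route instead reuses the Grassmannian case as a black box.
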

%	
%	
%	where either :
%	\begin{itemize}[leftmargin=*]
%		\item each block $A_j$ is of the form
%		\begin{equation}\label{eqn:normalform1}
%			\begin{pmatrix}
%				e_{i_{j,1}} & \dots & e_{i_{j,m_j}} \\
%				0 & \dots & 0
%			\end{pmatrix}\tag{$*$}
%		\end{equation}
%		and all $(i_{j,j'})_{j,j'}$ are distinct,
%		
%		\item there exists a unique $t \in \{1,\dots,k-1\}$ such that $A_t$ is of the form
%		$$\begin{pmatrix}
%				0 & e_{i_{t,2}} & \dots & e_{i_{t,m_t}} \\
%				1 & 0 & \dots & 0
%			\end{pmatrix}$$
%		and all $A_j$ ($j \neq t$) are of the form \eqref{eqn:normalform1} and all $(i_{p,q})_{p,q}$ are distinct,
%		
%		\item there exists a unique $t \in \{1,\dots,k-1\}$ such that $A_t$ is of the form
%		\begin{equation}
%			\begin{pmatrix}
%				e_{i_{0}} & e_{i_{t,2}} & \dots & e_{i_{t,m_t}} \\
%				1 & 0 & \dots & 0
%			\end{pmatrix} \tag{$**$}
%		\end{equation}
%		and all $A_j$ ($j \neq t$) are of the form \eqref{eqn:normalform1} and all $(i_{j,j'})_{j,j'}$ are distinct and different from $i_0$ if $j \leqslant t$,
%		
%		\item there exists a unique $t \in \{1,\dots,k-1\}$ such that $A_t$ is of the form
%		\begin{equation}\label{eqn:normalform3}
%			\begin{pmatrix}
%				e_{i_{0}}+e_{i_{1}} & e_{i_{t,2}} & \dots & e_{i_{t,m_t}} \\
%				1 & 0 & \dots & 0
%			\end{pmatrix} \tag{$***$}
%		\end{equation}
%		with $i_1 < i_0$ and all $A_j$ ($j \neq t$) are of the form \eqref{eqn:normalform1}, all $(i_{j,j'})_{j,j'}$ are distinct, there exists $(t',j')$ with $t' > t$ such that $i_{t',j'} = i_0$ and $i_{j,j'} \neq i_1$ if $j \leqslant t'$.
%	\end{itemize}

\begin{corollaryx}{\label{orbites fermées multiplicity free}}
	In the setting of \Cref{Theorem B}, the closed $B'$-orbits coincide with the $B'$-fixed points. More precisely, they are given by the normal forms satisfying 
	\begin{enumerate}[leftmargin=*, label = $(\roman*)$]
		\item $k = 0$,
		\item $i_{j_1,j_1'} < i_{j_2,j_2'}$ whenever $j_1 < j_2$.
	\end{enumerate}
\end{corollaryx}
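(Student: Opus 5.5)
We first prove that every closed $B'$-orbit contains a $B'$-fixed point. Since $\D_{\underline m}$ is compact, a closed orbit $B'\cdot A$ is compact. The standard argument is Borel's fixed point theorem. It applies to the complexification, or directly, since $B'$ is solvable and, by Lie--Kolchin over the flag variety, every compact $B'$-invariant set contains a point fixed by the maximal split torus and the unipotent radical. We then argue more concretely, because $B'\backslash \D_{\underline m}$ is finite. Take the diagonal torus $T'\subset B'$. In a closed orbit, every $T'$-limit $\lim_{t\to 0}\lambda(t)\cdot A$ along a one-parameter subgroup stays in the orbit. Choosing $\lambda$ generic and dominant, the limit is a $B'$-fixed point. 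Hence every closed orbit is a single $B'$-fixed point. Conversely, a fixed point is an orbit reduced to a point, which is closed. This gives the first sentence of the corollary.

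Next we determine the $B'$-fixed points among the normal forms of \Cref{Theorem B}. Since each orbit has a unique normal form up to permutation inside blocks, it suffices to decide which normal forms $A$ satisfy $b\cdot A=A$ for all $b\in B'$. The first claim is that fixed points require $k=0$ and no block of type \eqref{eqn:normalform2}. Indeed, suppose some block has first column $(\sum e_{i_s},1)^T$. Act by the torus element scaling the last coordinate $e_n$ by $t$ and fixing the rest. This produces the column $(\sum e_{i_s}, t)$, and the resulting flag differs from $A$: the subspace $D_{j_0}$ contains the vector $\sum e_{i_s}+e_n$, and rescaling would require it to contain $\sum e_{i_s}+te_n$. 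The difference $(1-t)e_n$ would then lie in $D_{j_0}$, which is impossible because $\rank_{\{n\},j_0}$ together with the normal form shows $e_n\notin D_{j_0}$. Here we use that $e_{i_1},\dots,e_{i_k}$ do not all appear in blocks $\le j_0$, for $k\ge 1$. The case $k=0$ in the statement refers to the block being $(0,1)^T$. So for \eqref{eqn:normalform2} with $k\ge1$ the point is not fixed. When $k=0$, the column is $e_n$, which is $B'$-stable, so this case reduces to the coordinate case.

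The remaining normal forms are coordinate flags $D_s=\Span(e_i : i\in I_s)$, where $I_s$ is the set of indices used in blocks $1,\dots,s$, with $n$ possibly included. A coordinate subspace $\Span(e_i, i\in I)$ is stable under the upper triangular group of $GL(n-1)\times GL(1)$ if and only if $I\cap\{1,\dots,n-1\}$ is an initial segment $\{1,\dots,p\}$. Applying this to every $s$ shows that the indices in block $j_1$ must all be smaller than those in block $j_2$ whenever $j_1<j_2$. This is condition (ii), with $e_n$ unconstrained. Hence the fixed points are exactly the normal forms satisfying (i) and (ii).

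The main obstacle is the rigorous claim that a closed orbit must be a single point over $\R$. To handle it we use the torus-limit argument: orbits are finite in number and locally closed, and a closed orbit is stable under limits. Alternatively, we verify directly that each non-fixed normal form has in its closure a strictly smaller orbit, obtained by degenerating either the $e_n$ component or an inversion $e_{i}\to e_{i'}$ with $i'<i$ via a unipotent element of $B'$.
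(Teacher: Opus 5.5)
Your classification of the $B'$-fixed points among the normal forms (second and third paragraphs) is correct. Everything therefore hinges on ``closed $\Rightarrow$ fixed'', and that step has a genuine gap.

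The sentence ``choosing $\lambda$ generic and dominant, the limit is a $B'$-fixed point'' is false. A limit $\lim_{t\to0}\lambda(t)\cdot A$ along a regular cocharacter of $T'$ is only a $T'$-fixed point, i.e.\ a coordinate flag. For example, take $n=3$ and $\underline m=(1,1,1)$. The coordinate flag $A=\bigl(\Span(e_2)\subset\Span(e_2,e_1)\bigr)$ is $T'$-fixed, so every cocharacter limit of $A$ is $A$ itself. Yet the unipotent element $e_2\mapsto e_2+ce_1$ of $B'$ moves it; its orbit is one of the one-dimensional orbits in \Cref{GL3-GL2}. So inside a closed orbit $\mathcal O$ the torus limit only produces some $T'$-fixed $y\in\mathcal O$, and showing that $y$ is $B'$-fixed is exactly what remains to be proved. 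The appeal to Borel's theorem does not supply this as written. That theorem yields a fixed point in the Zariski closure of the complexified orbit, and nothing in your argument places it in the real orbit $B'\cdot A$. Also, ``Lie--Kolchin over the flag variety'' is not a statement you can invoke here.

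One way to close the gap along your lines is as follows. Since $T'$ fixes $y$, one has $\mathcal O=B'\cdot y=U'\cdot y$, with $U'$ the unipotent radical of $B'$. The stabilizer $U'_y$ of a coordinate flag is a pattern subgroup, hence connected. So $U'\cdot y\cong U'/U'_y$ is diffeomorphic to some $\R^d$, and compactness of $\mathcal O$ forces $d=0$.

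The paper instead carries out what your last sentence only announces. If $k\geqslant1$, the orbit contains the flags whose special column is $(\frac1N\sum_s e_{i_s},1)$, and these converge to the normal form with $k=0$. If $k=0$ and $a=i_{j_1,j_1'}>b=i_{j_2,j_2'}$ with $j_1<j_2$, the orbit contains the flags in which these two columns become $\frac1N e_a-e_b$ and $e_a$, and these converge to the normal form with $a$ and $b$ swapped. Uniqueness in \Cref{Theorem B} puts both limits outside the orbit. That route needs no fixed-point theory but depends on uniqueness of normal forms. Your route, once repaired, proves closed $=$ fixed independently of the normal forms.

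Two minor slips:
\begin{itemize}
\item Every normal form has exactly one block of type \eqref{eqn:normalform2}, so ``$k=0$ and no block of type \eqref{eqn:normalform2}'' should read ``$k=0$, i.e.\ the special column is $e_n$''.
\item $\rank_{\{n\},j_0}=1$ says nothing about whether $e_n\in D_{j_0}$. The correct reason is that no $e_{i_s}$ occurs in blocks $\leqslant j_0$, so $\sum_s e_{i_s}$ is not in the span of the coordinate vectors of those blocks.
\end{itemize}
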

\begin{proof}
	First note that the normal forms satisfying both conditions $(i)$ and $(ii)$ do correspond to $B'$-fixed points, hence to closed orbits.
	
	On the other hand, if some normal form has $k \geqslant 1$, we may choose elements in the same orbit for which the block of type \eqref{eqn:normalform2} is replaced by
	\begin{equation*}
		\begin{pmatrix}
			\displaystyle \sum_{s=1}^k \frac{1}{N}e_{i_s} & e_{i_{j,2}} & \dots & e_{i_{j,m_j}} \\
			1 & 0 & \cdots & 0
		\end{pmatrix}
	\end{equation*}
	and then let $N \to +\infty$ to see that the orbit is not closed. In particular, the normal form of a closed orbit must satisfy condition $(i)$.
	
	Now assume that some normal form satisfies condition $(i)$ but not condition $(ii)$. Then, if $j_1,j_1',j_2,j_2'$ are indices satisfying
	\begin{equation*}
		i_{j_1,j_1'} > i_{j_2,j_2'}, \hspace{1cm} j_1 < j_2,
	\end{equation*}
	we may replace the columns
	\begin{equation*}
		\begin{pmatrix}
		e_{i_{j_1,j_1'}} & e_{i_{j_2,j_2'}} \\
		0 & 0
	\end{pmatrix}
	\end{equation*}
	by
	\begin{equation*}
		\begin{pmatrix}
		\frac{1}{N}e_{i_{j_1,j_1'}} - e_{i_{j_2,j_2'}} & e_{i_{j_1,j_1'}} \\
		0 & 0
	\end{pmatrix}
	\end{equation*}
	using both the left $B'$-action and the right $P_{\underline m}$-action, then let $N \to +\infty$ to find again that the orbit is not closed. Thus, any closed orbit must satisfy both conditions $(i)$ and $(ii)$.
\end{proof}

\begin{Remark}
	The closed orbits described in \Cref{orbites fermées multiplicity free} are exactly the double cosets of the matrices
	\begin{equation*}
		\begin{pmatrix}
		I_k & 0 & 0 \\
		0 & 0 & I_{n-k-1} \\
		0 & 1 & 0
	\end{pmatrix} \in GL(n,\R).
	\end{equation*}
	These $B'$-fixed points have been used in \cite{DitlevsenLabriet} to study differential symmetry breaking operators for the pair $(GL(n,\R),GL(n-1,\R))$.
\end{Remark}

\begin{corollaryx}
	In the setting of \Cref{Theorem B}, one has
	\begin{equation}\label{Corollary D}
		\left|B'\backslash G/P_{\underline m}\right| = \frac{1}{n}\binom{n}{m_1,\dots,m_l}\sum_{k=1}^{l}\frac{1}{(k-1)!}\sigma_k(m_1,\dots,m_l)
	\end{equation}
	where $\sigma_k$ is the $k$-th symmetric function in $l$-variables and 
	\begin{equation*}
		\binom{n}{m_1,\dots,m_l} = \frac{n!}{m_1!\cdots m_l!}
	\end{equation*}
	is a multinomial coefficient.
\end{corollaryx}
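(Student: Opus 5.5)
The plan is to count the normal forms of \Cref{Theorem B} directly. Since every $B'$-orbit has exactly one normal form up to permuting the $e_{i_{j,j'}}$ inside each block, $\left|B'\backslash G/P_{\underline m}\right|$ equals the number of such normal forms with each block taken as an unordered set of basis vectors. I will organise the count in three layers: the position $j_0$ of the special block \eqref{eqn:normalform2}, the way the remaining $n-1$ vectors $e_1,\dots,e_{n-1}$ fill the other columns, and the choice of the vectors $e_{i_1},\dots,e_{i_k}$ in the first column of $A_{j_0}$.

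\textbf{Step 1 (filling the columns).} Fix $j_0$. There are $m_{j_0}-1$ free columns in $A_{j_0}$ and $m_j$ in each other block, for a total of $n-1$. The $n-1$ basis vectors must be used bijectively, because all the $e_{i_{j,j'}}$ are distinct and the matrix has maximal rank. So the number of fillings is
\begin{equation*}
\frac{(n-1)!}{m_1!\cdots (m_{j_0}-1)!\cdots m_l!}=\frac{m_{j_0}}{n}\binom{n}{m_1,\dots,m_l}.
\end{equation*}
This produces the prefactor $\frac1n\binom{n}{m_1,\dots,m_l}$ and a weight $m_{j_0}$ attached to the special block.

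\textbf{Step 2 (the first column).} Once the filling is fixed, the first column of $A_{j_0}$ is determined by the data $j_0<j_1<\cdots<j_k$ together with a choice of one vector $e_{i_s}$ among the $m_{j_s}$ vectors of block $A_{j_s}$. The case $k=0$ contributes $1$. For a given increasing sequence the number of choices is $m_{j_1}\cdots m_{j_k}$. Summing over $j_0$ and the sequence, the total becomes a sum over subsets $S=\{j_0<\dots<j_k\}\subset\{1,\dots,l\}$ of $\prod_{j\in S}m_j$, grouped by $|S|$. This yields a combination of the elementary symmetric functions $\sigma_{k}(m_1,\dots,m_l)$.

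\textbf{Step 3 (matching the coefficients).} The last and most delicate step is to check that the weight attached to $\sigma_k$ is exactly $\frac{1}{(k-1)!}$, as in \eqref{Corollary D}. This is where I expect the main obstacle. A naive count with a strictly increasing sequence and one vector per block gives coefficient $1$ for every $k$. The factor $\frac1{(k-1)!}$ must therefore come from the precise form of the constraint on the $e_{i_s}$ and of the uniqueness clause in \Cref{Theorem B}: which orderings of the summands $\sum_s e_{i_s}$ are identified, and whether several summands may lie in the same later block. I would first re-derive this constraint from the proof of \Cref{Theorem B}, in particular from how the $B'$-action on the last row and the unipotent radical of $P_{\underline m}$ act on the special column. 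I would then recount, separating the choice of the support set of the special column from its ordering, and finally apply the multinomial identity of Step 1.

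As a sanity check I would verify the formula on small cases. For $l=2$ with $\underline m=(1,n-1)$ it should recover the count given by \Cref{lemme_rank} combined with the Bruhat decomposition, and for $P_{\underline m}=B$ it should recover the known count of $B'$-orbits on the full flag variety (cf.\ \cite{ColarussoEvens}). These checks fix any off-by-one or ordering discrepancy before writing the general argument.
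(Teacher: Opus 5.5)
\textbf{The proposal has a genuine gap: the count that produces the factor $\frac{1}{(k-1)!}$ is never actually carried out.}

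Your Step~1 and the overall organisation are fine. The problem is Step~2, where you let $e_{i_s}$ be any of the $m_{j_s}$ vectors of $A_{j_s}$, independently for each $s$. This puts coefficient $1$ in front of every $\sigma_k(m_1,\dots,m_l)$, giving the total $\frac1n\binom{n}{m_1,\dots,m_l}\sum_{k=1}^{l}\sigma_k(m_1,\dots,m_l)$. That is false as soon as $l\geqslant 3$: for $n=3$ and $P_{\underline m}=B$ it gives $14$, while there are $13$ orbits (Figure~\ref{GL3-GL2}). Step~3 notices the discrepancy but only says the factor ``must come from'' a constraint you have not identified, so the decisive part of the argument is missing. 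Your $l=2$ sanity check cannot detect the problem, since $\frac{1}{(k-1)!}=1$ for $k\leqslant 2$.

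The missing idea is that the placement of the summands of the special column is forced. The paper's proof rests on this: a normal form with $k$ summands is determined by the support set $\{i_1,\dots,i_k\}$, the sequence $j_0<\cdots<j_k$, and the remaining column sets. So once the support set and the blocks $j_1<\cdots<j_k$ are fixed, exactly one of the $k!$ bijections between them is admissible.

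The mechanism is as follows. Suppose $e_a\in A_j$ and $e_b\in A_{j'}$ are both summands with $a<b$ and $j\leqslant j'$. The element $I-E_{ab}\in B'$ sends $e_b$ to $e_b-e_a$ and removes $e_a$ from the special column. The column $e_b-e_a$ of $A_{j'}$ is then restored to $e_b$ by adding the column $e_a$ of $A_j$: use the Levi factor if $j=j'$, and the unipotent radical of $P_{\underline m}$ if $j<j'$. Hence the summands lie in distinct blocks and their indices are monotone along $j_1<\cdots<j_k$. The direction of monotonicity does not matter for the count.

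With this, the paper counts as follows for fixed $j_0<\cdots<j_k$:
\begin{itemize}
\item choose the support set in $\binom{n-1}{k}$ ways and place it in the unique admissible way;
\item distribute the remaining $n-1-k$ vectors in $\frac{(n-1-k)!}{m_1!\cdots m_l!}\,m_{j_0}m_{j_1}\cdots m_{j_k}$ ways;
\item use $\binom{n-1}{k}(n-1-k)!=\frac{(n-1)!}{k!}$ to get $\frac1n\binom{n}{m_1,\dots,m_l}\frac{1}{k!}m_{j_0}\cdots m_{j_k}$.
\end{itemize}
Summing over all $j_0<\cdots<j_k$ gives $\frac1n\binom{n}{m_1,\dots,m_l}\frac{1}{k!}\sigma_{k+1}(m_1,\dots,m_l)$. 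This is the term $\frac{1}{(k-1)!}\sigma_k$ of the statement after shifting the index.

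You can also repair your own framework directly. The group $\mathfrak S_k$ acts freely on your pairs (filling, choice) by permuting the $k$ chosen vectors among $A_{j_1},\dots,A_{j_k}$. Each orbit contains exactly one admissible configuration, so your Step~2 count for each $S=\{j_0<\cdots<j_k\}$ must be divided by $k!$.
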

\begin{proof}
	Let us consider all the normal forms for which $k = 0$. These are in bijection with the quotient of the symmetric group $S_n$ by its subgroup $S_{m_1}\times \cdots \times S_{m_l}$ which is of cardinality equal to
	\begin{equation*}
		\binom{n}{m_1,\dots,m_l} = \frac{1}{n}\binom{n}{m_1,\dots,m_l}\sigma_1(m_1,\dots,m_l).
	\end{equation*}
	We now consider all the normal forms for a specific value of $k \geqslant 1$. Such a normal form is determined by the set $\{i_1,\dots,i_{k}\}$, the sequence $j_0 < \cdots < j_k$ and the sets $\{e_{i_{j,1}},\dots,e_{i_{j,m_j}}\}$ (or $\{e_{i_{j,2}},\dots,e_{i_{j,m_j}}\}$ if $j \in \{j_0,\dots,j_k\}$). The number of such normal forms equals
	\begin{equation*}
		\sum_{1 \leqslant j_0 < \cdots < j_k \leqslant l} \binom{n-1}{k}\frac{(n-1-k)!}{m_1!\cdots m_{l-1}!}m_{j_0}\cdots m_{j_k} \\ 
		= \frac{1}{n} \binom{n}{m_1,\dots,m_l} \frac{1}{k!}\sigma_{k+1}(m_1,\dots,m_l).
	\end{equation*}
	Thus, the formula (\ref{Corollary D}) follows.
\end{proof}

% \begin{Theorem}
% 	The map $\Psi$ induces a poset isomorphism onto its image {\color{red} A FAIRE}.
% \end{Theorem}

\begin{Example}
	The Hasse diagram for the pair $(G,G') = (GL(3,\R),GL(2,\R) \times GL(1,\R))$ and $P$ being minimal parabolic is given in \Cref{GL3-GL2}. Integers on the left indicate orbit dimensions.
	
	\begin{center}
		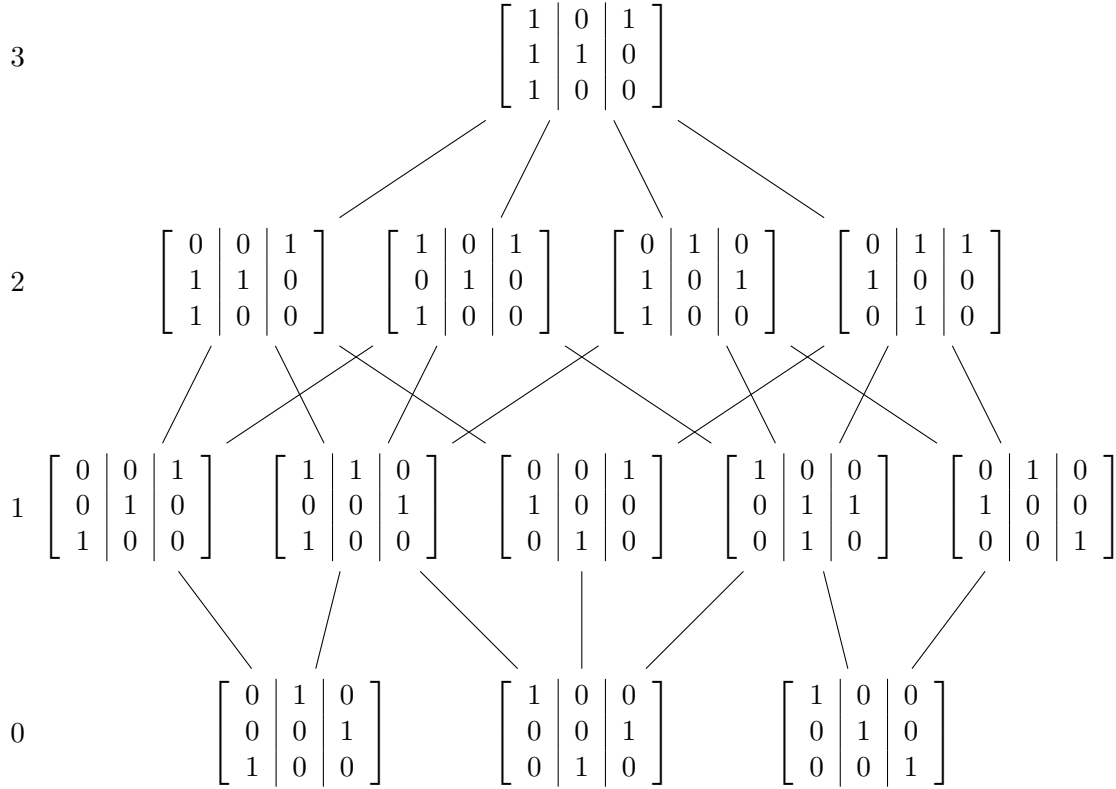
\begin{figure}[H]
			\centering
			\begin{tikzpicture}[scale=0.3]
			\node (dim0) at (-25,0) {$0$};
			\node (dim1) at (-25,10) {$1$};
			\node (dim2) at (-25,20) {$2$};
			\node (dim3) at (-25,30) {$3$};
				\node (X1) at (-12.5,0) {$\left[\begin{tabular}{c|c|c}
					0 & 1 & 0 \\
					0 & 0 & 1\\
					1 & 0 & 0
				\end{tabular}\right]$};
				\node (X2) at (0,0) {$\left[\begin{tabular}{c|c|c}
					1 & 0 & 0\\
					0 & 0 & 1\\
					0 & 1 & 0
				\end{tabular}\right]$};
				\node (X3) at (12.5,0) {$\left[\begin{tabular}{c|c|c}
					1 & 0 & 0\\
					0 & 1 & 0\\
					0 & 0 & 1
				\end{tabular}\right]$};
				\node (Y1) at (-20,10) {$\left[\begin{tabular}{c|c|c}
					0 & 0 & 1\\
					0 & 1 & 0\\
					1 & 0 & 0
				\end{tabular}\right]$};
				\node (Y2) at (-10,10) {$\left[\begin{tabular}{c|c|c}
					1 & 1 & 0\\
					0 & 0 & 1\\
					1 & 0 & 0
				\end{tabular}\right]$};
				\node (Y3) at (-0,10) {$\left[\begin{tabular}{c|c|c}
					0 & 0 & 1\\
					1 & 0 & 0\\
					0 & 1 & 0
				\end{tabular}\right]$};
				\node (Y4) at (10,10) {$\left[\begin{tabular}{c|c|c}
					1 & 0 & 0\\
					0 & 1 & 1\\
					0 & 1 & 0
				\end{tabular}\right]$};
				\node (Y5) at (20,10) {$\left[\begin{tabular}{c|c|c}
					0 & 1 & 0\\
					1 & 0 & 0\\
					0 & 0 & 1
				\end{tabular}\right]$};
				\node (Z1) at (-15,20) {$\left[\begin{tabular}{c|c|c}
					0 & 0 & 1\\
					1 & 1 & 0\\
					1 & 0 & 0
				\end{tabular}\right]$};
				\node (Z2) at (-5,20) {$\left[\begin{tabular}{c|c|c}
					1 & 0 & 1\\
					0 & 1 & 0\\
					1 & 0 & 0
				\end{tabular}\right]$};
				\node (Z3) at (5,20) {$\left[\begin{tabular}{c|c|c}
					0 & 1 & 0\\
					1 & 0 & 1\\
					1 & 0 & 0
				\end{tabular}\right]$};
				\node (Z4) at (15,20) {$\left[\begin{tabular}{c|c|c}
					0 & 1 & 1\\
					1 & 0 & 0\\
					0 & 1 & 0
				\end{tabular}\right]$};
				\node (T) at (0,30) {$\left[\begin{tabular}{c|c|c}
					1 & 0 & 1\\
					1 & 1 & 0\\
					1 & 0 & 0
				\end{tabular}\right]$};
				
				\draw[black] (X1) -- (Y1);
				\draw[black] (X1) -- (Y2);
				\draw[black] (X2) -- (Y2);
				\draw[black] (X2) -- (Y3);
				\draw[black] (X2) -- (Y4);
				\draw[black] (X3) -- (Y4);
				\draw[black] (X3) -- (Y5);
				\draw[black] (Y1) -- (Z1);
				\draw[black] (Y1) -- (Z2);
				\draw[black] (Y2) -- (Z1);
				\draw[black] (Y2) -- (Z2);
				\draw[black] (Y2) -- (Z3);
				\draw[black] (Y3) -- (Z1);
				\draw[black] (Y3) -- (Z4);
				\draw[black] (Y4) -- (Z2);
				\draw[black] (Y4) -- (Z3);
				\draw[black] (Y4) -- (Z4);
				\draw[black] (Y5) -- (Z3);
				\draw[black] (Y5) -- (Z4);
				\draw[black] (Z1) -- (T);
				\draw[black] (Z2) -- (T);
				\draw[black] (Z3) -- (T);
				\draw[black] (Z4) -- (T);
			\end{tikzpicture}
			\caption{Hasse diagram for $(G,G') = (GL(3,\R),GL(2,\R) \times GL(1,\R))$ and $P$ minimal parabolic.}
			\label{GL3-GL2}
		\end{figure}
	\end{center}
\end{Example}

\subsection{Orbits description: grassmannian case}

% Whenever both partitions $\underline n$ and $\underline m$ have length $2$, both flag manifolds $G/P_{\underline n}$ and $G/P_{\underline m}$ are Riemannian symmetric. As in T. Kobayashi's classification of visible actions (\cite{ActionVisible}), we will use this symmetric case to compute the orbits in the six other cases (see \Cref{Proof}).

In \cite{ActionVisible}, T. Kobayashi uses the special case where both partitions $\underline n$ and $\underline m$ have length $2$ as a starting point to treat the remaining cases.
We will use the same strategy to compute the orbits in the six other cases (see \Cref{Proof}).

\setcounter{thmxx}{1}
\begin{thmxx}\label{Theorem B'}
	Let $P_{\underline m}$ be the maximal parabolic subgroup of $GL(n,\R)$ associated to a composition $\underline m = (m_1,m_2)$ of $n$ and let $B'$ be the standard Borel subgroup of $GL(n_1,\R) \times GL(n_2,\R)$. We denote by $(e_1,\dots,e_{n_1})$ the canonical basis of $\R^{n_1}$ and $(f_1,\dots,f_{n_2})$ the canonical basis of $\R^{n_2}$. Then, any $B'$-orbit of $\mathcal D_{\underline{m}} \simeq M_{n,m_1}(\R)_{\mathrm{reg}}/P_{(m_1)}$ admits a representative given by
	\begin{equation*}
		\begin{bmatrix}
		0 & \cdots & 0  & e_{j_{r+1}} & \cdots & e_{j_{s}} & e_{j_{s+1}} & \cdots & e_{j_{m_1}} \\
		f_{i_1} & \cdots & f_{i_{r}}  & f_{i_{r+1}} & \cdots & f_{i_{s}} & 0 & \cdots & 0
	\end{bmatrix}.
	\end{equation*}
	Such a representative is unique if we assume that
	\begin{itemize}[leftmargin=*, label = --]
		\item $i_1 < \cdots < i_{s}$,
		\item all indices $j_t$ are different, $j_{s+1} < \cdots < j_{m_1}$.
	\end{itemize}
\end{thmxx}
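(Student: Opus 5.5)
We realize a point of $\mathcal D_{(m_1,m_2)}$ as an $m_1$-dimensional subspace $V\subset \R^n=\R^{n_1}\oplus\R^{n_2}$, represented by an $n\times m_1$ matrix of rank $m_1$ modulo right multiplication by $GL(m_1,\R)$. We write $p_2:\R^n\to\R^{n_2}$ for the projection onto the last $n_2$ coordinates, and set $V_1=V\cap(\R^{n_1}\oplus 0)$. The plan is a two-stage Gaussian elimination: first the $B_{n_2}$-action on the $f$-coordinates, then the $B_{n_1}$-action on the $e$-coordinates. Here $B'=B_{n_1}\times B_{n_2}$, and $B'$ acts only by left multiplication, with no mixing between the two blocks. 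Column operations (the right $GL(m_1)$-action) are free throughout. At the end we read off the indices from $B'$-invariant rank data.

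\textbf{Existence.} First consider the image $p_2(V)\subset\R^{n_2}$. Since $B_{n_2}$ is a Borel subgroup of $GL(n_2)$, the classical Bruhat/Schubert-cell normal form lets us move $p_2(V)$ to a coordinate subspace $\Span(f_{i_1},\dots,f_{i_s})$ with $i_1<\dots<i_s$. We then choose columns so that the lower block consists of $f_{i_1},\dots,f_{i_s}$ followed by zero columns; the last $m_1-s$ columns then span $V_1$. Next we treat the upper block. The subspace $V_1\subset\R^{n_1}$ is brought by $B_{n_1}$ to $\Span(e_{j_{s+1}},\dots,e_{j_{m_1}})$, again by Bruhat normal form. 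We then use column operations with the $V_1$ columns to clear, in the first $s$ columns, every entry in the rows $j_{s+1},\dots,j_{m_1}$.

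What remains is an $n_1\times s$ matrix $C$ describing the map $p_2(V)\to\R^{n_1}/V_1$. This map is determined only up to the following operations:
\begin{itemize}[leftmargin=*, label = --]
\item left action of the stabilizer of the coordinate subspace in $B_{n_1}$, restricted to the remaining rows;
\item right action of upper triangular $s\times s$ matrices, which arises because compensating a $B_{n_2}$ change on the $f$-side preserves $\Span(f_{i_1},\dots)$ and its standard flag;
\item right scaling.
\end{itemize}
Thus we are reduced to the normal form of a matrix $C$ under $B\times B$ acting on left and right. By the Bruhat decomposition for rectangular matrices (rook placements), $C$ becomes a partial permutation matrix. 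This gives the zeros in positions $1,\dots,r$ and distinct unit vectors $e_{j_{r+1}},\dots,e_{j_s}$, distinct also from the $j_{s+1},\dots,j_{m_1}$. Finally we reorder the columns so that the zero ones come first.

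\textbf{Uniqueness.} Consider the invariants $\rank_J$ for $J$ of the form $\{n_1+n_2-a+1,\dots,n\}$, i.e. final segments of the $f$-part. These are $B'$-invariant, and by the Bruhat fact they recover the set $\{i_1,\dots,i_s\}$. Next, $\dim V_1\cap\Span(e_1,\dots,e_b)$ is $B'$-invariant and recovers $\{j_{s+1},\dots,j_{m_1}\}$; alternatively this is expressed through $\rank$ on $J=\{b+1,\dots,n\}$. Finally, the partial permutation $i_t\mapsto j_t$ for $r<t\le s$ is recovered from the mixed invariants
\[
\dim\bigl(V\cap(\Span(e_1,\dots,e_b)\oplus\Span(f_{c},\dots,f_{n_2}))\bigr).
\]
These equal $m_1-\rank_J$ for $J=\{b+1,\dots,n_1\}\cup\{n_1+1,\dots,n_1+c-1\}$, and this $J$ lies in $\mathcal Q_{B'}$-type, so the invariant is $B'$-invariant. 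It remains to compute these invariants on the normal form: each equals a count of rook positions in a rectangle, and such counts determine a rook placement.

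\textbf{Main obstacle.} The delicate step is checking that the residual symmetry group acting on $C$ is exactly a product of two Borel subgroups with the correct orientations. Specifically, the $B_{n_2}$ stabilizer of $\Span(f_{i_t})$ must induce upper triangular operations on the columns $1..s$ in the order $i_1<\dots<i_s$, and the unipotent part of $B_{n_1}$ must move rows only upward. We would verify this orientation carefully, since it fixes which ordering constraint survives (namely none on $j_{r+1},\dots,j_s$, but $i$ increasing).
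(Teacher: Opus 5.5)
Your existence argument is correct. It is a more structural version of the paper's Gauss--Jordan procedure (\Cref{lemme_fondamental}, \Cref{lemme thm B}, \Cref{lemme thm B bis}): the paper normalizes $p_2(V)$ and $V_1$ and brings the induced map $p_2(V)\to\R^{n_1}/V_1$ to rook-placement form by explicit column manipulations, and you do the same thing through Schubert cells and the $T\times T$ normal form.

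The problem is in the uniqueness step, in your mixed invariants. Upper triangular matrices in $GL(n_2,\R)$ stabilize the initial segments $\Span(f_1,\dots,f_c)$, not the final segments $\Span(f_c,\dots,f_{n_2})$. So $\dim\bigl(V\cap(\Span(e_1,\dots,e_b)\oplus\Span(f_c,\dots,f_{n_2}))\bigr)$ is not $B'$-invariant. Equivalently, your set $J=\{b+1,\dots,n_1\}\cup\{n_1+1,\dots,n_1+c-1\}$ meets the second block in an initial segment, whereas $\rank_J$ is $B'$-invariant only when $J$ meets each block in a final segment (the list $\mathcal J$ at the start of Case $0)$). Your identity with $m_1-\rank_J$ is right; the invariance claim is not. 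Your first family correctly uses final segments on the $f$-side, and the orientation got flipped in the third. For a concrete failure, take $n_1=1$, $n_2=2$, $m_1=1$. The element $\mathrm{diag}\bigl(1,\begin{pmatrix}1&1\\0&1\end{pmatrix}\bigr)\in B'$ maps $V=\Span(e_1+f_2)$ to $\Span(e_1+f_1+f_2)$, and your invariant with $b=1$, $c=2$ drops from $1$ to $0$. A function that is not constant on orbits cannot show that two distinct normal forms lie in different orbits, so uniqueness is not proved as written.

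The repair is to use initial segments on both sides: $\dim\bigl(V\cap(\Span(e_1,\dots,e_b)\oplus\Span(f_1,\dots,f_c))\bigr)=m_1-\rank_J$ with $J=\{b+1,\dots,n_1\}\cup\{n_1+c+1,\dots,n\}$. These are exactly the paper's mixed ranks $\rank_{\{j,\dots,n_1\}\cup\{n_1+i,\dots,n\}}$. On a normal form the $i_t$ are distinct and the $j_t$ are distinct, so this quantity equals $\#\{t\le r : i_t\le c\}+\#\{r<t\le s : i_t\le c,\ j_t\le b\}+\#\{t>s : j_t\le b\}$. Each term is then accounted for as follows:
\begin{itemize}
\item Taking $b=0$ recovers $\{i_1,\dots,i_r\}$, which your first two families do not separate from $\{i_{r+1},\dots,i_s\}$.
\item The last term is known from your second family.
\item The middle terms, over all $(b,c)$, are rook counts in the rectangles $\{i\le c\}\times\{j\le b\}$, and these determine the partial permutation $i_t\mapsto j_t$ by inclusion--exclusion.
\end{itemize}
With this change your argument is complete.
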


\setcounter{corollaryxx}{2}
\begin{corollaryxx}
	In the setting of \Cref{Theorem B'}, the closed $B'$-orbits coincide with the $B'$-fixed points. More explicitly, they are given by the normal forms
	\begin{equation*}
		\begin{bmatrix}
		0 & \cdots & 0 & e_{1} & \cdots & e_{m_1-r} \\
		f_1 & \cdots & f_r & 0 & \cdots & 0
	\end{bmatrix}.
	\end{equation*}
	for $r \in \{\max(0,m_1-n_1),\dots,\min(m_1,n_1)\}$.
\end{corollaryxx}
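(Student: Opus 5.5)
We must prove the corollary: in the setting of \Cref{Theorem B'}, the closed $B'$-orbits are exactly the $B'$-fixed points, and these are the listed normal forms with $r$ ranging over $\max(0,m_1-n_1)\leqslant r\leqslant\min(m_1,n_1)$. The plan mirrors the proof of \Cref{orbites fermées multiplicity free}.

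\textbf{Step 1: the listed forms are fixed points.} The given normal form $D$ is the subspace $\Span(e_1,\dots,e_{m_1-r})\oplus\Span(f_1,\dots,f_r)$. Each summand is a standard initial segment of $\R^{n_1}$, respectively $\R^{n_2}$. Upper triangular matrices preserve such segments, so $B'=B_{n_1}\times B_{n_2}$ fixes $D$. Its orbit is therefore a single point, hence closed.

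\textbf{Step 2: closed orbits have this form.} Take a normal form from \Cref{Theorem B'} whose orbit is closed. We show it has $s=r$ and initial-segment indices by exhibiting a limit in the orbit closure that lies in a different orbit.

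First suppose $s>r$, so some column $e_{j_t}+f_{i_t}$ with $t\in\{r+1,\dots,s\}$ occurs. Act by the diagonal element $\operatorname{diag}(1/N\cdot I_{n_1},I_{n_2})\in B'$ and rescale the other columns $e_{j}$ by the right $GL(m_1)$-action. This replaces the column by $\frac1N e_{j_t}+f_{i_t}$ and leaves the other columns unchanged as subspaces. Letting $N\to\infty$ gives the limit flag in which this column becomes $f_{i_t}$.

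The limit is still of maximal rank, since the $f_i$ are distinct and the $e_j$ are distinct. It has strictly larger $\rank_{J}$ for $J$ the set of the last $n_2$ coordinates. By \Cref{lemme_rank}, $\rank_J$ is a $B'$-invariant (indeed $P_J\supset B'$), so the limit lies in a different orbit and the orbit is not closed. Hence $s=r$.

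Now suppose $s=r$ but the $f$-indices are not $\{1,\dots,r\}$. Then some $f_{i}$ is used while $f_{i'}$ with $i'<i$ is unused. Using the unipotent part of $B_{n_2}$, replace $f_i$ by $f_i+N f_{i'}$ and rescale to $\frac1N f_i+f_{i'}$. Letting $N\to\infty$ gives a limit with $f_{i'}$ in place of $f_i$.

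This limit is distinguished from the original by a $B'$-invariant rank. Take $\rank_J$ with $J=\{n_1+i',\dots,n\}$, which is $B'$-invariant since it corresponds to a parabolic in $\mathcal Q_{B'}$ containing $B_{n_2}$. The count of used $f$-indices $\geqslant i'$ changes under the limit, and lower semicontinuity confirms the limit lies in the boundary. So the orbit is not closed. The same argument applies to the $e$-indices, using the ranks $\rank_{\{j,\dots,n_1\}\cup\{n_1+1,\dots,n\}}$.

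Therefore a closed orbit has $e$-indices $\{1,\dots,m_1-r\}$ and $f$-indices $\{1,\dots,r\}$. This is exactly the listed form. The bounds on $r$ come from $r\leqslant n_2$ and $m_1-r\leqslant n_1$.

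\textbf{Main obstacle.} The delicate step is verifying that each limit really lies in a different orbit. I would do this by choosing explicitly, in each case, the $B'$-invariant rank from \Cref{subsection invariant} that changes, relying on the uniqueness statement of \Cref{Theorem B'}. Since closed orbits are then exactly the listed fixed points, and fixed points are always closed orbits, the two sets coincide.
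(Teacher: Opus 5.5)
Your strategy is the paper's: the listed forms are $B'$-fixed, and any other normal form degenerates along an explicit one-parameter family in its orbit to a point outside the orbit. However, both rank computations you give to show the limit is ``outside the orbit'' are wrong as stated, and this is exactly the step you flagged as delicate.

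In the case $s>r$, your limit has $f$-block $(f_{i_1},\dots,f_{i_s},0,\dots,0)$, the same as before. So $\rank_{\{n_1+1,\dots,n\}}$ equals $s$ both on the orbit and at the limit. More basically, by lower semicontinuity no invariant rank can be strictly larger at a limit point. What changes is the $e$-block: $\rank_{\{1,\dots,n_1\}}$ drops from $m_1-r$ to $m_1-s$. It drops to $m_1-r-1$ if you scale only the coordinate $j_t$, which is also what makes your phrase ``leaves the other columns unchanged'' true; the scalar $\frac1N I_{n_1}$ moves every mixed column.

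In the index-lowering step, $J=\{n_1+i',\dots,n\}$ separates nothing. Here $\rank_J$ counts the used $f$-indices that are $\geqslant i'$, and replacing $i$ by $i'$ keeps that count unchanged. You need $J=\{n_1+p,\dots,n\}$ with $i'<p\leqslant i$, e.g.\ $p=i$, whose rank drops by one. Likewise, for the $e$-indices you must take $\{j,\dots,n_1\}\cup\{n_1+1,\dots,n\}$ with $j$ the used index, not the unused smaller one.

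With these corrections the argument goes through. Alternatively, do as the paper does. In each case the limit is itself a normal form of \Cref{Theorem B'} (with more pure $f$-columns, respectively a smaller index set) that differs from the original. Uniqueness of normal forms (\Cref{normal form unique}) then puts it in another orbit, with no rank bookkeeping.

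Finally, your bounds $r\leqslant n_2$ and $m_1-r\leqslant n_1$ give $\max(0,m_1-n_1)\leqslant r\leqslant\min(m_1,n_2)$, not $\min(m_1,n_1)$ as in the statement. Your bound is the correct one. For example, with $n_1=1$, $n_2=2$, $m_1=2$, the fixed point $\Span(f_1,f_2)$ has $r=2>\min(m_1,n_1)$. You should point out this discrepancy rather than claim agreement with the listed range.
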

\begin{proof}
	First observe that any flag of the form
	\begin{equation*}
		\begin{bmatrix}
		0 & \cdots & 0 & e_{1} & \cdots & e_{m_1-r} \\
		f_1 & \cdots & f_r & 0 & \cdots & 0
	\end{bmatrix}
	\end{equation*}
	is fixed under the $P'$-action. In particular, its orbit is a singleton, hence closed. \\
	Conversely, let $\mathcal O$ be a closed $P'$-orbit having the following normal form
	\begin{equation*}
		D = \begin{bmatrix}
		0 & \cdots & 0  & e_{j_{r+1}} & \cdots & e_{j_{s}} & e_{j_{s+1}} & \cdots & e_{j_{m_1}} \\
		f_{i_1} & \cdots & f_{i_{r}}  & f_{i_{r+1}} & \cdots & f_{i_{s}} & 0 & \cdots & 0
	\end{bmatrix}.
	\end{equation*}
	If $s \neq r$, one has	
	\begin{multline*}
		\mathcal O \ni \begin{bmatrix}
			0 & \cdots & 0  & \frac{1}{N} e_{j_{r+1}} & e_{j_{r+2}} & \cdots & e_{j_{s}} & e_{j_{s+1}} & \cdots & e_{j_{m_1}} \\
			f_{i_1} & \cdots & f_{i_{r}}  & f_{i_{r+1}} & f_{i_{r+2}} & \cdots & f_{i_{s}} & 0 & \cdots & 0
		\end{bmatrix} \\
		\xrightarrow[N \to +\infty]{}\begin{bmatrix}
			0 & \cdots & 0  & 0 & e_{j_{r+2}} & \cdots & e_{j_{s}} & e_{j_{s+1}} & \cdots & e_{j_{m_1}} \\
			f_{i_1} & \cdots & f_{i_{r}}  & f_{i_{r+1}} & f_{i_{r+2}} & \cdots & f_{i_{s}} & 0 & \cdots & 0
		\end{bmatrix}.
	\end{multline*}
	Since this is a normal form different from the one defining $\mathcal O$, this contradict $\mathcal O$ being closed. We conclude that we must have $s = r$ so that
	\begin{equation*}
		D = \begin{bmatrix}
		0 & \cdots & 0  & e_{j_{r+1}} & \cdots & e_{j_{m_1}} \\
		f_{i_1} & \cdots & f_{i_{r}}  & 0 & \cdots & 0
	\end{bmatrix}.
	\end{equation*}
	Since $i_1 < \cdots < i_r$, one has $i_t > t$ for all $t$ so that
	\begin{align*}
		D &\sim \begin{bmatrix}
			0 & \cdots & 0  & e_{j_{r+1}} & \cdots & e_{j_{m_1}} \\
			f_1 + \frac{1}{N}f_{i_1} & \cdots & f_r + \frac{1}{N}f_{i_{r}}  & 0 & \cdots & 0
		\end{bmatrix} \\
		& \xrightarrow[N \to +\infty]{} \begin{bmatrix}
			0 & \cdots & 0  & e_{j_{r+1}} & \cdots & e_{j_{m_1}} \\
			f_1 & \cdots & f_r & 0 & \cdots & 0
		\end{bmatrix}.
	\end{align*}
	Similarly, $\mathcal O$ being closed, unicity of normal forms implies that $(i_1,\dots,i_r) = (1,\dots,r)$. One may show in the same way that $(j_{r+1},\dots,j_{m_1}) = (1,\dots,k-r)$.
\end{proof}

\begin{Example}
	The Hasse diagram for the pair $(G,G') = (GL(4,\R),GL(2,\R) \times GL(2,\R))$ and $(m_1,m_2)=(1,3)$ is given in \Cref{hasse_diagram2}. Integers on the left indicate orbit dimensions.
	
	\begin{figure}[H]
		\centering
		\begin{tikzpicture}[scale=0.3]
			\node (dim0) at (-25,0) {$0$};
			\node (dim1) at (-25,10) {$1$};
			\node (dim2) at (-25,20) {$2$};
			\node (dim3) at (-25,30) {$3$};
			\node (E1) at (-7.5,0) {$\left[\begin{tabular}{c}
					0\\
					0\\
					1\\
					0
				\end{tabular}\right]$};
			\node (E2) at (-15,10) {$\left[\begin{tabular}{c}
					0\\
					0\\
					0\\
					1
				\end{tabular}\right]$};
			\node (F1) at (7.5,0) {$\left[\begin{tabular}{c}
					1\\
					0\\
					0\\
					0
				\end{tabular}\right]$};
			\node (F2) at (15,10) {$\left[\begin{tabular}{c}
					0\\
					1\\
					0\\
					0
				\end{tabular}\right]$};
			\node (EF11) at (0,10) {$\left[\begin{tabular}{c}
					1\\
					0\\
					1\\
					0
				\end{tabular}\right]$};
			\node (EF12) at (7.5,20) {$\left[\begin{tabular}{c}
					0\\
					1\\
					1\\
					0
				\end{tabular}\right]$};
			\node (EF21) at (-7.5,20) {$\left[\begin{tabular}{c}
					1\\
					0\\
					0\\
					1
				\end{tabular}\right]$};
			\node (EF22) at (0,30) {$\left[\begin{tabular}{c}
					0\\
					1\\
					0\\
					1
				\end{tabular}\right]$};
			\draw[black] (F1) -- (F2);
			\draw[black] (F1) -- (EF11);
			\draw[black] (E1) -- (E2);
			\draw[black] (E1) -- (EF11);
			\draw[black] (F2) -- (EF12);
			\draw[black] (E2) -- (EF21);
			\draw[black] (EF11) -- (EF12);
			\draw[black] (EF11) -- (EF21);
			\draw[black] (EF12) -- (EF22);
			\draw[black] (EF21) -- (EF22);
		\end{tikzpicture}
		\caption{Hasse diagram for $(G,G') = (GL(4,\R),GL(2,\R) \times GL(2,\R))$ and $(m_1,m_2)=(1,3)$.}
		\label{hasse_diagram2}
	\end{figure}
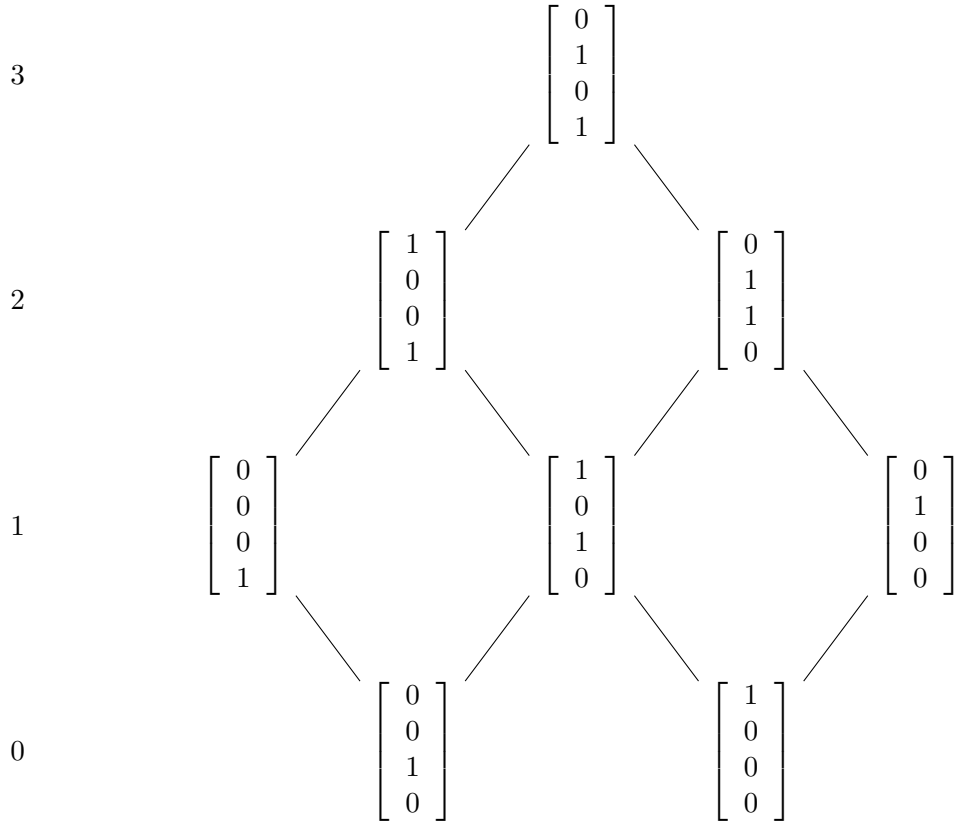
\end{Example}

\section{Proofs and remaining cases} \label{Proof}

In this section, we study the double coset space $B'\backslash G/P$ assuming Kobayashi's conditions \eqref{eqn:kobcon} one at a time in order to prove \cref{Theorem A}. The injectivity of the map $\Psi$ (see \Cref{Theorem A}) is proven in two steps:
\begin{itemize}[label = --]
	\item we first find a set of representatives for the $B'$-orbits on $G/P$,
	\item we then show that the $(B' \times P_{\underline m})$-invariant ranks introduced in \cref{subsection invariant} separate these representatives.
\end{itemize}
%\subsection[Computation of orbits]{Computation of $B'$-orbits}

\begin{Notation*}
	Let $\T(p,\R)$ denote the group of upper triangular matrices in $GL(p,\R)$ for $p \in \N_{>0}$.
	If two flags $D_1,D_2$ are in the same $B'$-orbit, we will write $D_1 \sim D_2$.
\end{Notation*}

\subsection[Case 0)]{Case $0)$} \label{cas 0}

Let $\underline m = (m_1,m_2), \underline n = (n_1,n_2)$ be compositions of $n$ and
\begin{equation*}
	B' = \left\{\begin{pmatrix}
	a & 0 \\
	0 & b
\end{pmatrix} \mid a \in \T(n_1,\R), \, b \in \T(n_2,\R)\right\} \subset GL(n,\R).
\end{equation*}
Following the $(M_{n,n-n_k}(\R)_{\mathrm{reg}}/P_{(n_1,\dots,n_{k-1})})$-realization of $\D_{\underline n}$ from \Cref{flag varieties}, will write any $D \in \D_{\underline m}$ as
\begin{equation*}
	D = \begin{bmatrix}
	U_{1} & \cdots & U_{m_1} \\
	V_{1} & \cdots & V_{m_1}
\end{bmatrix}
\end{equation*}
with $U_{1},\dots,U_{m_1} \in \R^{n_1}$ and $V_{1},\dots,V_{m_1} \in \R^{n_2}$.
Note that in this setting, $\rank_J$ is $B'$-invariant if and only if
\begin{multline*}
	J \in \mathcal J \coloneq \Big\{\{i,\dots,n_1\} \cup \{n_1+j,\dots,n\} \mid 1 \leqslant i \leqslant n_1, 1 \leqslant j \leqslant n_2\Big\} \\
	\cup \Big\{\{i,\dots,n_1\} \mid 1 \leqslant i \leqslant n_1\Big\} \cup \Big\{\{n_1+j,\dots,n\} \mid 1 \leqslant j \leqslant n_2\Big\}.
\end{multline*}
In particular, one has $\mathcal Q'_{B'} = \{P_J \mid J \in \mathcal J\}$ (see \Cref{def Q_H}).

The proof of \Cref{Theorem B'} relies on applying a suitable Gauss-Jordan elimination type process. The next lemma contains most of the procedure.

\begin{Lemma}{\label{lemme_fondamental}}
	Let $p,q \in \N_{>0}$ and $(e_1,\dots,e_q)$ be the canonical basis of $\R^q$. Consider the $T(p,\R) \times GL(q,\R)$ action on $M_{p,q}(\R)$ by
	\begin{equation*}
		(A,B)\cdot C = ACB^{-1}.
	\end{equation*}
	Then each element of $M_{p,q}(\R)$ belongs to a $(T(p,\R) \times GL(q,\R))$-orbit of the form
	\begin{equation*}
		(T(p,\R) \times GL(q,\R)) \cdot \begin{pmatrix}
		e_{i_1} & \cdots & e_{i_r} & 0 & \cdots & 0
	\end{pmatrix}
	\end{equation*}
	for some $r \in \N$ and $i_1 < \cdots < i_r$.
\end{Lemma}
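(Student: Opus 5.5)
We prove \Cref{lemme_fondamental} by a column-reduced echelon form computation, working from the bottom row upwards because $T(p,\R)$ acts on the left by upper triangular matrices. Right multiplication by $B^{-1}$ with $B \in GL(q,\R)$ performs arbitrary column operations on $C$. Left multiplication by an invertible upper triangular matrix allows two things: rescaling any row by a nonzero scalar, and adding to row $i$ any multiple of a row $i'$ with $i' > i$.

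We argue by induction on $q$, or equivalently on the number of processed columns. If $C = 0$, there is nothing to do and $r=0$. Otherwise, let $i_r$ be the largest index of a nonzero row of $C$; this will be the pivot row of the last nonzero column in the normal form. We proceed as follows.
\begin{enumerate}
\item Using column operations, arrange that exactly one column, say the first, has a nonzero entry in row $i_r$, and move it to position $r$ later.
\item Rescale this column so that the entry equals $1$.
\item Use column operations to clear row $i_r$ in all other columns. These columns now vanish in rows $\geqslant i_r$.
\item Use the left action of $T(p,\R)$: for each $i < i_r$, add $-c_{i1}$ times row $i_r$ to row $i$. This is allowed since $i_r > i$ and is given by an upper unipotent matrix. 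It kills all entries of the pivot column above row $i_r$ without changing the other columns, because their row $i_r$ entries are $0$.
\end{enumerate}
The pivot column is now $e_{i_r}$.

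The remaining $q-1$ columns form a matrix supported in rows $1,\dots,i_r-1$. We apply the induction hypothesis to this $(i_r-1)\times(q-1)$ block, with the group $T(i_r-1,\R)\times GL(q-1,\R)$. To do this we embed $T(i_r-1,\R)$ into $T(p,\R)$ as block-diagonal elements acting on the first $i_r-1$ rows and trivially on the others, and $GL(q-1,\R)$ as acting on the columns other than the pivot column. These operations act trivially on the pivot column $e_{i_r}$, since its entries in rows $<i_r$ are zero, so they do not disturb it. The induction yields columns $e_{i_1},\dots,e_{i_{r-1}}$ with $i_1<\dots<i_{r-1}<i_r$, together with zero columns. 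A final column permutation, which is an element of $GL(q,\R)$, puts the columns in the order $e_{i_1},\dots,e_{i_r},0,\dots,0$.

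The only point requiring care, and the one I regard as the main obstacle, is compatibility: the left triangular operations used in the recursive step must not destroy the earlier pivot, and the elimination must always go \emph{upwards}. Both facts follow from choosing the lowest nonzero row as the pivot at each step and from the block embedding described above. Uniqueness of the indices is not claimed in the statement, so no invariance argument is needed, although $i_1<\dots<i_r$ is in fact determined by the ranks of the bottom submatrices.
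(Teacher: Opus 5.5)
Your proof is correct and takes essentially the same route as the paper. Both arguments pick the lowest nonzero row as the pivot row, normalize the pivot to $1$, clear the rest of that row by column operations and the entries above the pivot by upper unipotent row operations, and then recurse on the remaining columns, which are supported strictly above the pivot. The differences are cosmetic. The paper first reduces to $r=\rank C$ nonzero columns and inducts on $r$, whereas you induct on $q$. You also make the block embedding of $T(i_r-1,\R)\times GL(q-1,\R)$ explicit, which cleanly justifies why earlier pivots are not disturbed.
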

\begin{proof}
	Let $A = (a_{ij}) \in M_{p,q}(\R)$. Since the $GL(q,\R)$-orbits on $M_{p,q}(\R)$ are determined by the linear span of columns of matrices, we may assume that $a_{ij} = 0$ if $j > \rank A \eqcolon r$.\\
	Next, we define $i_r$ as
	\begin{equation*}
		i_r = \max\{i \in \{1,\dots,p\} \mid \exists j \in \{1,\dots, q\}, \,  a_{ij} \neq 0\}.
	\end{equation*}
	Acting on the right by a permutation matrix if needed, we may assume that $a_{i_r,r} \neq 0$.
	Then, multiplying on the left by a suitable element of $T(p,\R)$ and on the right by a suitable element of $GL(q,\R)$, we find $B = (b_{ij})$ in the same orbit as $A$  which satisfies
	\begin{equation*}
		b_{ij} = \begin{cases}
		0 \text{ if }j > r \text{ or } \Big(j = r \text{ and } i \neq i_r \Big) \text{ or } \Big( i = i_r \text{ and } j \neq r \Big),\\
		1 \text{ if } (i,j) = (i_r,r).
	\end{cases}
	\end{equation*}
	We now proceed by finite induction on $r$ by defining 
	\begin{equation*}
		i_{r-1} = \max\{i \in \{1,\dots,i_{r-1}\} \mid \exists j \in \{1,\dots, r-1\}, \,  a_{ij} \neq 0\}
	\end{equation*}
	and applying the same process.
\end{proof}

\begin{Lemma}{\label{lemme thm B}}
	Let
	\begin{equation*}
		D = \begin{bmatrix}
		U_{1} & \cdots & U_{m_1} \\
		V_{1} & \cdots & V_{m_1}
	\end{bmatrix} \in \D_{\underline m}
	\end{equation*}
	with $U_{1},\dots,U_{m_1} \in \R^{n_1}$ and $V_{1},\dots,V_{m_1} \in \R^{n_2}$.
	There exist integers $r,s \geqslant 0$ and indices $i_1,\dots,i_s$, $j_{s+1},\dots,j_{m_1}$ satisfying
	\begin{equation*}
		D \sim \begin{bmatrix}
		0 & \cdots & 0 & U_{r+1} & \cdots & U_{s} & e_{j_{s+1}} & \cdots & e_{j_{m_1}} \\
		f_{i_1} & \cdots & f_{i_r} & f_{i_{r+1}} & \cdots & f_{i_s} & 0 & \cdots & 0
	\end{bmatrix}
	\end{equation*}
	with linearly independent $U_{r+1},\dots,U_s$.
\end{Lemma}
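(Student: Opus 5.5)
We work with the action on a representative
\begin{equation*}
	D = \begin{bmatrix} U \\ V \end{bmatrix}, \qquad U \in M_{n_1,m_1}(\R),\ V \in M_{n_2,m_1}(\R).
\end{equation*}
Here $B' = T(n_1,\R)\times T(n_2,\R)$ acts by $U \mapsto aU$, $V \mapsto bV$, while $GL(m_1,\R)$ acts by simultaneous column operations $(U,V)\mapsto (Ug^{-1},Vg^{-1})$.

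The plan is to first normalise the lower block $V$ with \Cref{lemme_fondamental}, and then normalise the part of $U$ sitting over the zero columns of $V$.

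\emph{Step 1.} Apply \Cref{lemme_fondamental} with $p = n_2$, $q = m_1$ to $V$, using $b \in T(n_2,\R)$ on the left and $g \in GL(m_1,\R)$ on the right. Applying the same $g$ to $U$, we get
\begin{equation*}
	D \sim \begin{bmatrix} U'_1 & \cdots & U'_s & U'_{s+1} & \cdots & U'_{m_1} \\ f_{i_1} & \cdots & f_{i_s} & 0 & \cdots & 0 \end{bmatrix}
\end{equation*}
with $s = \rank V$ and $i_1<\cdots<i_s$. Since $D$ has rank $m_1$, the vectors $U'_{s+1},\dots,U'_{m_1}$ are linearly independent.

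\emph{Step 2.} We normalise the last $m_1-s$ columns. Apply \Cref{lemme_fondamental} with $p=n_1$, $q=m_1-s$ to the block $(U'_{s+1}\ \cdots\ U'_{m_1})$, acting on the left by $a\in T(n_1,\R)$ and on the right by $GL(m_1-s,\R)$ embedded in the lower right corner of $GL(m_1,\R)$. This does not touch the $f$'s, since the corresponding $V$-columns vanish. Full rank forces the normal form to be $(e_{j_{s+1}}\ \cdots\ e_{j_{m_1}})$ with no zero columns. The element $a$ also transforms $U'_1,\dots,U'_s$, but they stay arbitrary vectors.

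\emph{Step 3.} We clear the first $s$ columns. Adding to column $t\le s$ linear combinations of the columns $t'>s$ (a legitimate column operation) kills the components of $U'_t$ along $e_{j_{s+1}},\dots,e_{j_{m_1}}$. So every $U'_t$ may be assumed to lie in the span of the remaining basis vectors, which gives $W \coloneq (U'_1\ \cdots\ U'_s)$.

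\emph{Step 4.} Some columns of $W$ are dependent, and we must separate those from an independent family without destroying the $f_{i_t}$. Process the columns $t = s, s-1, \dots, 1$ in decreasing order. Whenever $U'_t$ lies in the span of the columns $U'_{t'}$ with $t'<t$, subtract that combination from column $t$. This also adds a combination of $f_{i_{t'}}$ with $i_{t'}<i_t$ to $f_{i_t}$, and we restore $f_{i_t}$ using an upper-triangular $b\in T(n_2,\R)$. Adding multiples of rows $i_{t'}<i_t$ into row $i_{t'}$ direction is exactly what upper-triangular matrices allow. The one thing to check carefully is that $b$ acts on the whole $V$-block, so it must not spoil the earlier columns. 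I expect this to hold because $b = I + \sum c_{t'} E_{i_{t'},i_t}$ only affects vectors having an $i_t$-component, and only column $t$ has one.

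After the zeroed columns are moved to the front by a permutation of columns (which reorders the $i$'s; the lemma does not require monotonicity), the nonzero $U$'s are linearly independent. This gives the form
\begin{equation*}
	\begin{bmatrix} 0 & \cdots & 0 & U_{r+1} & \cdots & U_s & e_{j_{s+1}} & \cdots & e_{j_{m_1}} \\ f_{i_1} & \cdots & f_{i_r} & f_{i_{r+1}} & \cdots & f_{i_s} & 0 & \cdots & 0 \end{bmatrix}.
\end{equation*}

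The main obstacle is the bookkeeping in Step 4. Done greedily from the last column down, each surviving column is independent of all earlier survivors. I would verify by induction that the final family $U_{r+1},\dots,U_s$ is independent and that the corrections by $b$ remain upper triangular at each stage.
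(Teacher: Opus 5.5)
Your proof is correct and follows essentially the same route as the paper. You normalise $V$ with \Cref{lemme_fondamental}, normalise the $U$-columns sitting over the zero $V$-columns, and clear their $e_{j_t}$-components from the first $s$ columns. You then zero out the dependent $U$-columns by column operations and repair the $f_{i_t}$ with a unipotent upper triangular element of $B'$. Writing each dependent $U'_t$ in terms of columns $t'<t$ (so $i_{t'}<i_t$) is exactly what makes $I+\sum c_{t'}E_{i_{t'},i_t}$ upper triangular. The paper's text states this inequality the other way round ($i_{k_j}>i_q$), which appears to be a slip; your version is the one that works.
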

\begin{proof}
	First, we use the left $\T(n_2,\R)$-action and the right $GL(m_1,\R)$-action on the $V_1$ vectors to write, according to \Cref{lemme_fondamental}:
	\begin{equation*}
		D \sim \begin{bmatrix}
		U_{1} & \cdots & U_{s} & U_{s+1} & \cdots & U_{m_1} \\
		f_{i_1} & \cdots & f_{i_s} & 0 & \cdots & 0
	\end{bmatrix}
	\end{equation*}
	where $s = \rank(V_1,\dots,V_{m_1})$. Similarly, considering the left $\T(n_1,\R)$-action and the right $GL(m_1,\R)$-action, we obtain
	\begin{equation*}
		D \sim \begin{bmatrix}
		U_{1} & \cdots & U_{r} & e_{j_{r+1}} & \cdots & e_{j_{m_1}} \\
		f_{i_1} & \cdots & f_{i_r} & 0 & \cdots & 0
	\end{bmatrix}
	\end{equation*}
	because $D$ has dimension $m_1$. Using the right $GL(m_1,\R)$-action, we can also assume that $U_1,\dots,U_r$ all have zero $e_{j_{r+1}},\dots,e_{j_{m_1}}$ coordinates.\\
	Using Gaussian elimination, we may extract from $(U_1,\dots,U_s)$ a maximal linearly independent family $(U_{k_1},\dots,U_{k_p})$  satisfying the following property:
	\begin{equation*}
		\text{if } U_q = \sum_{j=1}^p x_{qj}U_{k_j} \text{ then } x_{qj} \neq 0 \implies k_{j} \leqslant i_q \leqslant i_j.
	\end{equation*}
%	{\color{red}(To obtain such a family, assume $i_1 > \cdots > i_s$ and apply the standard algorithm to extract a basis out of $(U_{1},\dots,U_r)$)} \\
	Then, for any $q \neq k_1,\dots,k_p$, we may replace the column
	\begin{equation*}
		\begin{pmatrix}
		U_q \\
		f_{i_q}
	\end{pmatrix} \hspace{3mm} \text{ by } \hspace{3mm} 
		\begin{pmatrix}
		0 \\
		f_{i_q} - \displaystyle\sum_{j=1}^p x_{qj}f_{i_{k_j}}
	\end{pmatrix}
	\end{equation*}
	using the right $GL(m_1,\R)$-action.
	Since $x_{qj} \neq 0$ implies $i_{k_j} > i_q$, we may then replace this column by $\,^t\begin{pmatrix}
		0 & f_{i_q}
	\end{pmatrix}$ using the left $B'$-action and the lemma follows.
\end{proof}

\begin{Lemma}{\label{lemme thm B bis}}
	For any $t \in \{0,\dots,s-r\}$, one has
	\begin{equation*}
		D \sim \begin{bmatrix}
		0 & \cdots & 0 & e_{j_{r+1}} & \cdots & e_{j_{r+t}} & U_{r+t+1} & \cdots & U_{s} & e_{j_{s+1}} & \cdots & e_{j_{m_1}} \\
		f_{i_1} & \cdots & f_{i_r} & f_{i_{r+1}} & \cdots & f_{i_{r+t}} & f_{i_{r+t+1}} & \cdots & f_{i_s} & 0 & \cdots & 0
	\end{bmatrix}.
	\end{equation*}
\end{Lemma}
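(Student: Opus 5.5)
We argue by finite induction on $t$. The case $t=0$ is exactly \Cref{lemme thm B}: $U_{r+1},\dots,U_s$ are linearly independent and the columns $s+1,\dots,m_1$ are $e_{j_{s+1}},\dots,e_{j_{m_1}}$. We may also assume, using the right $GL(m_1,\R)$-action, that $U_{r+1},\dots,U_s$ have zero $e_{j_{s+1}},\dots,e_{j_{m_1}}$ coordinates. Throughout, the lower parts $f_{i_1},\dots,f_{i_s}$, the zero upper parts of the first $r$ columns and the last $m_1-s$ columns are to be kept unchanged, so that only the block $(U_{r+t+1},\dots,U_s)$ is modified.

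For the inductive step, assume $D$ has the displayed form for some $t<s-r$. We choose the pivot for column $r+t+1$ by a column-echelon procedure in the $\R^{n_1}$-part that is compatible with $\T(n_1,\R)$. Set
\begin{equation*}
	j_{r+t+1} = \max\{i \mid \text{the $i$-th coordinate of some } U_q,\ r+t+1 \leqslant q \leqslant s, \text{ is nonzero}\}.
\end{equation*}
This is well defined because the $U_q$ are linearly independent, hence nonzero. After reordering we may take $q=r+t+1$.

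The pivoting then proceeds in four moves.
\begin{enumerate}[label = \arabic*)]
	\item Left multiplication by a diagonal-plus-upper element of $\T(n_1,\R)$ turns $U_{r+t+1}$ into $e_{j_{r+t+1}}$. Row operations that add a multiple of row $j_{r+t+1}$ to rows of smaller index are upper triangular, so they clear the coordinates of $U_{r+t+1}$ above the pivot; the coordinates below vanish by maximality of $j_{r+t+1}$.
	\item These row operations also alter the other $U_q$, but they do not touch the columns already equal to $e_{j_{r+1}},\dots,e_{j_{r+t}}$ nor $e_{j_{s+1}},\dots,e_{j_{m_1}}$, since those have zero entry at $j_{r+t+1}$. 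The index $j_{r+t+1}$ differs from the previous pivots because the $U_q$ with $q\geqslant r+t+1$ have zero coordinates there (this is kept as an invariant of the induction).
	\item Scaling row $j_{r+t+1}$ would alter nothing else. The entry in the $U$-block is no longer literally zero elsewhere, but this is harmless: the remaining $U_q$ ($q>r+t+1$) have their $j_{r+t+1}$-coordinates removed by subtracting multiples of column $r+t+1$ via the right $GL(m_1,\R)$-action.
	\item The subtraction in 3) produces in the lower part $f_{i_q}-c\,f_{i_{r+t+1}}$. This error is removed by the left $\T(n_2,\R)$-action provided $i_{r+t+1}>i_q$, mirroring the end of the proof of \Cref{lemme thm B}. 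Otherwise we instead add the correction into column $r+t+1$ by the symmetric manoeuvre: we first reorder so that the pivot column is the one with the largest $i$ among those attaining the maximal $j$.
\end{enumerate}

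The main obstacle is move 4: ensuring that every $f$-correction created by column operations can be absorbed by $\T(n_2,\R)$, which only raises indices. For this we use the ordering freedom and, if necessary, the structure of the $x_{qj}$ from the proof of \Cref{lemme thm B}. The plan is to pick, among the $U_q$ achieving the maximal row index, the column with the largest $f$-index as pivot, and to eliminate the remaining entries in that row purely by row operations in $\T(n_1,\R)$ whenever the column order is unfavorable. Finally, the invariant that the new pivot row is zero in all other columns is preserved, closing the induction. At $t=s-r$ this yields the normal form of \Cref{Theorem B'}.
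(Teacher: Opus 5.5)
Your skeleton matches the paper's: induction on $t$, a pivot in the largest row $m$ met by the remaining $U_q$, column operations to kill the other entries of row $m$, then a repair of the lower parts with $\T(n_2,\R)$. But the one decision that matters, the choice of pivot column in move 4, is made the wrong way round, so the inductive step fails.

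Write $(u;v)$ for a column with upper part $u$ and lower part $v$, and $E_{k,l}$ for the elementary matrix with a $1$ in position $(k,l)$. After subtracting $c$ times the pivot column from column $q$, the lower part is $f_{i_q}-c\,f_{i_{\mathrm{piv}}}$. To bring it back to $f_{i_q}$ while fixing $f_{i_{\mathrm{piv}}}$ and the other $f_{i_{q'}}$, you need $I+c\,E_{i_{\mathrm{piv}},i_q}$. That matrix is upper triangular only when $i_{\mathrm{piv}}<i_q$.

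So your condition ``$i_{r+t+1}>i_q$'' is reversed. The end of the proof of \Cref{lemme thm B}, which you mirror, also needs this direction: the retained columns must carry the smaller $f$-indices, whatever inequality is printed there. Choosing the pivot with the \emph{largest} $f$-index among the columns meeting row $m$ therefore puts you exactly in the bad case.

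Neither fallback helps. A row operation in $\T(n_1,\R)$ can change row $m$ only by adding rows of index $k>m$. Those rows vanish on every $U_q$, $q\geqslant r+t+1$, by maximality of $m$, so they cannot clear row $m$. The ``symmetric manoeuvre'' is not a specified operation.

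Your rule actually produces wrong answers. Take $n_1=n_2=m_1=2$ and $D=[\,(e_2;f_1)\ \ (e_1+e_2;f_2)\,]$. This is already in the form of \Cref{lemme thm B} with $r=0$, $s=2$ and independent $U$'s. Your rule pivots on the second column and would lead to $[\,(e_1;f_1)\ \ (e_2;f_2)\,]$. But the $B'$-invariant $\rank_{\{2,4\}}$ equals $2$ on $D$ and $1$ on that flag, so the two lie in different orbits.

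The paper does the opposite. It takes $j_0$ to be the \emph{smallest} column index in $\{r+t+1,\dots,s\}$ whose $U_{j_0}$ has a nonzero $m$-th entry. Since $i_1<\dots<i_s$ (from \Cref{lemme_fondamental}), every other column $p$ meeting row $m$ has $i_p>i_{j_0}$; the paper's ``since $j_0\leqslant i_p$'' is to be read this way. Each correction $f_{i_p}-y\,f_{i_{j_0}}$ is then removed by $I+y\,E_{i_{j_0},i_p}\in\T(n_2,\R)$. These elementary matrices do not disturb the other lower parts, because the $f_{i}$ are distinct basis vectors.

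In the example, subtract the first column from the second and apply $I+E_{1,2}$ in the $\T(n_2,\R)$ block; this gives $D\sim[\,(e_2;f_1)\ \ (e_1;f_2)\,]$. Once you pivot on the column with the smallest $f$-index, your moves 1--3 go through as you describe.
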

\begin{proof}
	We proceed by finite induction on $t$. The initial case $t = 0$ has already been treated in \Cref{lemme thm B}.
	
	Assume that
	\begin{equation*}
		D \sim \begin{bmatrix}
		0 & \cdots & 0 & e_{j_{r+1}} & \cdots & e_{j_{r+t}} & U_{r+t+1} & \cdots & U_{s} & e_{j_{s+1}} & \cdots & e_{j_{m_1}} \\
		f_{i_1} & \cdots & f_{i_r} & f_{i_{r+1}} & \cdots & f_{i_{r+t}} & f_{i_{r+t+1}} & \cdots & f_{i_s} & 0 & \cdots & 0
	\end{bmatrix}
	\end{equation*}
	for some $t \in \{0,\dots,s-r-1\}$ and define
	\begin{equation*}
		m \coloneq \max \{k \mid 1 \leqslant k \leqslant n-1, \, \text{ some } U_j \text{ has a non-zero } e_k \text{ component}\},
	\end{equation*}
	\begin{equation*}
		j_0 \coloneq \min \{j \mid r+t+1 \leqslant j \leqslant s, \, U_{j} \text{ has a non-zero } e_m \text{ component}\}.
	\end{equation*}
	Then, one has
	\begin{equation*}
		U_{j_0} = \,^t \begin{pmatrix}
		\ast & \cdots & \ast & x & 0 & \cdots & 0
	\end{pmatrix}
	\end{equation*}
	with non-zero $m$-th entry $x$. Using the left $B'$-action, we may replace $U_{j_0}$ by $e_{m}$. If some $U_{p}$ has a non-zero $m$-th entry $y$, we may replace
	\begin{equation*}
		\begin{pmatrix}
		U_p \\
		f_{i_p}
	\end{pmatrix} \hspace{3mm} \text{ by } \hspace{3mm} 
		\begin{pmatrix}
		U_p-ye_m \\
		f_{i_p}-yf_{i_{j_0}}
	\end{pmatrix}
	\end{equation*}
	so that $U_p-ye_m$ has zero $m$-th entry. Since $j_0 \leqslant i_p$, we may use the left $B'$-action to replace this column by $\,^t \begin{pmatrix}
		U_p-ye_m & f_{i_p}
	\end{pmatrix}$.
	If we define $j_{r+t+1} \coloneq m$, $i_{r+t+1} \coloneq i_{j_0}$ and denote by $U_p$ the column vector $U_p-ye_m$, we obtain
	\begin{equation*}
		D \sim \begin{bmatrix}
		0 & \cdots & 0 & e_{j_{r+1}} & \cdots & e_{j_{r+t+1}} & U_{r+t+2} & \cdots & U_{s} & e_{j_{s+1}} & \cdots & e_{j_{m_1}} \\
		f_{i_1} & \cdots & f_{i_r} & f_{i_{r+1}} & \cdots & f_{i_{r+t+1}} & f_{i_{r+t+2}} & \cdots & f_{i_s} & 0 & \cdots & 0
	\end{bmatrix}
	\end{equation*}
	and the lemma is proved.
\end{proof}

\begin{Proposition}{\label{normal form exist}}
	For any flag $D \in \D_{\underline m}$, there exists $r,s \geqslant 0$ and pairwise distinct indices $i_1, \dots, i_{s}$, $j_{r+1}, \dots, j_{m_1}$ such that
	\begin{equation*}
		D \sim \begin{bmatrix}
		0 & \cdots & 0  & e_{j_{r+1}} & \cdots & e_{j_{s}} & e_{j_{s+1}} & \cdots & e_{j_{m_1}} \\
		f_{i_1} & \cdots & f_{i_{r}}  & f_{i_{r+1}} & \cdots & f_{i_s} & 0 & \cdots & 0
	\end{bmatrix}
	\end{equation*}
\end{Proposition}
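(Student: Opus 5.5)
The proposition should follow by combining \Cref{lemme thm B} and \Cref{lemme thm B bis}, and then checking that the indices obtained are pairwise distinct.

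First, \Cref{lemme thm B} puts $D$ in the intermediate form
\begin{equation*}
	D \sim \begin{bmatrix}
	0 & \cdots & 0 & U_{r+1} & \cdots & U_{s} & e_{j_{s+1}} & \cdots & e_{j_{m_1}} \\
	f_{i_1} & \cdots & f_{i_r} & f_{i_{r+1}} & \cdots & f_{i_s} & 0 & \cdots & 0
	\end{bmatrix}
\end{equation*}
with $U_{r+1},\dots,U_s$ linearly independent. Then apply \Cref{lemme thm B bis} with $t = s-r$, which replaces every $U_{r+p}$ by a basis vector $e_{j_{r+p}}$. This gives exactly the displayed shape of the proposition.

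It remains to justify that the indices are pairwise distinct.

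For the $f$-indices: by \Cref{lemme_fondamental} the $i_1,\dots,i_s$ are produced strictly increasing. The only later operations on the $f$-part are the replacements in the proofs of \Cref{lemme thm B} and \Cref{lemme thm B bis}, where a column $f_{i_q}-\sum x_{qj}f_{i_{k_j}}$ is brought back to $f_{i_q}$ by $B'$. So the multiset of $f$-indices is unchanged, and they stay distinct.

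For the $e$-indices, I would argue by rank. The lower block $(f_{i_1},\dots,f_{i_s},0,\dots,0)$ has rank $s$, and the whole matrix has rank $m_1$. Consequently the vectors $e_{j_{r+1}},\dots,e_{j_{m_1}}$ must be linearly independent, for otherwise a column combination would produce a nonzero vector in the span with zero $e$-part beyond the first $r$ columns, contradicting the rank count. More carefully:
\begin{itemize}[label = --]
	\item $e_{j_{s+1}},\dots,e_{j_{m_1}}$ are distinct by \Cref{lemme_fondamental}.
	\item The $U$'s were cleared of the coordinates $e_{j_{s+1}},\dots,e_{j_{m_1}}$ in \Cref{lemme thm B}.
	\item In \Cref{lemme thm B bis}, each new $j_{r+t+1}=m$ is chosen after the $m$-th coordinate has been eliminated from the other remaining $U_p$'s.
\end{itemize}
Hence no two $e$-indices coincide.

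The main obstacle is this last bookkeeping step. One must check that the elimination in \Cref{lemme thm B bis} does not reintroduce earlier $e_{j}$ coordinates, and that the left $B'$-operations used are genuinely upper triangular. The latter holds because $m$ is chosen maximal, so subtracting multiples of $e_m$ only affects coordinates with index at most $m$. I would handle this by an induction invariant: at stage $t$, the remaining $U_p$ have zero coordinates at $j_{r+1},\dots,j_{r+t}$ and at $j_{s+1},\dots,j_{m_1}$. Preserving this invariant requires clearing the $e_m$ coordinate with the right $GL(m_1,\R)$-action, which never touches the fixed basis columns.
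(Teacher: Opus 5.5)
Your proposal follows the paper, whose proof is only the appeal to \Cref{lemme thm B bis} with $t=s-r$. Your induction invariant (the remaining $U_p$ vanish at every already-fixed $e$-coordinate) correctly supplies the distinctness the paper leaves implicit, and it also survives the left normalisation $U_{j_0}\mapsto e_m$, since that triangular matrix only involves coordinates where $U_{j_0}$ is nonzero.

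Drop your preliminary rank remark, though. Full rank $m_1$ of the whole matrix and rank $s$ of the lower block do not exclude repeated $e$'s: two columns with upper parts $e_1,e_1$ and lower parts $f_1,f_2$ are already independent. The valid rank argument is that the rank of the upper block is an orbit invariant and equals $m_1-r$ at the output of \Cref{lemme thm B}, which is exactly its number of nonzero columns.
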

\begin{proof}
	This follows from \Cref{lemme thm B bis} for $t = s-r$.
\end{proof}

\begin{Theorem}{\label{normal form unique}}
	The flags described in \Cref{normal form exist} have different invariant ranks. In particular, the normal form of \Cref{normal form exist} is unique and the map
	\begin{equation*}
		\Psi : B'\backslash G/P_{\underline m} \to \displaystyle \prod_{L \in \mathcal Q_{B'}, H \in \mathcal Q_{P_{\underline m}}} L\backslash G/H
	\end{equation*}
	is injective.
\end{Theorem}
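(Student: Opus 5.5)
We want to show that the invariant ranks $\rank_J$, $J\in\mathcal J$, recover every piece of data in a normal form
\begin{equation*}
	D=\begin{bmatrix}
	0 & \cdots & 0  & e_{j_{r+1}} & \cdots & e_{j_{s}} & e_{j_{s+1}} & \cdots & e_{j_{m_1}} \\
	f_{i_1} & \cdots & f_{i_{r}}  & f_{i_{r+1}} & \cdots & f_{i_s} & 0 & \cdots & 0
	\end{bmatrix}.
\end{equation*}
After the ordering conventions of \Cref{Theorem B'}, the normal form is described by three pieces of data: the set $I_0=\{i_1,\dots,i_r\}$, the set $J_\infty=\{j_{s+1},\dots,j_{m_1}\}$, and the partial bijection $\pi\colon i_t\mapsto j_t$ for $r<t\leqslant s$. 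We read off the rank of the rows $J=\{a,\dots,n_1\}\cup\{n_1+b,\dots,n\}$ directly, since $D$ is a $0/1$ matrix whose columns have at most two nonzero entries. Each column contributes as follows:
\begin{itemize}[label = --]
	\item A column $f_{i_t}$ with $t\leqslant r$ contributes $1$ exactly when $i_t\geqslant b$.
	\item A column $e_{j_t}$ with $t>s$ contributes $1$ exactly when $j_t\geqslant a$.
	\item A mixed column $e_{j_t}+f_{i_t}$ contributes $1$ exactly when $j_t\geqslant a$ or $i_t\geqslant b$.
\end{itemize}
All the $e$'s and $f$'s involved are distinct basis vectors, so the nonzero restricted columns are linearly independent. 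This yields the closed formula
\begin{equation*}
\rank_J(D)=\#\{t\leqslant r: i_t\geqslant b\}+\#\{t>s: j_t\geqslant a\}+\#\{r<t\leqslant s: j_t\geqslant a \text{ or } i_t\geqslant b\}.
\end{equation*}
The analogous formulas for the pure sets $J=\{a,\dots,n_1\}$ and $J=\{n_1+b,\dots,n\}$ are obtained by setting $b=n_2+1$ or $a=n_1+1$, respectively.

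We then invert these formulas. Write $\beta(b)=\rank_{\{n_1+b,\dots,n\}}(D)$ and $\alpha(a)=\rank_{\{a,\dots,n_1\}}(D)$. The jumps of $\beta$ give the set $I=\{i_1,\dots,i_s\}$, and the jumps of $\alpha$ give the set $\{j_{r+1},\dots,j_{m_1}\}$. By inclusion–exclusion,
\begin{equation*}
N(a,b):=\alpha(a)+\beta(b)-\rank_{\{a,\dots,n_1\}\cup\{n_1+b,\dots,n\}}(D)=\#\{r<t\leqslant s: j_t\geqslant a,\ i_t\geqslant b\}.
\end{equation*}
This counting function of the point set $\{(j_t,i_t)\}_{r<t\leqslant s}$ over upper-right quadrants determines the point set completely, by a second finite difference in $a$ and $b$. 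Hence $\pi$ is recovered. The projections of this point set then give $\{i_{r+1},\dots,i_s\}$ and $\{j_{r+1},\dots,j_s\}$. Subtracting these from $I$ and from $\{j_{r+1},\dots,j_{m_1}\}$ gives $I_0$ and $J_\infty$.

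Consequently two normal forms with equal invariant ranks coincide up to the permitted reorderings. Since ranks are constant on $B'$-orbits, the normal form of \Cref{normal form exist} is unique. Now suppose $\Psi(B'gP_{\underline m})=\Psi(B'g'P_{\underline m})$. Then $LgH=Lg'H$ for all $L=P_J\in\mathcal Q_{B'}$ and $H=P_{\underline m}$, so all $\rank_J$ agree by \Cref{lemme_rank}, and thus the two double cosets are equal. The only delicate step is the independence claim behind the rank formula, which holds because the basis vectors involved are distinct.
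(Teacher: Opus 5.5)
Your proposal is correct and follows essentially the same route as the paper: you recover the index sets from the jumps of the single-block ranks $\rank_{\{n_1+p,\dots,n\}}$ and $\rank_{\{p,\dots,n_1\}}$, then detect the mixed columns from finite differences of the ranks $\rank_{\{a,\dots,n_1\}\cup\{n_1+b,\dots,n\}}$. Your inclusion--exclusion function $N(a,b)$ and its second finite difference repackage the paper's pair of first-difference conditions (taken at $i$ and $i+1$), and your explicit rank formula, with its linear-independence justification, makes the verification a bit more transparent than the paper's bare statement of the criteria.
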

% \begin{Proposition}
% 	The normal form described in \Cref{normal form exist} is unique up to permutation of columns.
% \end{Proposition}
\begin{proof}
	Let us consider a flag $D \in \D_{\underline m}$ described as
	\begin{equation*}
		D = \begin{bmatrix}
		0 & \cdots & 0  & e_{j_{r+1}} & \cdots & e_{j_{s}} & e_{j_{s+1}} & \cdots & e_{j_{m_1}} \\
		f_{i_1} & \cdots & f_{i_{r}}  & f_{i_{r+1}} & \cdots & f_{i_s} & 0 & \cdots & 0
	\end{bmatrix}.
	\end{equation*}
	First, note that 
	\begin{equation*}
		s = \rank_{\{n_1+1,\dots,n\}}D, \hspace{3mm} r = m_1-\rank_{\{1,\dots,n_1\}}D.
	\end{equation*}
	Next, the indices $i_1,\dots,i_s$ are obtained as the integers $p \in \{1,\dots,n_2\}$ satisfying
	\begin{equation*}
		\rank_{\{n_1+p,\dots,n\}}D - \rank_{\{n_1+p+1,\dots,n\}}D = 1
	\end{equation*}
	and the indices $j_{{r}+1},\dots,j_{m_1}$ are obtained as the integers $p \in \{1,\dots,n_1\}$ satisfying
	\begin{equation*}
		\rank_{\{p,\dots,n_1\}}D - \rank_{\{p+1,\dots,n_1\}}D = 1.
	\end{equation*}
	Finally, the column $\,^t\begin{pmatrix}
		e_j & f_i
	\end{pmatrix}$
	appears in the normal form if and only if
	\begin{equation*}
		\begin{cases}
		\rank_{\{j,\dots,n_1\}\cup \{n_1+i,\dots,n\}}D - \rank_{\{j+1,\dots,n_1\} \cup \{n_1+i,\dots,n\}}D = 0, \\
		\rank_{\{j,\dots,n_1\}\cup \{n_1+i+1,\dots,n\}}D - \rank_{\{j+1,\dots,n_1\} \cup \{n_1+i+1,\dots,n\}}D = 1.
	\end{cases}
	\end{equation*}
	The theorem follows from these conditions and the fact that these rank functions are $B'$-invariant.
\end{proof}

% \begin{Theorem}
% 	The map $$\begin{array}{ccc}
% 		B'\backslash G/P_{\underline m} & \to & \displaystyle \prod_{Q \in \mathcal Q'} Q\backslash G/P_{\underline m}
% 	\end{array}$$
% 	is injective and induces a poset isomorphism onto its image.
% \end{Theorem}
% \begin{proof}
% 	According to \Cref{lemme_rank}, it suffices to prove that 
% 	\begin{enumerate}[label = (\roman*)]
% 		\item any orbit is determined by its ranks,
% 		\item $X_1 \succ X_2 \iff \forall J \in \mathcal J, \, \rank_{J}X_1 \leqslant \rank_{J}X_2$.
% 	\end{enumerate}
% 	Property $(i)$ amounts to the proof of \Cref{normal form unique} and the direct implication of $(ii)$ is automatic since our rank maps are lower semi-continuous.
	
% 	\noindent Let $X_1,X_2 \in B' \backslash G /P$ be orbits satisfying
% 	$$\forall J \in \mathcal J, \, \rank_{J}X_1 \leqslant \rank_{J}X_2.$$
	
% 	{\color{red} Finir preuve (ii).}
% \end{proof}

\subsection[Case III')]{Case $\mathrm{III}')$} \label{cas 3'}

We let $\underline m = (m_1,\dots,m_l)$ be a partition of $n$ and
\begin{equation*}
	B' = \left\{\begin{pmatrix}
	a & \\
	& 1
\end{pmatrix} \mid a \in \T(n-1,\R)\right\} \subset GL(n,\R).
\end{equation*}
Following the $(GL(n,\R)/P_{\underline n})$-realization of $\D_{\underline n}$ from \Cref{flag varieties}, we will write any $D \in G/P_{\underline m}$ as
\begin{equation*}
	D = \left[\begin{tabular}{c|c|c}
	$A_1$ & $\cdots$ & $A_l$
\end{tabular}\right].
\end{equation*}
Moreover, we decompose each $A_i \in M_{n,m_i}(\R)$ as
\begin{equation*}
	A_i = \begin{bmatrix}
	B_{i,1} & \cdots & B_{i,m_i} \\
	a_{i,1} & \cdots & a_{i,m_i} 
\end{bmatrix}
\end{equation*} 
for some $a_{i,1},\dots,a_{i,m_i} \in \R$, $B_{i,1},\dots,B_{i,m_i} \in \M_{n-1,1}(\R)$.

In this setting, $\rank_{J,s}$ is $B'$-invariant if and only if
\begin{equation*}
	J \in \mathcal J \coloneq \Big\{\{i,\dots,n-1\} \mid 1 \leqslant i \leqslant n-1\Big\} \cup \Big\{\{i,\dots,n\} \mid 1 \leqslant i \leqslant n\Big\}.
\end{equation*}

\begin{Remark}
	Assume $P_{\underline m}$ is minimal, i.e. $P_{\underline m}$ is the Borel subgroup of $GL(n,\R)$. If $J \in \mathcal J$, define the map
	\begin{equation*}
		{\det}_J : \begin{array}{ccc}
		GL(n,\R) & \to & \R \\
		(a_{ij}) & \mapsto & \det((a_{ij})_{i \in J, n-|J|+1 \leqslant j \leqslant n})
	\end{array}.
	\end{equation*}
	The right $P_{\underline m}$-equivariance and left $B'$-equivariance of the maps $(\Phi_J)_{J \in \mathcal J}$ have been used in \cite{Frahm_2023}, \cite{DitlevsenFrahm} and \cite{DitlevsenLabriet} to both construct and study symmetry breaking operators for to the pair $(GL(n+1,\R),GL(n,\R))$
\end{Remark}

\begin{Proposition}{\label{theorem B proof exist}}
	Let $(e_1,\dots,e_{n-1})$ be the canonical basis of $\R^{n-1}$. Then, any $B'$-orbit on $\mathcal D_{\underline{m}} \simeq G/P_{\underline m}$ admits a representative given by
	\begin{equation*}
		A = \begin{bmatrix}
		A_1 & \cdots & A_{l}
	\end{bmatrix}
	\end{equation*}
	where each block $A_j$ is of the form
	\begin{equation}\label{eqn:normalform1proof}
		\begin{pmatrix}
			e_{i_{j,1}} & \dots & e_{i_{j,m_j}} \\
			0 & \dots & 0
		\end{pmatrix}
	\end{equation}
	except exactly one of them which is of the form
	\begin{equation}\label{eqn:normalform2proof}
		\begin{pmatrix}
			\displaystyle \sum_{s=1}^k e_{i_s} & e_{i_{j,2}} & \dots & e_{i_{j,m_j}} \\
			1 & 0 & \cdots & 0
		\end{pmatrix}
	\end{equation}
	for some $k \in \{0,\dots,l-1\}$. Moreover, it satisfies the following conditions
	\begin{itemize}[leftmargin=*, label = --]
		\item all $e_{i_{j,j'}}$ involved are distinct,
		\item if the block $A_{j_0}$ is of the form \eqref{eqn:normalform2proof}, then there exists an increasing sequence $j_0 < j_1 < \cdots < j_k$ such that $e_{i_s}$ appears in the block $A_{j_s}$ for all $s \in \{1,\dots,k\}$.
	\end{itemize}
\end{Proposition}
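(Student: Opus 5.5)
We reduce an arbitrary flag to the normal form by a Gauss--Jordan type elimination, working with the realization $D=[A_1|\cdots|A_l]$ of $G/P_{\underline m}$. The available moves are: left multiplication by $B'$, which acts by $\T(n-1,\R)$ on the first $n-1$ rows and fixes the last row; arbitrary invertible column operations inside each block $A_i$; and addition of columns of a block $A_i$ to columns of a later block $A_j$, $i<j$.

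\textbf{Step 1 (locating the last row).} Let $j_0$ be the smallest index such that the last row of $A_{j_0}$ is nonzero; it exists since $A$ is invertible. Using column operations in $A_{j_0}$, we arrange that $a_{j_0,1}=1$ and $a_{j_0,q}=0$ for $q\geq 2$. Subtracting multiples of this first column of $A_{j_0}$ from the columns of later blocks kills the last row of every block $A_j$, $j>j_0$. Blocks before $j_0$ already have zero last row. Hence the last row of $A$ is $(0,\dots,0,1,0,\dots,0)$, and all remaining columns lie in $\R^{n-1}\oplus 0$.

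\textbf{Step 2 (the columns with zero last row).} These $n-1$ columns $B_{i,q}$ are linearly independent, and they span $\R^{n-1}$ once the special column $v=B_{j_0,1}$ is set aside. They are acted on by $\T(n-1,\R)$ on the left and by the block-unipotent column operations among themselves. This is exactly the Bruhat situation for the flag of type $\underline m'$ in $\R^{n-1}$ obtained by deleting one slot from block $j_0$. Running the elimination of \Cref{lemme_fondamental} block by block from the first to the last block, the blocks reduce to distinct basis vectors $e_{i_{j,j'}}$, as in the usual Bruhat normal form. At each step we process the pivot row from the bottom and clear it with $\T(n-1,\R)$ row operations, which only add lower-indexed rows to higher ones, together with column operations into later blocks. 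The special column $v$ is modified along the way. It may also be altered by subtracting from it any column of the blocks $A_i$ with $i\leq j_0$, using the in-block operations for $i=j_0$ and the forward operations for $i<j_0$.

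\textbf{Step 3 (normalizing $v$; the main obstacle).} We first subtract from $v$ all its components along the $e_{i_{j,j'}}$ occurring in blocks $j\leq j_0$. Then $v=\sum_s c_s e_{i_s}$, with each $e_{i_s}$ coming from a block strictly after $j_0$.

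Next we must reduce $v$ to a sum of at most one basis vector per block, with a strictly increasing chain of blocks $j_1<\cdots<j_k$. The tool is row operations from $\T(n-1,\R)$ that add a multiple of row $a$ to row $b<a$; to keep the other columns as standard basis vectors, we compensate with column operations going from an earlier block to a later one. Concretely, suppose $e_{i_s}$ and $e_{i_t}$ both appear in $v$, with $e_{i_s}$ in block $j$ and $e_{i_t}$ in block $j'\geq j$. The row operation that adds row $i_s$ to row $i_t$ is admissible when $i_t<i_s$. It produces a parasite in the column $e_{i_s}$, and this parasite can be removed by a column operation only if $j\leq j'$ and we are in the correct direction; otherwise we use the opposite pairing.

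The plan is to order the support of $v$ by block index and, within a block, by row index, and to eliminate greedily so that within each block only one vector survives, namely the one of largest row index. The condition among blocks is then the increasing chain condition. If a pair cannot be eliminated, they must lie in distinct blocks, which is what forces the chain structure. Finally, diagonal elements of $\T(n-1,\R)$ rescale each surviving $c_s$ to $1$, while the basis vector columns are rescaled back by in-block scalars. The number of surviving terms satisfies $k\leq l-j_0\leq l-1$.

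The bookkeeping of which compensating column operations are legal (earlier block to later block only) is the delicate point. If the greedy choice fails, I would instead prove the claim by induction on $l$: treat the last block separately, and use the case $l=2$ from \Cref{normal form exist}-type arguments as the base.
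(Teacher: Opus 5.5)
Your proof is correct and follows the paper's route. Steps 1--2 and the first half of Step 3 are exactly the paper's argument: choose $j_0$, normalize the last row with $P_{\underline m}$, apply a Gauss--Jordan/Bruhat reduction to the remaining $n-1$ columns, clear the components of $v$ along vectors of blocks $j\leqslant j_0$, and rescale at the end. The one real difference is in your favour. The paper goes directly from ``$v$ is supported on vectors of blocks $j>j_0$'' to the rescaling and never shows that these vectors lie in pairwise distinct blocks, so neither $k\leqslant l-1$ nor the chain $j_0<j_1<\cdots<j_k$ is justified there. Your within-block elimination supplies exactly this missing step: for $a<b$ in the same block, add a multiple of row $b$ to row $a$ to kill $v_a$, then subtract the same multiple of the column $e_a$ from the column $e_b$ inside that block. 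That is all the statement needs, since its chain condition only asks for distinct blocks after $j_0$.

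Your general rule for pairs, however, is backwards, and you should correct it rather than fall back on an induction. Take $e_{i_s}$ in block $j$, $e_{i_t}$ in block $j'\geqslant j$, and $i_t<i_s$. The parasite then sits in the column $e_{i_s}$ of block $j$, and removing it means subtracting a column of block $j'$ from it. That is legal only if $j'\leqslant j$, i.e.\ only when $j=j'$. There is also no ``opposite pairing'', since adding row $i_t$ to row $i_s>i_t$ is not an upper triangular operation. The correct criterion is: $e_a$ can be removed from $v$ using $e_b$ whenever $a<b$ and $e_a$ lies in the same block as $e_b$ or an earlier one. Applying it across blocks as well gives a sharper normal form, in which the row indices strictly decrease along $j_1<\cdots<j_k$. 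For example, for $n=3$ with $P_{\underline m}$ minimal, the open orbit has $v=e_1+e_2$ with $e_2$ in block $2$ and $e_1$ in block $3$, and it cannot be reduced further. This sharper form is what uniqueness and the orbit count require, but the present existence statement does not need it.
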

\begin{proof}
	Let $D \in \D_{\underline m}$ be given as $D = \left[\begin{tabular}{c|c|c}
		$A_1$ & $\cdots$ & $A_l$
	\end{tabular}\right]$ with each $A_i \in M_{n,m_i}(\R)$ being of the form
	\begin{equation*}
		A_i = \begin{bmatrix}
		B_{i,1} & \cdots & B_{i,m_i}\\
		a_{i,1} & \cdots & a_{i,m_i} 
	\end{bmatrix}.
	\end{equation*}
	Since the matrix $\begin{pmatrix}
		A_1 & \cdots & A_l
	\end{pmatrix} \in M_n(\R)$ has rank $n$, there exists $j \in \{1,\dots,A_l\}$, $i \in \{1,\dots,m_j\}$ such that $a_{j,i} \neq 0$ and we can set
	\begin{equation*}
		j_0 = \min\{j \in \{1,\dots,n\} \mid \exists i \in \{1,\dots,m_j\}, \, a_{j,i} \neq 0\}.
	\end{equation*}
	Using the right $P_{\underline m}$-action, we can make sure that $a_{j_0,1} = 1$ and that $a_{j,i} = 0$ if $(j,i) \neq (j_0,1)$.
	Applying the standard Gauss-Jordan elimination process (without row permutations) on the rectangular matrix obtained from $
		\begin{pmatrix}
		A_1 & \cdots & A_l
	\end{pmatrix}$ by removing the last row and the column $\,^t\begin{pmatrix}
		1 & B_{j_0,1}
	\end{pmatrix}$,
	we see that the $B'$-orbit of $D$ admits a representative with blocks $(A_j)_{j=1,\cdots,l}$ of the form
	\begin{equation*}
		\begin{pmatrix}
		e_{i_{j,1}} & \dots & e_{i_{j,m_j}} \\
		0 & \dots & 0
	\end{pmatrix}
	\end{equation*}
	except exactly one of them which is of the form
	\begin{equation*}
		\begin{pmatrix}
		v & e_{i_{j,2}} & \dots & e_{i_{j,m_j}} \\
		1 & 0 & \cdots & 0
	\end{pmatrix}
	\end{equation*}
	for some $v \in \R^{n-1}$, where all basis vectors $e_{i_{j,j'}}$ are necessarily distinct as the full matrix has rank $n$. Using the right $P_{\underline m}$-action, we can make sure that $v$ has zero $e_{i_{j,j'}}$-coordinate if $j \leqslant j_0$. Let $i_1<\cdots<i_k$ be the integers $p$ satisfying
	\begin{equation*}
		v \text{ has a non-zero } e_{p}\text{-coordinate } v_p, \hspace{1cm} \exists (j,j'), \, j_0 < j \text{ and } p = i_{j,j'}.
	\end{equation*}
	For each such integer $i_s$, we can first use the right $P_{\underline m}$-action to multiply each column $\,^t\begin{pmatrix}
		e_{i_s} & 0
	\end{pmatrix}$
	by the $e_{i_s}$-component $v_{i_s}$ and then use the left $B'$-action to multiply the $i_s$-th row by $v_{i_s}^{-1}$. Consequently $v$ transforms into $\sum_{s=1}^k e_{i_s}$	while the other columns are kept unchanged.
	The desired normal form is thus obtained.
\end{proof}

\begin{Proposition}
	In the setting of \Cref{theorem B proof exist}, the normal form is unique up to permutations of the $e_{j,j'}$ vectors in each block $A_j$.
\end{Proposition}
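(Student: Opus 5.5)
We prove uniqueness by recovering every piece of data in the normal form of \Cref{theorem B proof exist} from the $B'$-invariant ranks $\rank_{J,s}$ with $J \in \mathcal J$. As in the proof of \Cref{normal form unique}, this also shows that $\Psi$ is injective in case $\mathrm{III}')$ with $(n_1,n_2) = (n-1,1)$. Fix a normal form $A = [A_1 | \cdots | A_l]$ and write $D_s$ for its projection onto the Grassmannian of $(m_1+\cdots+m_s)$-planes. Note that $D_s$ is spanned by all columns of $A_1,\dots,A_s$.

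\textbf{Recovering $j_0$ and the unordered sets of basis vectors.} The block $A_{j_0}$ is the block containing the entry $1$ in the last row. It is characterized as the smallest $s$ with $\rank_{\{n\},s}(D) = 1$. For $s \geqslant 1$ and $1 \leqslant p \leqslant n-1$, consider $J = \{p,\dots,n-1\}$.
\begin{itemize}
	\item If $s < j_0$, then $D_s$ is spanned by standard vectors, so $\rank_{J,s}(D)$ counts the $e_{i_{j,j'}}$ with $j \leqslant s$ and $i_{j,j'} \geqslant p$.
	\item If $s \geqslant j_0$, then $D_s$ contains the vector $w = \sum_{t} e_{i_t} + e_n$ together with standard vectors. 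Projecting onto $J \subset \{1,\dots,n-1\}$ kills $e_n$.
\end{itemize}
Now use the chain condition $j_0 < j_1 < \cdots < j_k$. For $s \geqslant j_0$, let $t$ be such that $j_t \leqslant s < j_{t+1}$. The vectors $e_{i_1},\dots,e_{i_t}$ already belong to $D_s$. Subtracting them from $w$ shows that $D_s$ is spanned by standard vectors together with $\sum_{u>t} e_{i_u} + e_n$.

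Two families of ranks then separate the data. The ranks $\rank_{\{p,\dots,n\},s}$ count the standard vectors $e_{i_{j,j'}}$ (for $j \leqslant s$) with index $\geqslant p$, plus one, because $e_n$ survives. The ranks $\rank_{\{p,\dots,n-1\},s}$ add one more exactly when some $e_{i_u}$ with $u>t$ satisfies $i_u \geqslant p$.

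Taking differences in $p$ of the first family recovers, for each $s$, the set of indices $\{i_{j,j'} : j \leqslant s\}$. Differencing in $s$ then recovers the set of standard vectors in each block $A_j$, including the $e_{i_{j,j'}}$ with $j' \geqslant 2$ in $A_{j_0}$. This is exactly the data up to permutation inside each block.

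\textbf{Recovering $k$ and the $i_u$ (the main obstacle).} The correction term is $\delta_s(p) = \rank_{\{p,\dots,n-1\},s} - (\text{standard count})$, which equals $1$ precisely when $\max\{i_u : u > t(s)\} \geqslant p$, and $0$ otherwise. This only recovers $M_s = \max\{i_u : u>t(s)\}$ as a function of $s$, not the full set $\{i_u\}$. So it must be combined with the chain structure.

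Each $e_{i_u}$ is a standard vector of the block $A_{j_u}$, and we already know which block every standard vector lies in. As $s$ crosses $j_u$, the vector $e_{i_u}$ leaves the tail set. To reconstruct the set $I = \{i_1,\dots,i_k\}$, I would instead use the ranks of $\{p,\dots,n-1\}$ for every $p$ simultaneously with a fixed $s$. Working at $s = j_0$ alone fails, since it only yields the maximum.

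The key observation to try is this. For $s \geqslant j_0$, the vector $e_{i_u}$ counts towards $\rank_{\{p,\dots,n-1\},s}$ either through $D_s$ (if $j_u \leqslant s$) or through $w$ (only via the maximal remaining index). If this turns out insufficient, note that the ranks for the $B'$-invariant sets $J = \{p,\dots,n\}$ and $\{p,\dots,n-1\}$ are the only invariants available. Then I would test small cases such as $n=3$ with $P$ minimal against \Cref{GL3-GL2}: there, $e_1+e_2$ in the first block versus $e_2$ alone must be distinguished.

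Here the ordering convention in the existence proof matters. The right $P_{\underline m}$-action already removed from $v$ every coordinate $e_{i_{j,j'}}$ with $j \leqslant j_0$. Moreover, the left $T(n-1,\R)$-action allows adding row $q$ to row $p$ when $p<q$. So if $i_u > i_{u'}$ while $j_u < j_{u'}$, the component $e_{i_{u'}}$ should be removable. This suggests the normal form in fact forces $i_1 > i_2 > \cdots > i_k$ along the chain, i.e.\ a decreasing sequence. I expect this refinement to be needed, and proving it is the main obstacle.

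With $I$ decreasing along increasing $j$, the successive maxima $M_s$ for $s = j_0, j_1, \dots$ are exactly $i_1, i_2, \dots$. The blocks in which they lie give $j_1 < j_2 < \cdots$, and the point where $\delta_s$ vanishes gives $k$. Hence all data are determined by invariant ranks, and two normal forms in the same $B'$-orbit coincide up to permutation within the blocks.
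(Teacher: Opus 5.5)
There is a genuine gap: your argument needs the ordering $i_1>\cdots>i_k$, which you do not prove, and without it the statement is false.

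Your rank computations are correct. $\rank_{\{n\},s}$ locates $j_0$. The ranks $\rank_{\{p,\dots,n\},s}$ give the set of standard vectors in each block. $\rank_{\{p,\dots,n-1\},s}$ gives exactly $M_s=\max\{i_u : u>t(s)\}$ and nothing more. This is the same data the paper reads off from the Grassmannian normal form of $D_s$ via \Cref{normal form unique}, whose mixed column is $e_{M_s}+e_n$.

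The condition $i_1>\cdots>i_k$ is not forced by the normal form of \Cref{theorem B proof exist}. Take $n=3$, $\underline m=(1,1,1)$, and write $e_3$ for the last basis vector. Both $[\,e_1+e_2+e_3\mid e_1\mid e_2\,]$ (with $j_0=1$, $k=2$, $(j_1,j_2)=(2,3)$) and $[\,e_2+e_3\mid e_1\mid e_2\,]$ (with $j_0=1$, $k=1$, $j_1=3$) satisfy every condition listed there. Yet the element $b\in B'$ with $be_2=e_2-e_1$ sends the first to $[\,e_2+e_3\mid e_1\mid e_2-e_1\,]$. Adding the second column to the third turns this into the second flag. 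They differ in the first column, not by a permutation inside a block.

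Consistently, allowing both orderings gives $14$ normal forms here, against the $13$ orbits of \Cref{GL3-GL2} and \eqref{Corollary D}. The factor $\binom{n-1}{k}$ in that formula counts one ordering per set $\{i_1,\dots,i_k\}$. So the decreasing condition must be added to the definition of the normal form and established in the existence step. The paper's own proof uses it tacitly, when it asserts that the surviving column of $D_{j_0}$ is $e_{i_1}$ and that $e_{i_2}$ replaces it at $D_{j_1}$.

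Your proposed mechanism for this condition is backwards. Subtracting row $i_u$ from row $i_{u'}$ (allowed when $i_{u'}<i_u$) does kill the $e_{i_{u'}}$-coordinate of $v$. But it also turns the column $e_{i_u}$ of the earlier block $A_{j_u}$ into $e_{i_u}-e_{i_{u'}}$. Repairing that would require adding a column of the later block $A_{j_{u'}}$ to an earlier block, which $P_{\underline m}$ forbids. And if the move did work, the survivors would be increasing, contradicting your conclusion.

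The correct move is the opposite one. If $u<u'$ and $i_u<i_{u'}$, subtract row $i_{u'}$ from row $i_u$. This kills the $e_{i_u}$-coordinate of $v$ and damages only the column $e_{i_{u'}}$ of the later block $A_{j_{u'}}$. That column is restored by adding the column $e_{i_u}$ of $A_{j_u}$. The same move inside a single block, using the Levi factor, shows that at most one $e_{i_s}$ survives per block; this is what produces the chain $j_0<j_1<\cdots<j_k$.

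With the amended normal form, your last paragraph completes the uniqueness argument. One has $M_{j_0}=i_1$, the block of $e_{i_1}$ is $j_1$, $M_{j_1}=i_2$, and so on. Then $k$ is read off where $\delta_s$ vanishes or where the chain reaches $A_l$.
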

\begin{proof}
	Let us recall that, according to \Cref{projection equivariant}, the canonical projections $\Phi_{\underline m \to \underline m'}$ defined in \Cref{def projections} are $GL(n,\R)$-equivariant, hence $B'$-equivariant.

	Let $D \in \D_{\underline m}$ be a normal form as defined in \Cref{theorem B proof exist}. According to \Cref{normal form unique} for $(n_1,n_2) = (n-1,1)$, the indices $i_{1,j'}$ are entirely determined by the $B'$-orbit of $\Phi_{\underline m \to (m_1,n-m_1)}(D)$, hence by the $B'$-orbit of $D$ since $\Phi_{\underline m \to (m_1,n-m_1)}$ is $B'$-equivariant. We then see by induction that all the indices $i_{j,j'}$ are determined by the $B'$-orbit of $D$. 
	
	Similarly, $j_0$ is the smallest integer $p \in \{1,\dots,l\}$ satisfying 
	\begin{equation*}
		\Phi_{\underline m \to (m_1+\cdots+m_p,n-(m_1+\cdots+m_p))}(D)  = 1
	\end{equation*}
	and is thus entirely determined by the $B'$-orbit of $D$.
	
	If $k > 0$, the normal form of 
	\begin{equation*}
		\Phi_{\underline m \to (m_1+\cdots+m_{i_1},n-(m_1+\cdots+m_{i_1}))}(D)
	\end{equation*}
	consists of the blocks $\begin{pmatrix}
		A_1 & \cdots & A_{i_1}
	\end{pmatrix}$
	where we replace the column 
	\begin{equation*}
	\begin{pmatrix}
		\displaystyle \sum_{s=1}^ke_{i_s} \\
		1
	\end{pmatrix} \hspace{3mm} \text{ by } \hspace{3mm} \begin{pmatrix}
		e_{i_1} \\
		1
	\end{pmatrix}.
	\end{equation*}
	Thus, the integer $i_1$ is uniquely determined by the $B'$-orbit of $D$. Moreover, $j_1$ is the smallest integer $p \in \{j_0+1,\dots,l\}$ for which the normal form of 
	\begin{equation*}
		\Phi_{\underline m \to (m_1+\cdots+m_{p},n-(m_1+\cdots+m_{p}))}(D)
	\end{equation*}
	contains the column $\,^t\begin{pmatrix}
		e_{i_2} & 1
	\end{pmatrix}$ in place of the column $\,^t\begin{pmatrix}
		e_{i_1} & 1
	\end{pmatrix},$
	so that $j_1$ is uniquely determined by the $B'$-orbit of $D$. Again, we see by induction that both sequences $(i_1,\dots,i_k)$ and $(j_1,\dots,j_k)$ are uniquely determined by the orbit $B'$-orbit of $D$. The proof is now complete.
\end{proof}

The equivariance of the maps $\Phi_{\underline m \to \underline m'}$ ensures that they induce well-defined maps $B' \backslash G/P_{\underline m} \to B' \backslash G/P_{\underline m'}$. The proof of the previous proposition also implies the following corollary.
\begin{Corollary}
	Let $\mathcal Q_{B'}$ and $\mathcal Q_{P_{\underline m}}$ be the sets of maximal parabolic subgroups of $G$ containing $B'$ and $P_{\underline m}$ respectively. Then, the map
	\begin{equation*}
		\Psi : B'\backslash G/P_{\underline m} \to \displaystyle \prod_{\substack{L \in \mathcal Q_{B'} \\ H \in \mathcal Q_{P_{\underline m}}}} L\backslash G/H
	\end{equation*}
	is injective.
\end{Corollary}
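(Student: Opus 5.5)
The plan is to show that every quantity appearing in the normal form of \Cref{theorem B proof exist} can be read off from $B'$-invariant rank functions $\rank_{J,s}$ with $J \in \mathcal J$ and $1\leqslant s\leqslant l-1$. By \Cref{lemme_rank}, each such rank function is a complete invariant for the corresponding double coset space $P_J\backslash G/H$, where $H$ is the maximal parabolic containing $P_{\underline m}$ that corresponds to $s$. Hence two $B'$-orbits with the same image under $\Psi$ have the same value of every $\rank_{J,s}$. Since normal forms are unique (previous Proposition), it then suffices to show that these ranks determine the normal form.

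The first step is to reduce to two-block flag varieties. Fix $s$. The value $\rank_{J,s}(D)$ depends only on the projection $D_s=\Phi_{\underline m\to(m_1+\cdots+m_s,\,n-m_1-\cdots-m_s)}(D)$. This projection is a point of a Grassmannian, acted on by the Borel subgroup of $GL(n-1,\R)\times GL(1,\R)$. This is Case $0)$ with $(n_1,n_2)=(n-1,1)$, and there \Cref{normal form unique} shows that the ranks $\rank_J$, $J\in\mathcal J$, determine the $B'$-orbit of $D_s$. So equality of all invariant ranks gives equality of the $B'$-orbits of $D_s$ for every $s$.

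The second step is to recover the normal form of $D$ from the orbits of all its projections $D_s$. This is exactly what the uniqueness proof above does. The indices $i_{j,j'}$ are determined inductively from the orbits of the $D_s$. The index $j_0$ is the smallest $s$ for which the last row of $D_s$ is nonzero, which equals $\rank_{\{n\},s}(D)$, itself one of the invariant ranks. The indices $i_1,\dots,i_k$ and $j_1,\dots,j_k$ are detected by how the vector $e_{i_t}$ attached to the distinguished column changes in the normal form of $D_s$ as $s$ increases. All of this is data about the orbits of the $D_s$, hence about the ranks.

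The main obstacle is to check that the normal form of $D_s$ in the sense of \Cref{Theorem B'} is really obtained from the truncated blocks $A_1,\dots,A_s$ as claimed. Concretely, the column $\sum_t e_{i_t}$ must reduce to $e_{i_{t^*}}$, where $i_{t^*}$ is the index among those with $j_t\leqslant s$ that survives the elimination of \Cref{lemme thm B bis}. I would verify this by applying the Gauss–Jordan reduction of \Cref{lemme thm B bis} directly to the truncated matrix. The terms $e_{i_t}$ with $j_t\leqslant s$ can be cleared using the columns $e_{i_t}$ present in $A_{j_t}$, leaving a single column $\,^t(e_{i_{t^*}}\ 1)$ in the form of \Cref{normal form exist}. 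Once this is checked, the inductive reconstruction goes through, and injectivity of $\Psi$ follows.
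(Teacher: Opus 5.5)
Your overall route is the paper's. There, the corollary comes out of the uniqueness proof, which reads the normal form of $D$ off the $B'$-orbits of the Grassmannian projections $D_s$; these orbits are separated by the invariant ranks via \Cref{normal form unique} with $(n_1,n_2)=(n-1,1)$. Your Steps 1 and 2, including the detection of $j_0$ through $\rank_{\{n\},s}$, match this.

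The computation you flag as the main obstacle is, however, misstated, and as written it does not yield the reconstruction.

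\textbf{Which index survives.} For $s\geqslant j_0$, the surviving index is not among the $t$ with $j_t\leqslant s$. Those are exactly the terms removed by subtracting the columns $e_{i_t}\in D_s$, as your next sentence says. What remains is $\sum_{t:\,j_t>s}e_{i_t}$, which generally has several terms. None of these $e_{i_t}$ lies in $D_s$, so the left $\T(n-1,\R)$-action collapses the sum to $e_{q_s}$ with $q_s=\max\{i_t\mid j_t>s\}$. This is the step of \Cref{lemme thm B bis} that selects the largest non-zero coordinate. Hence the normal form of $D_s$ contains $\,^t(e_{q_s}\ 1)$, or $\,^t(0\ 1)$ if no $j_t>s$.

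\textbf{Missing ordering property.} To recover every $i_t$ from the sequence $(q_s)_s$, you need a property of the normal form that you never state; it is only implicit in \Cref{Theorem B}. Labelling so that $j_1<\cdots<j_k$, one has $i_1>\cdots>i_k$. To see this, write $v$ for the $\R^{n-1}$-part of the distinguished column. Suppose $p<q$ both occur in $v$, and the block of $e_p$ is not later than that of $e_q$. Apply $\mathrm{diag}(I_{n-1}+cE_{pq},1)\in B'$, where $E_{pq}$ is the elementary matrix and $c=-v_p/v_q$. Then subtract $c\,e_p$ from the column $e_q$ using $P_{\underline m}$. This kills the $e_p$-coefficient of $v$, contradicting normality.

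With this ordering, $q_s=i_t$ for the smallest $t$ with $j_t>s$, so $q_s$ passes from $i_t$ to $i_{t+1}$ exactly at $s=j_t$. Every $i_t$ then equals $q_{j_{t-1}}$ with $j_{t-1}\leqslant l-1$. The block containing each index is already known from the pure $e$-columns of the normal forms of the $D_s$. With $t^*$ corrected in this way and the ordering made explicit, your argument is complete.
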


%\begin{Corollary}
%	Let $(Q_i)_{i=1,\dots,\ell}$ be the maximal parabolic subgroups of $G$ containing $P$, let $B_1,B_2$ the two Borel subgroup of $G$ containing $P'$ and let $(Q_i')_{i=1,\dots,m}$ be the maximal parabolic subgroups of $G$ containing $P'$. If $(L_j)_j$ is either $(P')$, $(B_1,B_2)$ or $(Q_i')_{i=1,\dots,m}$ and $(H_i)_i$ is either $P$ or $(Q_i)_{i=1,\dots,\ell}$, the map
%	$$P'\backslash G/P \to  \prod_{i,j} L_j\backslash G/H_i$$
%	is injective and induces a poset isomorphism with its image.
%\end{Corollary}
%\begin{proof}
%	
%	
%\end{proof}
%
%

\subsection[Case I)]{Case $\mathrm{I})$} \label{cas 1}

We let $\underline n = (n_1,n_2,n_3)$ and $\underline m = (m_1,m_2)$ be compositions of $n$ satisfying $\min(n_1,n_2,n_3) = 1$ and $\min(m_1,m_2) \geqslant 2$.
Permuting $n_1,n_2,n_3$ if needed, we may assume that $n_1 = 1$ so that
\begin{equation*}
	B' = \left\{\begin{pmatrix}
			a & 0 & 0 \\
			0 & b & 0 \\
			0 & 0 & c
		\end{pmatrix} \mid a \in \T(n_1,\R) = \R^*, \, b \in \T(n_2,\R), \, c \in \T(n_3,\R)\right\} \subset G.
\end{equation*}
Following the $(GL(n,\R)/P_{\underline n})$-realization of $\D_{\underline n}$ from \Cref{flag varieties}, we will write any $D \in \D_{\underline m}$ as
\begin{equation*}
	D = \begin{bmatrix}
	a_1 & \cdots & a_{m_1} \\
	U_1 & \cdots & U_{m_1} \\
	V_1 & \cdots & V_{m_1}
\end{bmatrix}
\end{equation*}
with $a_1,\dots,a_{m_1} \in \R$, $U_1,\dots,U_{m_1} \in \R^{n_2}$ and $V_1,\dots,V_{m_1} \in \R^{n_3}$.

%If $W$ is such a $k$-plane, we may use the right $GL(k,\R)$ action to write
%$$W = \begin{bmatrix}
%	\varepsilon & 0 & \cdots & 0 \\
%	U_1 & U_2 & \cdots &  U_k \\
%	V_1 & V_2 & \cdots & V_k
%\end{bmatrix}$$
%with $\varepsilon \in \{\pm 1\}$.
%Then, using the previous case on the "submatrix"
%$$\begin{pmatrix}
%	U_2 & \cdots & U_k \\
%	V_2 & \cdots & V_k
%\end{pmatrix},$$
%we obtain
%$$W \sim \begin{bmatrix}
%	\varepsilon & 0 & \cdots & 0 & 0 & \cdots & 0 & 0 & \cdots & 0\\
%	U_1 & 0 & \cdots & 0  & e_{j_{\alpha}+1} & \cdots & e_{j_{t}} & e_{j_{t+1}} & \cdots & e_{j_{k}} \\
%	V_1 & f_{i_1} & \cdots & f_{i_{\alpha}}  & f_{i_{\alpha}+1} & \cdots & f_{i_t} & 0 & \cdots & 0
%\end{bmatrix}$$
%and again, using the right $GL(k,\R)$-action, we may assume that $U_1$ has zero $e_{j_{\alpha}+1}, \dots, e_{j_{k}}$ components and that $V_1$ has zero $f_{i_1},\dots,f_{i_t}$ components. Letting
%$$j_0 = \max\{1 \leqslant j \leqslant r \mid U_1 \text{ has non-zero } e_j \text{ component}\}$$
%$$i_0 = \max\{1 \leqslant i \leqslant r \mid V_1 \text{ has non-zero } f_i \text{ component}\},$$
%we get from the left $P'$-action

\begin{Proposition}{\label{orbits I}}
	Any flag $D \in \D_{\underline m}$ belongs to the orbit of a flag of the form
	\begin{equation*}
		\begin{bmatrix}
		a_1 & 0 & \cdots & 0 & 0 & \cdots & 0 & 0 & \cdots & 0\\
		U_1 & 0 & \cdots & 0  & e_{j_{r}+1} & \cdots & e_{j_{s}} & e_{j_{s+1}} & \cdots & e_{j_{m_1}} \\
		V_1 & f_{i_1} & \cdots & f_{i_{r}}  & f_{i_{r}+1} & \cdots & f_{i_s} & 0 & \cdots & 0
	\end{bmatrix}
	\end{equation*}
	with
	\begin{equation*}
		\begin{pmatrix}
		a_1 \\
		U_1 \\
		V_1
	\end{pmatrix} = \begin{pmatrix}
	1 \\
	0 \\
	0
	\end{pmatrix}, \begin{pmatrix}
	1 \\
	e_{j_0} \\
	0
	\end{pmatrix}, \begin{pmatrix}
	1 \\
	0 \\
	f_{i_0}
	\end{pmatrix}, \begin{pmatrix}
	1 \\
	e_{j_0} \\
	f_{i_0}
	\end{pmatrix}, \begin{pmatrix}
	0 \\
	e_{j_0} \\
	0
	\end{pmatrix}, \begin{pmatrix}
	0 \\
	0 \\
	f_{i_0}
	\end{pmatrix}, \begin{pmatrix}
	0 \\
	e_{j_0} \\
	f_{i_0}
	\end{pmatrix}.
	\end{equation*}
\end{Proposition}

\begin{Theorem}
	The flags described in \Cref{orbits I} have different invariant ranks. In particular, the normal form of \Cref{orbits I} is unique and the map
	\begin{equation*}
		\Psi : B'\backslash G/P_{\underline m} \to \displaystyle \prod_{\substack{L \in \mathcal Q_{B'} \\ H \in \mathcal Q_{P_{\underline m}}}} L\backslash G/H
	\end{equation*}
	is injective.
\end{Theorem}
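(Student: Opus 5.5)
We read off every parameter of the normal form in \Cref{orbits I} from $B'$-invariant ranks, following the proof of \Cref{normal form unique}. In case $\mathrm{I})$ with $n_1=1$, the ranks $\rank_J$ with $J$ a union of "tails" of the three blocks are $B'$-invariant. Concretely,
\begin{equation*}
J = J_1\cup\{1+i,\dots,1+n_2\}\cup\{1+n_2+j,\dots,n\},
\end{equation*}
where $J_1\subset\{1\}$ and each tail is allowed to be empty. By \Cref{lemme_rank}, each such $\rank_J$ is constant on the fibres of the corresponding factor $P_J\backslash G/P_{\underline m}$. So injectivity of $\Psi$ follows once these ranks separate the normal forms; uniqueness of the normal form is the same statement.

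The first step uses only the tails in the $U$- and $V$-blocks, that is $J_1=\emptyset$. Deleting the first row gives a point of a Grassmannian-type configuration for $GL(n_2)\times GL(n_3)$: the column space of the last $n-1$ rows of $D$, of dimension $m_1$ or $m_1-1$. The ranks $\rank_J$ with $J\subset\{2,\dots,n\}$ are exactly the case $0)$ invariants of this projected subspace. The proof of \Cref{normal form unique} therefore recovers the following data from them:
\begin{itemize}[label = --]
\item the indices $i_t$ from the successive differences of $\rank_{\{1+n_2+p,\dots,n\}}$;
\item the indices $j_t$ from the successive differences of $\rank_{\{p,\dots\}}$ restricted to the $U$-block;
\item the pairing columns ${}^t(e_j,f_i)$ from the double-difference criterion.
\end{itemize}
One must check that the special first column $(a_1,U_1,V_1)$ does not interfere. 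Since it is normalized to $0$ on the coordinates used by the other columns, it contributes either an extra $e_{j_0}$, an extra $f_{i_0}$, or an extra paired column $(e_{j_0},f_{i_0})$ to the projected configuration. In the cases with $a_1=0$ it is literally a case-$0)$ column.

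The second step separates the seven shapes of the first column. Whether $a_1=1$ or $a_1=0$ is detected by comparing $\rank_{\{2,\dots,n\}}D$ with $m_1$: the rank drops by one exactly when $a_1=1$ and $(U_1,V_1)$ is in the span of the rest, which after normalization means $(U_1,V_1)=0$. More generally, one compares $\rank_J$ and $\rank_{J\cup\{1\}}$ for all tails $J$. The jump $\rank_{J\cup\{1\}}-\rank_J\in\{0,1\}$, viewed as a function of the tail pair $(i,j)$, equals $1$ exactly when the restriction of $(1,U_1,V_1)$ to $J\cup\{1\}$ is not in the span of the other restricted columns. For the shape $(1,e_{j_0},f_{i_0})$, this happens exactly when $j_0$ or $i_0$ is cut off by the tail, i.e.\ $i>j_0$ or $j>i_0$. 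This region determines $j_0$ and $i_0$, and it distinguishes the four shapes with $a_1=1$ by which of the two thresholds is present. The three shapes with $a_1=0$ are then handled by the case-$0)$ reading of the previous step.

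The main obstacle is making this jump argument rigorous. The other columns restricted to $J$ may become dependent, and $e_{j_0}$ or $f_{i_0}$ might coincide with an index already occupied, which would break the "not in the span" reasoning. The plan is to first establish from \Cref{orbits I} (or by a further normalization using $B'$ and $P_{\underline m}$) that $j_0$ and $i_0$ are distinct from all other indices, with $j_0$ and $i_0$ minimal in a suitable sense. With that in place, restricted to any tail the columns are distinct standard vectors, or sums of two, all of which are linearly independent. The jump is then computed combinatorially, and the resulting finite list of rank vectors is checked to be pairwise distinct.
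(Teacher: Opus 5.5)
Your plan breaks at the step you yourself flagged. The normalization making $j_0,i_0$ distinct from all other indices is impossible in general. More seriously, the seven shapes of \Cref{orbits I} do not exhaust the orbits with $a_1=1$.

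Write vectors of $\R^n=\R\oplus\R^{n_2}\oplus\R^{n_3}$ as $(a,U,V)$, and let $\pi(D)\subset\R^{n-1}$ be the image of $D$ under forgetting the first coordinate. The subspace $D'=D\cap\{a=0\}$ depends $B'$-equivariantly on $D$, so its case $0)$ normal form is an orbit invariant.

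First example: take $(n_1,n_2,n_3)=(1,2,2)$, $(m_1,m_2)=(3,2)$ and $D=\Span\big((1,0,f_1),(0,e_2,f_1),(0,e_1,f_2)\big)$. The normal form of $D'$ is given by the last two columns and already uses every index. Yet $(0,f_1)\notin D'$, since $\rank_{\{2,\dots,5\}}D=3$. Hence every representative of this orbit must reuse an index in its first column. Your first step fails here too: $\pi(D)=\Span(f_1,e_2,e_1+f_2)$ has case $0)$ normal form with columns $(0,f_1)$, $(e_2,0)$, $(e_1,f_2)$. So the special column splits the pair $(e_2,f_1)$ rather than contributing an extra column.

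Second example, which is worse: take $(n_1,n_2,n_3)=(1,3,3)$, $(m_1,m_2)=(4,3)$, $D'=\Span\big((0,e_3,f_1),(0,e_2,f_2),(0,e_1,f_3)\big)$ and $D=D'+\R(1,0,f_1+f_2+f_3)$. The stabilizer of $D'$ in $\T(3,\R)\times\T(3,\R)$ is $\{(\mathrm{diag}(\alpha,\beta,\gamma),\mathrm{diag}(\gamma,\beta,\alpha))\}$. It acts diagonally on $(\R^3\oplus\R^3)/D'$ in the coordinates $u_3-v_1$, $u_2-v_2$, $u_1-v_3$. Any element of $B'$ carrying $D$ to a flag of \Cref{orbits I} must preserve $D'$, because that flag also has $a_1=1$ and its intersection with $\{a=0\}$ is a case $0)$ normal form in the orbit of $D'$. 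Therefore the number of non-zero coordinates of the class of $(U_1,V_1)$ modulo $D'$ is an orbit invariant. It equals $3$ here and is at most $2$ for every shape in \Cref{orbits I}. So that list is incomplete, and separating the listed flags by ranks, even if carried out, would not give injectivity of $\Psi$.

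What does work is to make your ``delete the first row'' idea systematic. For $1\notin J$ one has $\rank_J D=\rank_J\pi(D)$. For every union of tails $J\subset\{2,\dots,n\}$ one has $\rank_{J\cup\{1\}}D=\rank_{\{1\}}D+\rank_J D'$. So the invariants of $\Psi$ are exactly $\rank_{\{1\}}D$ together with the case $0)$ invariants of $D'$ and of $\pi(D)$.

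Conversely, the $B'$-orbit of $D$ is determined by $\rank_{\{1\}}D$ and the $\T(n_2,\R)\times\T(n_3,\R)$-orbit of the pair $D'\subseteq\pi(D)$. Indeed, when $\dim\pi(D)=\dim D'+1$, one has $D=D'+\R(1,w)$ with $w\in\pi(D)\setminus D'$ unique up to $D'$ and a non-zero scalar, and the $\R^*$ factor of $B'$ absorbs that scalar.
\begin{itemize}
\item If $\pi(D)=D'$, then \Cref{normal form unique} concludes.
\item Otherwise $D'\subset\pi(D)$ is a flag of type $(m_1-1,1,m_2)$ in $\R^{n-1}$, and its invariant ranks are exactly the ones above. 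So case $\mathrm{I})$ reduces to the subcase $m_2=1$ of \Cref{cas 1'}, or to case $\mathrm{III}')$ when $\min(n_2,n_3)=1$.
\end{itemize}
This also explains why the correct normal forms for $a_1=1$ are those of \Cref{orbits I3'}, which involve sums of basis vectors.

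Finally, your jump criterion is misstated. The restricted first column is never in the span of the others, because of its first entry. For $a_1=1$ one has $\rank_{J\cup\{1\}}D-\rank_J D=1$ if and only if $p_J(U_1,V_1)\in p_J(D')$. For fresh indices this means that both $e_{j_0}$ and $f_{i_0}$ are cut off by the tails (``and'', not ``or'').
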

%$$W \sim \begin{bmatrix}
%	\varepsilon & 0 & \cdots & 0 & 0 & \cdots & 0 & 0 & \cdots & 0\\
%	\varepsilon_1 e_{j_0} & 0 & \cdots & 0  & e_{j_{\alpha}+1} & \cdots & e_{j_{t}} & e_{j_{t+1}} & \cdots & e_{j_{k}} \\
%	\varepsilon_2 f_{i_0} & f_{i_1} & \cdots & f_{i_{\alpha}}  & f_{i_{\alpha}+1} & \cdots & f_{i_t} & 0 & \cdots & 0
%\end{bmatrix}$$

%Using the exact same procedure as in the previous case, we get
%\begin{Proposition}
%	This normal form is unique up to permutation of columns.
%\end{Proposition}

\begin{Corollary}
	Closed $B'$-orbits are given by the following normal forms
	\begin{equation*}
		\begin{bmatrix}
		1 & 0 & \cdots & 0 & 0 & \cdots & 0\\
		0 & 0 & \cdots & 0  & e_{1} & \cdots & e_{k-t-1} \\
		0 & f_{1} & \cdots & f_{t}  & 0 & \cdots & 0
	\end{bmatrix}, \begin{bmatrix}
		0 & 0 & \cdots & 0 & 0 & \cdots & 0\\
		0 & 0 & \cdots & 0  & e_{1} & \cdots & e_{k-t} \\
		f_{1} & f_{2} & \cdots & f_{t}  & 0 & \cdots & 0
	\end{bmatrix}.
	\end{equation*}
\end{Corollary}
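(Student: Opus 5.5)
The plan has two parts. First, the displayed normal forms are $B'$-fixed points, hence their orbits are closed. Second, every other normal form of \Cref{orbits I} has another, distinct normal form in the closure of its orbit. For distinctness I will rely on the uniqueness of normal forms, i.e.\ the preceding Theorem that the invariant ranks separate the flags of \Cref{orbits I}, together with lower semi-continuity of the rank maps.

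\textbf{Fixed points.} Write $e_0$ for the basis vector of the $GL(n_1,\R)=\R^*$ factor. The first displayed flag is the subspace
\begin{equation*}
\R e_0 \oplus \Span(e_1,\dots,e_{k-t-1}) \oplus \Span(f_1,\dots,f_t),
\end{equation*}
and the second is $\Span(e_1,\dots,e_{k-t}) \oplus \Span(f_1,\dots,f_t)$. Each summand is stable under the corresponding diagonal block of $B'$: under $\R^*$, $\T(n_2,\R)$ and $\T(n_3,\R)$ respectively, since initial segments of the standard basis are stable under upper triangular matrices. So these flags are $B'$-fixed, and their orbits are singletons, hence closed.

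\textbf{Non-closedness of the other orbits.} Take a normal form $D$ from \Cref{orbits I} whose orbit $\mathcal O$ is closed. I proceed in three reductions, each by exhibiting a one-parameter family in $\mathcal O$ with a limit that is a different normal form.
\begin{enumerate}[label=(\alph*)]
\item \emph{The first column.} Suppose the first column has two nonzero pieces among $(a_1,U_1,V_1)$, e.g.\ $(1,e_{j_0},f_{i_0})$, $(1,e_{j_0},0)$ or $(0,e_{j_0},f_{i_0})$. Use the diagonal torus of $B'$ to scale the row $j_0$ (or $i_0$) by $N^{-1}$, and use the right $GL(m_1,\R)$-action to renormalize any other column meeting that row. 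Letting $N\to+\infty$ kills one piece, as in the proof of \Cref{orbites fermées multiplicity free}. The limit has a strictly smaller rank $\rank_{J}$ for a suitable $B'$-invariant $J$ (for instance $J=\{n_1+j_0,\dots,n_1+n_2\}$ or the analogous set in the $f$-block). By lower semi-continuity and the uniqueness Theorem, the limit lies in a different orbit, which contradicts closedness. Hence the first column is $(1,0,0)$, $(0,e_{j_0},0)$ or $(0,0,f_{i_0})$. In the last two cases it merges with the $e$- or $f$-columns.
\item \emph{The mixed columns.} Exactly as in the proof of the Corollary to \Cref{Theorem B'}, the mixed columns $\,^t(0,e_{j_p},f_{i_p})$ with $r<p\leqslant s$ degenerate by replacing $e_{j_p}$ with $\frac1N e_{j_p}$ and letting $N\to+\infty$. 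So in a closed orbit $s=r$.
\item \emph{The indices.} Perturb $f_{i_p}$ to $f_p+\frac1N f_{i_p}$, and similarly for the $e$'s, using the upper triangular blocks and the fact that $i_p\geqslant p$. This forces the indices to be $1,\dots,t$ and $1,\dots,k-t(-1)$.
\end{enumerate}
What remains is exactly the two displayed families, according to whether the $\R^*$-coordinate occurs, i.e.\ whether the first column is $(1,0,0)$.

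\textbf{Main obstacle.} The delicate step is (a). Scaling a row by the torus affects every column with a nonzero entry in that row, so I must first use the right $P_{\underline m}$-action to clear $e_{j_0}$ and $f_{i_0}$ from the other columns (the normal form can be arranged so that these indices are distinct from the others). I must also check, through an explicit invariant rank, that the limit is a genuinely different normal form and not just another representative of the same orbit. I would verify this by listing, for each of the seven first-column types, one $J\in\mathcal J$ whose rank drops in the limit.
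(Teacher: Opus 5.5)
The paper states this corollary without proof. Your strategy is the one it uses for the corollaries to \Cref{Theorem B} and \Cref{Theorem B'}: fixed points are closed, and every other normal form degenerates along a $\frac1N$-family to a point where some $B'$-invariant rank takes a different value. Your fixed-point check and steps (b)--(c) are fine.

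The step that fails as written is the one you flag in (a). You claim that the right $P_{\underline m}$-action lets you clear $e_{j_0}$ and $f_{i_0}$ from the other columns, so that these indices occur nowhere else. This is false in general. Take $m_1=2$ and $W=\Span(e_0+e_1,\,e_1+f_1)$. Since $\T(n_2,\R)$ and $\T(n_3,\R)$ preserve the lines $\R e_1$ and $\R f_1$, every $B'$-translate of $W$ has the form $\Span(e_0+\beta e_1,\,e_1+\gamma f_1)$ with $\beta\gamma\neq 0$. Its intersection with $\{x_0=0\}$ is $\R(e_1+\gamma f_1)$, and every vector in it with nonzero $e_0$-coordinate has a nonzero $e_1$- or $f_1$-coordinate. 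So the clearing you say you ``must first'' perform is impossible here.

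The same example shows that first columns $(1,e_1,0)$ and $(1,0,f_1)$ over the mixed column $(0,e_1,f_1)$ give the same orbit. So you should not invoke uniqueness of the normal forms of \Cref{orbits I}. An explicit invariant rank that changes value is what separates orbits, and it suffices on its own; semi-continuity is not needed for that.

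The clearing is unnecessary if you scale the $GL(n_1,\R)=\R^*$ coordinate instead of a row of the other two blocks. The element $g_N=\operatorname{diag}(N,1,\dots,1)$ lies in $B'$ and affects only the first column, which is the only one with $a_1\neq 0$. Hence $g_N\cdot W$ is spanned by $(1,\frac1N U_1,\frac1N V_1)$ together with $W'=W\cap\{x_0=0\}$, and it tends to $\R e_0\oplus W'$ as $N\to+\infty$.

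Now take $J=\{2,\dots,n\}$. Then $\ker p_J=\R e_0$ is $B'$-stable, so $\rank_J$ is $B'$-invariant, and $\rank_J=m_1-\dim(W\cap \R e_0)$. This equals $m_1$ on $W$ whenever $e_0\notin W$, and $m_1-1$ on the limit. This disposes at once of every first column $(1,U_1,V_1)$ not reducible to $(1,0,0)$, without touching the other columns. Your row-scaling can also be made to work, because the remaining Case $0)$ indices are pairwise distinct, but it needs the case-by-case rank bookkeeping you describe.

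After this step, $W$ either contains $e_0$ or lies in $\{x_0=0\}$. Both loci are closed and $B'$-stable. They are $B'$-equivariantly isomorphic, via $W\mapsto W\cap\{x_0=0\}$, to Case $0)$ Grassmannians of $\R^{n_2}\oplus\R^{n_3}$ under $\T(n_2,\R)\times\T(n_3,\R)$. So (b) and (c) reduce directly to the closed-orbit corollary following \Cref{Theorem B'}.
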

%\begin{Remark}
%	Note that in this context, all closed orbits are fixed points.
%\end{Remark}

\subsection[Case II)]{Case $\mathrm{II})$} \label{cas 2}

We let $\underline n = (n_1,n_2,n_3)$, $\underline m = (m_1,m_2)$ be compositions of $n$ satisfying $\min(n_1,n_2,n_3) \geqslant 2$ and $\min(m_1,m_2) = 2$. Consider
\begin{equation*}
	B' = \left\{\begin{pmatrix}
	a & 0 & 0 \\
	0 & b & 0 \\
	0 & 0 & c
\end{pmatrix} \mid a \in \T(n_1,\R), \, b \in \T(n_2,\R), \, c \in \T(n_3,\R)\right\}.
\end{equation*}
We distinguish between two subcases: $(m_1,m_2) = (2,n-2)$ and $(m_1,m_2) = (n-2,2)$. Moreover, for each $1 \leqslant i \leqslant 3$, we let $e_{1,i},\dots,e_{n_i,i}$ be the canonical basis of $\R^{n_i}$.

\subsubsection*{Subcase $(m_1,m_2) = (2,n-2)$}

Following the $(GL(n,\R)/P_{\underline n})$-realization of $\D_{\underline n}$ from \Cref{flag varieties}, we will write any $D \in \D_{(2,n-2)}$ as
\begin{equation*}
	D = \begin{bmatrix}
	U_1 & V_1 \\
	U_2 & V_2 \\
	U_3 & V_3
\end{bmatrix}
\end{equation*}
with $U_1,V_1 \in \R^{n_1}$, $U_2,V_2 \in \R^{n_2}$ and $U_3,V_3 \in \R^{n_3}$.

\begin{Proposition}{\label{orbits II1}}
	Any flag $D \in \D_{(2,n-2)}$ belongs to the $B'$-orbit of a flag of the form
	\begin{equation*}
		D = \begin{bmatrix}
	U_1 & V_1 \\
	U_2 & V_2 \\
	U_3 & V_3
\end{bmatrix}
	\end{equation*}
	where:
	\begin{itemize}[label = --]
		\item each $U_i$ and each $V_i$ is either $0 \in \R^{n_i}$ or a basis vector $e_{j_i,i} \in \R^{n_i}$,
		\item whenever both $U_i$ and $V_i$ are non-zero then $U_i \neq V_i$,
		\item at least one $U_i$ and one $V_j$ are non-zero.
	\end{itemize}
\end{Proposition}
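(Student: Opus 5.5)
The plan is a column reduction in the spirit of \Cref{lemme_fondamental}, carried out block by block, using the right action of $GL(2,\R)$ (the Levi factor acting on the $2$-plane) together with the left action of $B' = \T(n_1,\R)\times\T(n_2,\R)\times\T(n_3,\R)$. Write $D = [\,U \mid V\,]$ with $U = {}^t(U_1,U_2,U_3)$ and $V = {}^t(V_1,V_2,V_3)$, and view it as the $2$-plane spanned by $U$ and $V$. The key observation is that the left action of $B'$ acts independently on the three row-blocks. So for each $i$ we may freely apply $\T(n_i,\R)$ to the $n_i\times 2$ matrix $(U_i\ V_i)$. However, column operations are shared between the three blocks.

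The first step is to normalize a single block. Given a nonzero $n_i\times 2$ matrix $(U_i\ V_i)$, upper triangular row operations bring it to a matrix with at most two nonzero entries. These entries sit in rows $p$ and $q$, which are the "pivot" rows obtained by scanning from the bottom, exactly as in the proof of \Cref{lemme_fondamental}. Using scalings of rows and of columns, we can make each column equal to $0$ or to a basis vector. The difficulty is that a mix such as $U_i = e_p$, $V_i = e_p + e_q$ may require a column operation, and such an operation affects the other blocks.

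The second step is the global reduction. We start with the bottom-most nonzero entry over all three blocks and choose the column operation so that one column, say $U$ after swapping, has that entry as its pivot. We then clear that row's entry in $V$ by a column operation $V \mapsto V - cU$. Next we use the triangular row actions inside the block of the pivot to kill the other entries of $U$ in that block above the pivot. Entries of $U$ in the other blocks cannot be killed this way. Instead, in each other block $i$ we use $\T(n_i,\R)$ to reduce $U_i$ to a basis vector (its bottom nonzero entry, scaled to $1$ with the other entries above it eliminated). Row operations only move entries upward, so they do not destroy this. We then reduce $V_i$ modulo $U_i$ within block $i$ using row operations only, which cannot be done directly. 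This is the main obstacle: the relative position of $U_i$ and $V_i$ in each block must be normalized with a single common column operation $V \mapsto aV + bU$.

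To handle this obstacle, I would proceed block by block. In each block, the $2$-dimensional row-reduced form of $(U_i\ V_i)$ depends only on the flag of the span of $U_i$, $V_i$ relative to the standard flag. Since the column operation is global, I would use it only once, to make $V$ vanish at the bottom pivot of $U$. Inside each block, I would then use the freedom to add multiples of row $p$ to rows above it. After $U_i = e_{p}$ is fixed, $V_i$'s entry at row $p$ can be removed by a row operation only if there is a row below $p$ with a nonzero entry of $V_i$ that can be used. Otherwise, I would check that the triangular action sending $e_q \mapsto e_q + c\,e_p$ (with $q<p$) does the job. Concretely: if $V_i$'s bottom pivot $q$ is below $p$, scale and clear above so that $V_i = e_q$. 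If $q < p$, the entry of $V_i$ at row $p$ must be removed. This is where I would exploit the fact that $U_i$'s coordinate at row $p$ can be rescaled independently per block, through the diagonal of $\T(n_i,\R)$, since row scalings within block $i$ rescale the whole row, including its $V$-entry. Finally, the third bullet follows from $\dim D = 2$ (neither $U$ nor $V$ is zero). The second bullet, $U_i \neq V_i$, holds because if $U_i = V_i = e_j$ we may subtract columns. Since this subtraction is global, it is done once, at the step where the overall pivot is chosen. I expect this ordering argument to be the delicate part, and I would fall back on a case analysis according to which blocks contain the pivots of $U$ and $V$ if the uniform argument fails.
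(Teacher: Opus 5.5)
You flagged one step as delicate: normalising the relative position of $U_i$ and $V_i$ in all three blocks with one common column operation. That step is a genuine gap, and it cannot be filled, because the proposition as stated is false. (The paper gives no proof of it.)

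\textbf{The obstruction.} For a block $i$ on which $D$ has nonzero projection, let $p_i$ be the last row of block $i$ where the $n\times 2$ matrix $[\,U\ V\,]$ is nonzero.
\begin{itemize}
\item Left multiplication by $\T(n_i,\R)$ only adds lower rows of block $i$ to higher ones. The rows below $p_i$ vanish, so $B'$ only rescales row $p_i$.
\item Hence neither the diagonal scalings nor the triangular moves you propose can change the ratio between the $U$-entry and the $V$-entry of that row.
\item Only the right $GL(2,\R)$-action changes this ratio. It moves the three points $[\text{row } p_i]\in\mathbb{P}^1$ by one and the same matrix.
\item In a flag of the stated form, every row of $[\,U\ V\,]$ lies in $\{(0,0),(1,0),(0,1)\}$. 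So these three points take at most two values.
\end{itemize}
If the three points are pairwise distinct, no column operation can bring $D$ to the stated form.

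\textbf{A concrete counterexample.} Take $n_1=n_2=n_3=2$, so $n=6$ and $\underline m=(2,4)$, which is within case II. Let $D$ be spanned by $U={}^t(1,0,0,1,1,1)$ and $V={}^t(0,1,1,0,0,1)$.
\begin{itemize}
\item Each block projection of $D$ is $2$-dimensional.
\item Rows $2,4,6$ of $[\,U\ V\,]$ are $(0,1)$, $(1,0)$, $(1,1)$. Hence $\rank_{\{2,4\}}D=\rank_{\{2,6\}}D=\rank_{\{4,6\}}D=2$.
\item These ranks, and $\rank_{\{1,2\}}$, $\rank_{\{3,4\}}$, $\rank_{\{5,6\}}$, are $B'$-invariant, because each index set is a union of final segments of blocks.
\item Consider a flag of the stated form whose three block projections are $2$-dimensional. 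Each $(U_i\ V_i)$ is then a $2\times 2$ permutation matrix.
\item Its rows $2,4,6$ therefore lie in $\{(1,0),(0,1)\}$, so two of them coincide and one of the three ranks above equals $1$.
\end{itemize}
Hence the $B'$-orbit of $D$ contains no flag of the stated form. On this open stratum there are $5$ orbits, while the stated forms account for only $4$.

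So no argument, yours or another, can prove the proposition as written. The list of normal forms must be enlarged by forms in which some block component is a sum of two basis vectors, such as $U_3=e_{1,3}+e_{2,3}$ here. This is similar to the column $\sum_s e_{i_s}$ in \eqref{eqn:normalform2}.
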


\begin{Theorem}
	The flags described in \Cref{orbits II1} have different invariant ranks. In particular, the normal form of \Cref{orbits II1} is unique and the map
	\begin{equation*}
		\Psi : B'\backslash G/P_{\underline m} \to \displaystyle \prod_{\substack{L \in \mathcal Q_{B'} \\ H \in \mathcal Q_{P_{\underline m}}}} L\backslash G/H
	\end{equation*}
	is injective.
\end{Theorem}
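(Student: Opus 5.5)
The plan is the same as for \Cref{normal form unique}: every invariant of a normal form in \Cref{orbits II1} should be read off from $B'$-invariant ranks $\rank_J$, with $J$ ranging over the sets giving parabolics in $\mathcal Q_{B'}$. Here $D=[\,U\mid V\,]$ is a flag of type $(2,n-2)$, and the two columns span the $2$-plane $D_1$. Since $B'$ is block diagonal with upper triangular blocks, $\rank_J$ is $B'$-invariant exactly when
\begin{equation*}
J=J_1(a)\cup J_2(b)\cup J_3(c),
\end{equation*}
where each $J_i(a)$ is either empty or a terminal segment $\{a,\dots,n_i\}$ of the $i$-th block of coordinates. These functions take values in $\{0,1,2\}$, and \Cref{lemme_rank} identifies their fibers with the double cosets $P_J\backslash G/P_{(2,n-2)}$. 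So injectivity of $\Psi$ reduces to one claim: two normal forms with all these ranks equal coincide, up to swapping the two columns (which does not change $D$).

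Reconstruction will go block by block, adapting the proof of \Cref{normal form unique}.
\begin{enumerate}[label=\arabic*.]
\item For a single block $i$, the function $a\mapsto\rank_{J_i(a)}(D)$ is non-increasing with jumps of size $1$. Since the block-$i$ part of $D$ is spanned by $U_i,V_i\in\{0\}\cup\{e_{j,i}\}$, these jumps occur exactly at the indices of the nonzero basis vectors among $U_i,V_i$. This recovers, for each $i$, the unordered set $\{U_i,V_i\}\setminus\{0\}$, using that $U_i\ne V_i$ when both are nonzero.
\item It remains to recover the pairing, i.e.\ which entries sit in the same column. For blocks $i\ne i'$ and nonzero entries $e_{a,i}$ and $e_{b,i'}$, they lie in a common column if and only if the rank of the projection of $D_1$ onto the coordinates in $J_i(a)\cup J_{i'}(b)$ is $1$, while each of the one-block ranks $\rank_{J_i(a)}(D)$ and $\rank_{J_{i'}(b)}(D)$ is $1$. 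This is the analogue of the last criterion in the proof of \Cref{normal form unique}. When a block carries two nonzero entries, I will use the same criterion with one of the segments shortened, i.e.\ take a difference of ranks as in that proof.
\item Combining these data over all pairs of blocks determines the two columns up to order.
\end{enumerate}

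The main obstacle is step 2 when several blocks contain two nonzero entries, for instance $U_1,V_1,U_2,V_2$ all nonzero. Then $\rank_{J_1(a)\cup J_2(b)}(D)$ may already equal $2$ and carry no pairing information. I would handle this by taking $a$ above the smaller index in block $1$, so that $J_1(a)$ sees only the larger-index vector, and then varying $b$. The resulting rank jumps tell whether this vector pairs with the larger or the smaller index in block $2$.

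I will check this in a small finite case analysis over which of the six vectors are zero. There are at most $2^6$ patterns, reduced by the column swap and by the condition that $U$ and $V$ each have a nonzero entry. Finally, $D$ has rank $2$, so the two columns are independent and this pairing is always well defined.
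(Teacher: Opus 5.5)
Your separation argument is sound. It is the natural adaptation of the proof of \Cref{normal form unique}, which is evidently what the paper intends, since it states this theorem without proof. The single-block rank jumps recover $\{U_i,V_i\}\setminus\{0\}$, and testing the top entries of two blocks recovers the column pairing. So you do prove the first sentence of the statement, and uniqueness of the listed forms up to swapping the columns.

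The gap is your reduction ``injectivity of $\Psi$ reduces to one claim: two normal forms with all these ranks equal coincide''. This needs every $B'$-orbit on $\D_{(2,n-2)}$ to contain a flag of \Cref{orbits II1}, and that list is not exhaustive. Take
\begin{equation*}
D=\begin{bmatrix} e_{1,1} & 0\\ 0 & e_{1,2}\\ e_{1,3} & e_{1,3}\end{bmatrix},
\end{equation*}
i.e.\ $D_1=\Span(u,v)$ with $u=e_{1,1}+e_{1,3}$ and $v=e_{1,2}+e_{1,3}$.
\begin{itemize}[label=--]
\item When $J$ is a single whole block, $\rank_J(D)=1$ for each of the three blocks.
\item When $J$ is a union of two whole blocks, $\rank_J(D)=2$ for each of the three pairs.
\item For a listed flag whose three single-block ranks all equal $1$, the condition $U_i\neq V_i$ forces exactly one of $U_i,V_i$ to be nonzero for each $i$. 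Two of the three nonzero entries then share a column, so the corresponding two-block rank is $1$.
\end{itemize}
Hence $D$ lies in no listed orbit. Invariantly, if $\pi_i$ is the projection onto block $i$, the lines $\ker(\pi_i|_{D_1})$ are pairwise distinct, whereas for a listed flag every distinguished line is $\Span(u)$ or $\Span(v)$. So your argument, like \Cref{orbits II1} itself, does not establish injectivity.

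The conclusion is still true, and the cleanest repair bypasses normal forms.
\begin{itemize}[label=--]
\item For $W=D_1$ and $0\leqslant a\leqslant n_i$, put $W_{i,a}=\{w\in W\mid \pi_i(w)\in\Span(e_{1,i},\dots,e_{a,i})\}$. Then $\rank_J(D)=2-\dim(W_{1,a_1}\cap W_{2,a_2}\cap W_{3,a_3})$, where $J$ is the union of the terminal segments $\{a_i+1,\dots,n_i\}$ of the blocks.
\item An adapted-basis argument shows that the $\T(n_i,\R)$-orbit of $\pi_i|_W$ is determined by the filtration $(W_{i,a})_a$. Hence $W$ and $W'$ are $B'$-conjugate if and only if some isomorphism $W\to W'$ carries the three filtrations to each other.
\item Each filtration consists of its jump positions plus at most one line $L_i\subset W$. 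Since $GL(W)$ is transitive on ordered triples of distinct points of $\mathbb P(W)$, the orbit is determined by the jump positions and by which $L_i$ coincide. All of these data are read off from the intersection dimensions above.
\end{itemize}
Alternatively, you could add the missing orbits to the list and extend your pairing test to them. These are exactly the orbits in which all three blocks are nonzero and $L_1,L_2,L_3$ are pairwise distinct. Typical representatives have $U_3=V_3=e_{p,3}$, or $U_3=e_{p,3}+e_{p',3}$ and $V_3=e_{p',3}$ with $p<p'$.
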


\begin{Corollary}
	Closed orbits are given by the normal forms
	\begin{equation*}
		\begin{bmatrix}
	U_1 & V_1 \\
	U_2 & V_2 \\
	U_3 & V_3
\end{bmatrix}
	\end{equation*}
for which exactly one $U_i$ is non zero and equal to $e_{1,i}$ and one $V_j$ is non-zero and equal to $e_{1,j}$.
\end{Corollary}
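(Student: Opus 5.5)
The plan is to prove, as in the corollaries for \Cref{Theorem B} and \Cref{Theorem B'}, that the closed $B'$-orbits on $\D_{(2,n-2)}$ are exactly the $B'$-fixed points. I would then read these fixed points off in the normal form of \Cref{orbits II1}.

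\emph{Step 1: fixed points.} A $B'$-fixed point is a plane $W\subset \R^{n_1}\oplus\R^{n_2}\oplus\R^{n_3}$ that is stable under $B'$. In particular it is stable under the diagonal torus, whose weights on the basis vectors $e_{p,i}$ are pairwise distinct. Hence $W$ is spanned by basis vectors, and $W=\bigoplus_i (W\cap\R^{n_i})$. Each summand $W\cap \R^{n_i}$ is stable under the upper triangular group $\T(n_i,\R)$, so it is an initial segment $\Span(e_{1,i},\dots,e_{d_i,i})$. Since $\dim W=2$, we get $W=\Span(e_{1,i},e_{1,j})$ with $i\neq j$, or $W=\Span(e_{1,i},e_{2,i})$. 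In normal form these are the flags with one nonzero $U_i=e_{1,i}$ and one nonzero $V_j$ equal to $e_{1,j}$ (and $V_i=e_{2,i}$ when $i=j$, which is forced by the condition $U_i\neq V_i$). Such points are visibly fixed, so their orbits are singletons and hence closed.

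\emph{Step 2: every other normal form has a non-closed orbit.} Take a normal form $D$ from \Cref{orbits II1} that is not of the above type. I would exhibit a sequence $D_N$ in the orbit $B'\cdot D$ converging to a flag $D_\infty$ with strictly smaller invariant ranks, as in the earlier corollaries. There are two moves.

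\textbf{(a) Killing an extra coordinate.} If more than two of the six blocks are nonzero, say $U_i\neq 0$ and $U_{i'}\neq 0$ with $i\neq i'$, I would act by the diagonal element of $B'$ that multiplies the $i'$-th factor by $1/N$. In the limit $U_{i'}$ disappears. The limit is still a $2$-plane, because the $V$ column is untouched and $U_i$ survives. The same move applies to the $V$ column.

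\textbf{(b) Lowering an index.} If a surviving basis vector is $e_{p,i}$ with $p>1$ (respectively $p>2$ when both $U_i$ and $V_i$ lie in $\R^{n_i}$), I would use the unipotent part of $\T(n_i,\R)$ to replace it by $e_{q,i}+\frac1N e_{p,i}$, with $q$ the smallest admissible index. Rescaling the column then gives a limit in which the vector is $e_{q,i}$.

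In both cases the limit has a strictly smaller value of some $B'$-invariant $\rank_J$. For move (b) take $J=\{p,\dots,n_i\}$; for move (a) take $J$ equal to a full block. By \Cref{lemme_rank} and the injectivity statement of the theorem preceding this corollary, $D_\infty$ lies in a different orbit, so $B'\cdot D$ is not closed. Iterating these moves reaches exactly the fixed normal forms of Step 1, which identifies the closed orbits.

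\emph{Main obstacle.} The delicate point is the case $i=j$, where $U_i$ and $V_i$ both live in $\R^{n_i}$. There move (b) must not make the two columns collinear in the limit, so the lowering has to be done one column at a time. One first brings the column with the smaller index to $e_{1,i}$, and then the other column to $e_{2,i}$, using the right $GL(2,\R)$-action to keep the two columns independent throughout. I would also check that each limit is again a normal form, or is reduced to one by \Cref{orbits II1}, so that the rank comparison applies.
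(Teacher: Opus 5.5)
Your Step~1 is correct, and it proves slightly more than the corollary says. Every $n_i\geqslant 2$ in case $\mathrm{II})$, so the planes $\Span(e_{1,i},e_{2,i})$ are $B'$-fixed, hence closed orbits. The corollary, however, asks for $V_j=e_{1,j}$, which for $i=j$ would give $U_i=V_i$. Read literally, it therefore lists only $\Span(e_{1,i},e_{1,j})$ with $i\neq j$. Your list is the right one, but say explicitly that you are correcting the statement, rather than claiming that $V_i=e_{2,i}$ is ``forced'' by it.

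The strategy of Step~2 is the one the paper uses for the closed-orbit corollaries in the multiplicity-free and Grassmannian cases: degenerate each non-fixed representative along a sequence in its orbit, and detect the limit by a $B'$-invariant rank. Note that $B'$-invariance of $\rank_J$ alone already gives $D_\infty\notin B'\cdot D$. You do not need the injectivity of $\Psi$, nor do you need the limit to be a normal form.

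The gap is the reduction ``take a normal form $D$ from Proposition~\ref{orbits II1}''. Your moves (a) and (b) are analysed only on representatives whose blocks are $0$ or single basis vectors with $U_i\neq V_i$. That list, as stated, does not contain a representative of every $B'$-orbit. Take $W=\Span(e_{1,1}+e_{1,2},\,e_{1,1}+e_{1,3})$. It lies in the $B'$-stable space $L=\Span(e_{1,1},e_{1,2},e_{1,3})$ and satisfies $W\cap\R^{n_i}=0$ for $i=1,2,3$; both properties are $B'$-invariant. A representative of the proposition's shape lying in $L$ has all blocks in $\{0,e_{1,i}\}$, so the condition $U_i\neq V_i$ makes the supports of the two columns disjoint. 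One column is then a single $e_{1,i}$, contradicting $W\cap\R^{n_i}=0$. So the orbit of $W$, which is not closed, is never examined by your argument.

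The repair is to run move (a) without normal forms. Let $b_N\in B'$ multiply the $i$-th factor by $1/N$ and fix the others. Let $\pi_i$ and $\pi_{\hat\imath}$ be the projections onto the $i$-th factor and onto the sum of the other two. For any $2$-plane $W$, as $N\to+\infty$ one has $b_NW\to(W\cap\R^{n_i})\oplus\pi_{\hat\imath}(W)$. This limit is again a $2$-plane, because $\ker(\pi_{\hat\imath}|_W)=W\cap\R^{n_i}$. Let $J$ be the index set of the $i$-th factor. The $B'$-invariant rank $\rank_J$ goes from $\dim\pi_i(W)$ to $\dim(W\cap\R^{n_i})$, and $W\cap\R^{n_i}\subseteq\pi_i(W)$. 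Hence a closed orbit forces $\pi_i(W)=W\cap\R^{n_i}$ for all $i$, that is, $W=\bigoplus_i(W\cap\R^{n_i})$.

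Applying Lemma~\ref{lemme_fondamental} in each factor then moves such a $W$, within its orbit, to a coordinate plane. That plane does have the shape of Proposition~\ref{orbits II1}, with exactly two non-zero blocks, and your move (b) then leaves only the fixed points of Step~1. This version also removes a slip in (a): you say the $V$ column is ``untouched'', which fails when $V_{i'}\neq 0$.
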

% \begin{Remark}
% 	Note that in this context, all closed orbits are fixed points.
% \end{Remark}

\subsubsection*{Subcase $(m_1,m_2) = (n-2,2)$}{\label{dualité}}

Let $\R^n$ be equipped with its usual inner product $\langle, \rangle$. We will write $u^*$ and $V^\perp$ respectively for the Euclidean adjoint of a linear map $u : \R^n \to \R^n$ and for the orthogonal complement of a linear subspace $V \subset \R^n$.
In this setting, the map
\begin{equation}{\label{homeo dual}}
	\Gamma : \begin{array}{ccc}
		\D_{(n-2,2)} & \to & \D_{(2,n-2)} \\
		(V_1 \subset \R^n) & \mapsto & (V_1^\perp \subset \R^n)
	\end{array}
\end{equation}
is a homeomorphism. 
\begin{Lemma}{\label{lemme adjoint}}
	If $u \in GL(\R^n)$ and $V \subset \R^n$ is a linear subspace, one has
	\begin{equation*}
		u(V)^\perp = (u^*)^{-1}(V^\perp)
	\end{equation*}
	where $u^*$ is the Euclidean adjoint map of $u$.
\end{Lemma}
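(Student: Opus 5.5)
We want to show $u(V)^\perp = (u^*)^{-1}(V^\perp)$ for $u \in GL(\R^n)$ and a linear subspace $V \subset \R^n$. The plan is to prove the inclusion $(u^*)^{-1}(V^\perp) \subset u(V)^\perp$ directly from the defining identity of the adjoint, and then to get equality by comparing dimensions.

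For the inclusion, take $w \in (u^*)^{-1}(V^\perp)$. Then $u^*(w) \in V^\perp$. For any $v \in V$,
\begin{equation*}
	\langle w, u(v)\rangle = \langle u^*(w), v\rangle = 0,
\end{equation*}
so $w$ is orthogonal to every element of $u(V)$. Hence $w \in u(V)^\perp$.

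For the dimension count, note that $u$ is invertible, so $u^*$ is invertible as well, with $(u^*)^{-1} = (u^{-1})^*$. Both $u$ and $(u^*)^{-1}$ therefore preserve dimensions of subspaces, which gives
\begin{equation*}
	\dim (u^*)^{-1}(V^\perp) = \dim V^\perp = n - \dim V = n - \dim u(V) = \dim u(V)^\perp .
\end{equation*}
Two finite-dimensional subspaces, one contained in the other and of equal dimension, coincide. This yields the claimed equality.

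As an alternative, one could prove the reverse inclusion directly. If $w \in u(V)^\perp$, then $\langle u^*(w), v\rangle = \langle w, u(v)\rangle = 0$ for all $v \in V$, so $u^*(w) \in V^\perp$. In this symmetric form the argument does not even use invertibility beyond making sense of $(u^*)^{-1}(V^\perp)$ as a preimage, and I would likely present both inclusions this way.

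There is no real obstacle here. The only point requiring care is that $(u^*)^{-1}(\cdot)$ can be read either as a preimage under $u^*$ or as the image under $(u^*)^{-1}$. Since $u^*$ is bijective these agree, and the computation above is valid under either reading.
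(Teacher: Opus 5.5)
Your proposal is correct, and the direct two-inclusion argument you sketch at the end is exactly the paper's proof: the paper runs the chain $x \in u(V)^\perp \iff \langle x, u(y)\rangle = 0 \ \forall y \in V \iff \langle u^*(x), y\rangle = 0 \ \forall y \in V \iff u^*(x) \in V^\perp \iff x \in (u^*)^{-1}(V^\perp)$. Your main plan's dimension count is valid but unnecessary, since these equivalences already give both inclusions at once.
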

\begin{proof}
	If $u \in GL(\R^n)$, $x \in \R^n$ and $z \in V$, one has
	\begin{align*}
		x \in u(V)^\perp &\iff \forall y \in V, \, \langle x,u(y) \rangle = 0 \\
		&\iff \forall y \in V, \, \langle u^*(x),y \rangle = 0 \\
		&\iff u^*(x) \in V^\perp \\
		&\iff x \in (u^*)^{-1}(V^\perp).
	\end{align*}
\end{proof}
\begin{Lemma}
	Under the homeomorphism \eqref{homeo dual} the action of $B'$ on $\D_{(2,n-2)}$ is given by the left-multiplication by inverse transpose.
\end{Lemma}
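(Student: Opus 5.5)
The statement concerns the transported action. Under $\Gamma$, a group element $g \in B'$ should act on $\D_{(2,n-2)}$ by $W \mapsto {}^t g^{-1} W$. Concretely, I want to show that for every $g \in B'$ and every $V_1 \in \D_{(n-2,2)}$,
\begin{equation*}
	\Gamma(g \cdot V_1) = {}^t g^{-1} \cdot \Gamma(V_1).
\end{equation*}
The plan is a direct computation based on \Cref{lemme adjoint}.

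First, I note that the Euclidean adjoint of the linear map $x \mapsto gx$ is $x \mapsto {}^t g\, x$. This holds because $\langle x, gy \rangle = {}^t x\, g\, y = \langle {}^t g\, x, y\rangle$. Hence $(u^*)^{-1}$ is left multiplication by ${}^t g^{-1}$.

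Next, I apply \Cref{lemme adjoint} with $u = g$ and $V = V_1$. The left side is
\begin{equation*}
	\Gamma(g\cdot V_1) = (gV_1)^\perp,
\end{equation*}
and the lemma rewrites it as
\begin{equation*}
	(g^*)^{-1}(V_1^\perp) = {}^t g^{-1} V_1^\perp = {}^t g^{-1}\cdot \Gamma(V_1).
\end{equation*}
This is exactly the asserted formula. It uses only that the $G$-action on flags is $g\cdot V = g(V)$, which matches the identification $\D_{\underline m} \simeq G/P_{\underline m}$ of \Cref{flag varieties}.

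Finally, I would record the consequence that the authors clearly want for the next step. Since $B'$ is block diagonal with upper triangular blocks, the set $\{{}^t g^{-1} \mid g \in B'\}$ is the block-diagonal group of lower triangular blocks, namely $B'^{-}$. Therefore $\Gamma$ induces a bijection
\begin{equation*}
	B'\backslash \D_{(n-2,2)} \simeq B'^{-}\backslash \D_{(2,n-2)}.
\end{equation*}
Conjugating by the permutation matrix that reverses the order of the basis vectors inside each block $\R^{n_i}$ turns $B'^{-}$ back into $B'$. This reduces the subcase $(n-2,2)$ to \Cref{orbits II1}.

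There is no real obstacle. The only point requiring care is bookkeeping: checking that the action is a left action, so that the inverse transpose rather than the transpose appears. Checking that $g\mapsto {}^t g^{-1}$ is a group homomorphism is immediate.
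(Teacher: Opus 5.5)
Your proposal is correct and follows the paper's own argument: you apply \Cref{lemme adjoint} with $u = g$ to get $\Gamma(g\cdot V_1) = (g^*)^{-1}\Gamma(V_1)$, which is the commutative diagram in the paper, and then identify $(g^*)^{-1}$ with left multiplication by ${}^t g^{-1}$. Your closing remark about conjugating the lower-triangular group back to $B'$ by the within-block reversal goes beyond the statement, but it is also correct.
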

\begin{proof}
	According to \Cref{lemme adjoint}, the following diagram is commutative for any $u \in GL(\R^n)$:
	
	\begin{center}
		\begin{tikzcd}[column sep={1.5cm},row sep={1.5cm}]
			\D_{(n-2,2)} \arrow[r, "\Gamma"] \arrow[d,"u", swap] & \D_{(2,n-2)} \arrow[d, "(u^*)^{-1}"]\\
			\D_{(n-2,2)} \arrow[r,"\Gamma",swap] & \D_{(2,n-2)}
		\end{tikzcd}
	\end{center}
	Since the matrix of the inverse adjoint map of a linear map is the inverse transpose of the matrix of the linear map, the lemma follows.
\end{proof}

\begin{Corollary}
	Under the heomeomorphism \eqref{homeo dual}, $B'$-orbits on $\D_{(n-2,2)}$ correspond to the $\overline{B'}$-orbits on $\D_{(2,n-2)}$ where 
	$\overline{B'} = \{\,^tg \mid g \in B'\}.$
\end{Corollary}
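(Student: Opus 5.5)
The corollary follows almost formally from the preceding lemma, so I will just unpack what it says. By the lemma, for every $u\in GL(\R^n)$ the diagram shows $\Gamma(u\cdot V_1) = (u^*)^{-1}\cdot \Gamma(V_1)$. In matrix terms, $u^*$ is represented by ${}^t u$, so $\Gamma(g\cdot D) = {}^tg^{-1}\cdot \Gamma(D)$ for all $g\in B'$ and $D\in\D_{(n-2,2)}$.

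Fix $D\in\D_{(n-2,2)}$ and apply $\Gamma$ to its orbit:
\begin{equation*}
\Gamma(B'\cdot D)=\{{}^tg^{-1}\cdot\Gamma(D)\mid g\in B'\}.
\end{equation*}
I then check that $\{{}^tg^{-1}\mid g\in B'\}=\overline{B'}$. Since $B'$ is a group, $g\mapsto g^{-1}$ is a bijection of $B'$. Hence $\{{}^tg^{-1}\mid g\in B'\} = \{{}^th\mid h\in B'\} = \overline{B'}$. It follows that $\Gamma(B'\cdot D)=\overline{B'}\cdot\Gamma(D)$.

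Finally, $\Gamma$ is a bijection (indeed a homeomorphism, with inverse $W\mapsto W^\perp$). So the assignment $B'\cdot D\mapsto \overline{B'}\cdot\Gamma(D)$ gives a bijection $B'\backslash\D_{(n-2,2)}\to\overline{B'}\backslash\D_{(2,n-2)}$. Because $\Gamma$ is a homeomorphism, this bijection also preserves closures, and therefore preserves the closure order and the set of closed orbits. I would add that remark, since it is what makes the corollary useful later.

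No step is a real obstacle. The only point needing care is that the inverse does not create a different group: $B'$ is closed under inversion, so $(B')^{-1}=B'$. The group $\overline{B'}$ is the block-diagonal group of lower triangular blocks. It is conjugate to $B'$ by the permutation matrix reversing the order inside each block. This conjugation will let the next step reduce to the subcase $(m_1,m_2)=(2,n-2)$ already treated in Proposition~\ref{orbits II1}.
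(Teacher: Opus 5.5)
Your argument is correct and matches the paper's: the corollary is read off directly from the preceding lemma, i.e.\ from the identity $\Gamma\circ u=(u^*)^{-1}\circ\Gamma$ with $(u^*)^{-1}$ given by the inverse transpose, combined with the equality $\{{}^tg^{-1}\mid g\in B'\}=\overline{B'}$, which holds because $B'$ is closed under inversion. Your extra remarks are also correct: $\Gamma$ preserves closures, and $\overline{B'}$ is conjugate to $B'$ by the blockwise order-reversing permutation, which is consistent with the paper describing the closed orbits in this subcase via the vectors $e_{n_i,i}$.
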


\begin{Proposition}{\label{orbits II2}}
	Any flag $D \in \D_{(n-2,2)}$ belongs to the $B'$-orbit of a flag of the form
	\begin{equation*}
		\Gamma^{-1}\left(\begin{bmatrix}
	U_1 & V_1 \\
	U_2 & V_2 \\
	U_3 & V_3
\end{bmatrix}\right)
	\end{equation*}
	where:
	\begin{itemize}[label = --]
		\item each $U_i$ and each $V_i$ is either $0 \in \R^{n_i}$ or a basis vector $e_{j_i,i} \in \R^{n_i}$,
		\item whenever both $U_i$ and $V_i$ are non-zero then $U_i \neq V_i$,
		\item at least one $U_i$ and one $V_j$ are non-zero.
	\end{itemize}
\end{Proposition}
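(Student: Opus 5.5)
The plan is to transport the problem through the duality homeomorphism $\Gamma$ of \eqref{homeo dual} and reduce to \Cref{orbits II1}. By the preceding Corollary, $\Gamma$ identifies $B'$-orbits on $\D_{(n-2,2)}$ with $\overline{B'}$-orbits on $\D_{(2,n-2)}$, where $\overline{B'}$ is the group of block-diagonal \emph{lower} triangular matrices with blocks of sizes $(n_1,n_2,n_3)$. So it suffices to produce, for every $D' \in \D_{(2,n-2)}$, a $\overline{B'}$-equivalent flag of the stated shape, and then apply $\Gamma^{-1}$.

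To handle $\overline{B'}$, I would conjugate by the block-diagonal permutation matrix $w = \operatorname{diag}(w_{n_1}, w_{n_2}, w_{n_3})$, where $w_p$ is the longest element of $\mathfrak S_p$ (reversal of the basis within each block). Since $w\,\overline{B'}\,w^{-1} = B'$, the map $D' \mapsto w D'$ is a homeomorphism of $\D_{(2,n-2)}$ sending $\overline{B'}$-orbits bijectively onto $B'$-orbits. Given $D'$, I apply \Cref{orbits II1} to $wD'$ to obtain a normal form $N$ with $wD' \in B' \cdot N$. Then $D' \in \overline{B'}\cdot (w^{-1}N)$.

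It remains to check that $w^{-1}N$ has the required form. In each block, $w^{-1}$ sends $e_{j,i}$ to $e_{n_i+1-j,i}$ and sends $0$ to $0$. Hence each $U_i, V_i$ of $w^{-1}N$ is again $0$ or a basis vector, distinct nonzero pairs remain distinct, and the nonvanishing conditions are preserved. Applying $\Gamma^{-1}$ then gives $D \in B'\cdot \Gamma^{-1}(w^{-1}N)$, which is the claim.

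The only delicate point is the identification of $\overline{B'}$ with a conjugate of $B'$ that preserves the block structure. A permutation mixing blocks would destroy the $(U_i,V_i)$ shape, so the block-wise longest element is essential; it suffices because each factor $\T(n_i,\R)$ is transposed separately. I also need the earlier Corollary to use the correct group, namely the inverse transpose of $B'$, which is still $\overline{B'}$ since $B'$ is closed under inversion. I expect this step to need the most care, but it is a direct verification. Uniqueness is not part of this statement, so no rank computation is needed here.
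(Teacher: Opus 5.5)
Your reduction is correct, and it is the route the paper takes. The paper gives nothing beyond $\Gamma$, the Corollary identifying $B'$-orbits on $\D_{(n-2,2)}$ with $\overline{B'}$-orbits on $\D_{(2,n-2)}$, and the computation of \Cref{orbits II1}; its closed-orbit corollary, with $e_{n_i,i}$ in place of $e_{1,i}$, is exactly your blockwise reversal $w$. Your argument is also reversible. The map $D\mapsto w\Gamma(D)$ is a bijection $\D_{(n-2,2)}\to\D_{(2,n-2)}$ carrying $B'$-orbits to $B'$-orbits, and $N\mapsto wN$ permutes the admissible normal forms, so the statement is \emph{equivalent} to \Cref{orbits II1}. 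The step that fails is therefore not the conjugation, which is routine despite your worry about it. It is the appeal to \Cref{orbits II1}, which is false as stated, and so the statement you are proving is false too.

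Take $n_1=n_2=n_3=2$ and $\underline m=(4,2)$, so condition $\mathrm{II})$ holds. Let $W\in\D_{(2,4)}$ be spanned by the columns of the $6\times 2$ matrix $C$ with stacked blocks
\begin{equation*}
C_1=\begin{pmatrix}1&0\\0&1\end{pmatrix},\qquad C_2=\begin{pmatrix}0&1\\1&0\end{pmatrix},\qquad C_3=\begin{pmatrix}1&-1\\1&1\end{pmatrix}.
\end{equation*}
Suppose $D=\Gamma^{-1}(W)$ lies in $B'\cdot\Gamma^{-1}(N)$ for some normal form $N$. Then $\bar b\,C\,g=N$ for some $\bar b=\operatorname{diag}(\bar b_1,\bar b_2,\bar b_3)\in\overline{B'}$ and $g\in GL(2,\R)$.

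\begin{itemize}
\item Each $\bar b_iC_ig$ is invertible and its columns are $0$ or basis vectors, so it is $I_2$ or $\left(\begin{smallmatrix}0&1\\1&0\end{smallmatrix}\right)$.
\item Hence one column of $C_ig$ equals $\bar b_i^{-1}e_{2,i}$. Since $\bar b_i^{-1}$ is lower triangular, that column has vanishing first coordinate.
\item The first rows of $C_1,C_2,C_3$ are $(1,0)$, $(0,1)$, $(1,-1)$. So some column $(x,y)$ of $g$ has $x=0$, some has $y=0$, and some has $x=y$.
\item A nonzero vector satisfies at most one of these three conditions, and $g$ has only two columns. This is a contradiction.
\end{itemize}

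The same computation with the second rows $(0,1)$, $(1,0)$, $(1,1)$ shows that $W$ itself violates \Cref{orbits II1}.

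This is not a degenerate case. For a generic plane $W$, the three lines $\{w\in W\mid \text{the first coordinate of the $i$-th block of } w \text{ vanishes}\}$ are pairwise distinct, but a basis of $W$ can meet only two of them. So the open orbit is missing from the list. A correct list must allow a block whose two columns are $e_{a,i}$ and $e_{a,i}+e_{b,i}$ for some $a\neq b$. The later uniqueness and injectivity claims in case $\mathrm{II})$ would then need to be rechecked against the enlarged list.
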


\begin{Theorem}
	The flags described in \Cref{orbits II2} have different invariant ranks. In particular, the normal form of \Cref{orbits II2} is unique and the map
	\begin{equation*}
		\Psi : B'\backslash G/P_{\underline m} \to \displaystyle \prod_{L \in \mathcal Q_{B'}, H \in \mathcal Q_{P_{\underline m}}} L\backslash G/H
	\end{equation*}
	is injective.
\end{Theorem}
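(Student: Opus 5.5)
We reduce this statement to the subcase $(m_1,m_2)=(2,n-2)$ through the duality $\Gamma$ of \eqref{homeo dual}, and do not recompute anything from scratch. By the preceding corollary, $\Gamma$ carries $B'$-orbits on $\D_{(n-2,2)}$ bijectively onto $\overline{B'}$-orbits on $\D_{(2,n-2)}$. Here $\overline{B'}$ consists of block-diagonal lower triangular matrices. Conjugating by the permutation matrix $w$ that reverses the order of the basis vectors inside each block $\R^{n_i}$ (keeping the blocks in place) turns $\overline{B'}$ into $B'$. Thus $D\mapsto w\,\Gamma(D)$ is a homeomorphism $\D_{(n-2,2)}\to\D_{(2,n-2)}$ intertwining the $B'$-actions. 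The normal forms of \Cref{orbits II2} are then transported from those of \Cref{orbits II1}, up to relabeling $e_{j,i}\leftrightarrow e_{n_i+1-j,i}$.

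Next we must show that the invariant ranks of $\D_{(n-2,2)}$ separate these representatives. The key identity relates the ranks of a subspace and of its orthogonal complement. For $W\subset\R^n$ of dimension $m_1$ and $J\subset\{1,\dots,n\}$, we have $\ker(p_J|_{W^\perp})=W^\perp\cap\Span(e_j,\,j\notin J)$, and this space is the orthogonal complement of $W+\Span(e_j,\,j\in J)$. Hence
\begin{equation*}
\rank_J(W^\perp)=\dim W^\perp-\big(n-\dim(W+\R^J)\big)=|J|-m_1+\rank_{J^c}(W),
\end{equation*}
using $\dim(W+\R^J)=|J|+\dim p_{J^c}(W)$. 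So $\rank_J\circ\Gamma$ and $\rank_{J^c}$ differ by a constant. Moreover $J\mapsto J^c$ followed by the relabeling $w$ sends the family $\mathcal J$ of $B'$-invariant index sets (the analogue for three blocks of the set described in Case~0: unions of terminal segments of blocks) to the family of $\overline{B'}$-invariant sets (unions of initial segments), and then back to $\mathcal J$. Consequently the full collection of $B'$-invariant ranks on $\D_{(n-2,2)}$ corresponds bijectively, up to additive constants, to the $B'$-invariant ranks on $\D_{(2,n-2)}$. By the theorem following \Cref{orbits II1}, the latter separate the normal forms, and therefore so do the former.

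Injectivity of $\Psi$ then follows as in Case~0. By \Cref{lemme_rank}, $Q_1gQ_2$ for $Q_1\in\mathcal Q_{B'}$ and $Q_2=P_{\underline m}$ (which is maximal) records exactly the invariant rank $\rank_J(gP_{\underline m})$ with $Q_1=P_J$. Two orbits with the same image under $\Psi$ therefore have the same invariant ranks, hence the same normal form, hence they coincide.

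The main obstacle is the bookkeeping in the complement step. One must check carefully that complementing $J$ together with the relabeling $w$ really preserves the invariant family. For example, $\{i,\dots,n_1\}\cup\{n_1+n_2+j,\dots,n\}$ becomes the set $\{1,\dots,i-1\}\cup(\text{all of block }2)\cup\{\dots\}$, which is again a union of initial segments, including full blocks. I would first verify this on the generating sets of $\mathcal J$, including the empty and full segments, to make sure no invariant rank is lost.
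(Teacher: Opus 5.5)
Your argument is correct relative to the earlier results, and it follows the route the paper intends. The paper treats the subcase $(n-2,2)$ only through the duality $\Gamma$ and the $B'\leftrightarrow\overline{B'}$ correspondence, and your identity $\rank_J(W^\perp)=|J|-m_1+\rank_{J^c}(W)$, together with the blockwise reversal $w$, supplies exactly the rank bookkeeping the paper leaves implicit.

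One caution concerns your final step (``hence the same normal form''). It inherits the completeness of the normal-form list in \Cref{orbits II1}, and that list seems too short. Take $\underline n=(2,2,2)$ and $\underline m=(2,4)$. The open orbit of $\Span\{e_{1,1}+e_{2,2}+e_{1,3}+e_{2,3},\,e_{2,1}+e_{1,2}+e_{2,3}\}$ has injective projections onto all three blocks. Any representative of that shape would therefore have to be the graph of permutation matrices, and those planes lie in smaller orbits. It is cleaner to transport the injectivity of $\Psi$ for $(2,n-2)$ directly through your rank identity, which needs no normal forms at all.
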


\begin{Corollary}
	Closed $B'$-orbits are given by the normal forms
	\begin{equation*}
		\Gamma^{-1}\left(\begin{bmatrix}
	U_1 & V_1 \\
	U_2 & V_2 \\
	U_3 & V_3
\end{bmatrix}\right)
	\end{equation*}
for which exactly one $U_i$ is non zero and equal to $e_{n_i,i}$ and one $V_j$ is non-zero and equal to $e_{n_j,j}$.
\end{Corollary}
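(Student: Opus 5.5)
The plan is to deduce the description of closed orbits in the subcase $(m_1,m_2)=(n-2,2)$ from the already established corollary for the subcase $(2,n-2)$, transported through the homeomorphism $\Gamma$ of \eqref{homeo dual}. Since $\Gamma$ is a homeomorphism intertwining the $B'$-action on $\D_{(n-2,2)}$ with the $\overline{B'}$-action on $\D_{(2,n-2)}$ (by the preceding corollary), a $B'$-orbit $\mathcal O$ is closed if and only if $\Gamma(\mathcal O)$ is a closed $\overline{B'}$-orbit. So the statement reduces to identifying the closed $\overline{B'}$-orbits on $\D_{(2,n-2)}$, where $\overline{B'}$ is the lower triangular block-diagonal Borel subgroup of $G'$.

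Next I would conjugate $\overline{B'}$ back to $B'$ by the permutation matrix $w$ reversing the order of the basis inside each block $\R^{n_i}$ (i.e. $e_{j,i}\mapsto e_{n_i+1-j,i}$). One has $w\overline{B'}w^{-1}=B'$, and $w$ preserves each block $\R^{n_i}$. The map $D\mapsto wD$ is a homeomorphism of $\D_{(2,n-2)}$ sending $\overline{B'}$-orbits to $B'$-orbits, and it sends a matrix whose columns have block components $0$ or $e_{j,i}$ to one with components $0$ or $e_{n_i+1-j,i}$. Hence the normal forms of \Cref{orbits II2} are exactly $\Gamma^{-1}w^{-1}$ of the normal forms of \Cref{orbits II1}, and the closed $\overline{B'}$-orbits are $w^{-1}$ of the closed $B'$-orbits described in the previous corollary: exactly one $U_i$ nonzero equal to $e_{1,i}$ and one $V_j$ nonzero equal to $e_{1,j}$. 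Applying $w^{-1}$ turns $e_{1,i}$ into $e_{n_i,i}$, which gives the claimed list.

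The only step requiring care is consistency of conventions: checking that the normal forms of \Cref{orbits II2} (written directly as $\Gamma^{-1}$ of matrices with basis-vector entries) match $\Gamma^{-1}w^{-1}$ of those of \Cref{orbits II1}, so that the uniqueness statement (different invariant ranks) lets us identify closed orbits by their normal form. Alternatively, if one prefers not to rely on the conjugation, I would redo the argument of the $(2,n-2)$ corollary directly for $\overline{B'}$: fixed points are closed, and for any other normal form one produces a one-parameter family inside the orbit (scaling a column entry by $1/N$, or adding $\frac1N$ times a lower-index basis vector, which is now allowed since $\overline{B'}$ is lower triangular, so one moves towards the \emph{largest} index $e_{n_i,i}$) converging to a different normal form, contradicting closedness via uniqueness.
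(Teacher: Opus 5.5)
Your reduction is sound and follows the duality route the paper sets up; the paper gives no separate proof, only $\Gamma$ and the correspondence between $B'$-orbits on $\D_{(n-2,2)}$ and $\overline{B'}$-orbits on $\D_{(2,n-2)}$. Conjugating by the blockwise order-reversing permutation $w$ is a clean way to justify the replacement $e_{1,i}\mapsto e_{n_i,i}$. The compatibility of normal forms you worry about is automatic, since $w$ only permutes the admissible shapes.

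The problem is the input. The list of closed orbits you import from the subcase $(2,n-2)$ is incomplete, so the list you produce is incomplete too, and the statement as written cannot be proved.

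In Case II every $n_i\geqslant 2$, so $\Span(e_{1,i},e_{2,i})$ is a $B'$-stable $2$-plane, hence a fixed point and a closed orbit. Its normal form is $U_i=e_{1,i}$, $V_i=e_{2,i}$ with all other blocks zero. This is not of the listed shape: that shape forces $i\neq j$, because $U_i=V_i$ is excluded. In fact the $B'$-fixed points of $\D_{(2,n-2)}$ are the planes $\bigoplus_i\Span(e_{1,i},\dots,e_{k_i,i})$ with $k_1+k_2+k_3=2$. There are six of them, and the listed forms account only for the three with $k_i=k_j=1$, $i\neq j$.

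Transported by $w$ and $\Gamma^{-1}$, this gives three more fixed points. For each $i$, the $(n-2)$-plane $\Gamma^{-1}\big(\Span(e_{n_i-1,i},e_{n_i,i})\big)=\bigoplus_{j\neq i}\R^{n_j}\oplus\Span(e_{1,i},\dots,e_{n_i-2,i})$ is $B'$-fixed. These are closed $B'$-orbits missing from the statement.

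Your fallback direct argument would have exposed this. Its key step, that for any other normal form one finds a one-parameter family in the orbit converging to a different normal form, fails for the normal form with $U_i=e_{n_i-1,i}$, $V_i=e_{n_i,i}$, whose orbit is a single point.

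The repair has two parts:
\begin{itemize}
\item Enumerate the fixed points first. An $(n-2)$-plane stable under the diagonal torus is a coordinate subspace. Stability under the unipotent part then forces initial segments $\Span(e_{1,i},\dots,e_{k_i,i})$ in each block. Finally take orthogonal complements.
\item Run the degeneration argument only on normal forms that are not fixed.
\end{itemize}
With that correction, your $\Gamma$/$w$ transport correctly yields six closed orbits: $\Gamma^{-1}\big(\Span(e_{n_i,i},e_{n_j,j})\big)$ for $i\neq j$, and $\Gamma^{-1}\big(\Span(e_{n_i-1,i},e_{n_i,i})\big)$ for $i=1,2,3$.
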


\subsection[Case III)]{Case $\mathrm{III})$} \label{cas 3}

We let $\underline n = (n_1,\dots,n_k)$ and $\underline m = (m_1,m_2)$ be compositions of $n$ satisfying $\min(m_1,m_2) = 1$ and we consider
\begin{equation*}
	B' = \left\{\begin{pmatrix}
	a_1 &  &  \\
	& \ddots & \\
	&  & a_k
\end{pmatrix} \mid \forall i \in \{1,\dots,k\}, \, a_i \in \T(n_i,\R),\right\} \subset GL(n,\R).
\end{equation*}
We distinguish between two subcases: $(m_1,m_2) = (1,n-1)$ and $(m_1,m_2) = (n-1,1)$.

\subsubsection*{Subcase $(m_1,m_2) = (1,n-1)$}

Following the $(GL(n,\R)/P_{\underline n})$-realization of $\D_{\underline n}$ from \Cref{flag varieties}, we will write any $D \in \D_{\underline m}$ as
\begin{equation*}
	D = \,^t\begin{bmatrix}
	U_1 & \cdots & U_k
\end{bmatrix}
\end{equation*}
with $U_1 \in M_{1,n_1}(\R),\dots, U_k \in M_{1,n_k}(\R)$. Moreover, for each $1 \leqslant i \leqslant k$, we let $e_{1,i},\dots,e_{n_i,i}$ be the canonical basis of $\R^{n_i}$.

\begin{Proposition}{\label{orbits III1}}
	Any flag $D \in \D_{(1,n-1)}$ belongs to the $B'$-orbit of a flag of the form
	\begin{equation*}
		\,^t\begin{bmatrix}
		U_1 & \cdots & U_k
	\end{bmatrix}
	\end{equation*}
	where each $U_i$ is either $(0,\dots,0) \in M_{1,n_i}(\R)$ or a basis vector $e_{j_i,i} \in \R^{n_i}$ with at least one non-zero $U_i$.
\end{Proposition}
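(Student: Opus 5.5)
We view a flag $D \in \D_{(1,n-1)}$ as a line $\R v$ with $v = {}^t(U_1,\dots,U_k) \neq 0$, and we reduce each block $U_i \in \R^{n_i}$ separately. The point is that $B'$ is block diagonal with blocks $a_i \in \T(n_i,\R)$, so $g \in B'$ acts on $v$ by $U_i \mapsto a_i U_i$ independently for each $i$. The right $P_{(1,n-1)}$-action only rescales $v$ by a nonzero scalar, and we will not even need it.

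Fix $i$ with $U_i \neq 0$. Let $j_i$ be the largest index such that the $e_{j_i,i}$-coordinate $x$ of $U_i$ is nonzero, so $U_i = x\, e_{j_i,i} + \sum_{p<j_i} y_p e_{p,i}$. We then choose $a_i \in \T(n_i,\R)$ with $a_i e_{j_i,i} = x^{-1}\bigl(e_{j_i,i} - \sum_{p<j_i} x^{-1}y_p e_{p,i}\bigr)$ and all other columns equal to the corresponding basis vectors. This matrix is upper triangular because the $j_i$-th column has nonzero entries only in rows $p \leqslant j_i$, and its diagonal entries are nonzero. Equivalently, we take $a_i = t^{-1}$, where $t \in \T(n_i,\R)$ is the identity with its $j_i$-th column replaced by $U_i$; this $t$ is upper triangular precisely because $U_i$ has no coordinates below $j_i$. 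Then $t e_{j_i,i} = U_i$, so $a_i U_i = e_{j_i,i}$. This is exactly the special case $q=1$ of \Cref{lemme_fondamental} applied to the single column $U_i$, with the $GL(1,\R)$-factor unnecessary.

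For the blocks with $U_i = 0$ we take $a_i = I_{n_i}$, and we assemble $g = \operatorname{diag}(a_1,\dots,a_k) \in B'$. Then $g\cdot D$ has the stated form: each $U_i$ is either $0$ or a basis vector $e_{j_i,i}$. At least one $U_i$ is nonzero because $v \neq 0$ and $g$ is invertible.

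There is no real obstacle in this argument. The only point requiring care is that the row-reduction stays inside $\T(n_i,\R)$, and this is guaranteed by choosing the pivot $j_i$ as the lowest nonzero coordinate. Uniqueness of the normal form, which is not part of this statement, would follow from the invariant ranks $\rank_{J}$ with $J = \{p,\dots,n_i\}$ shifted into the $i$-th block, and these detect $j_i$.
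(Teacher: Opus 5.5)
Your argument is correct and is essentially the paper's approach: the paper states this proposition without a separate proof, but your reduction is the upper-triangular elimination of \Cref{lemme_fondamental}, applied blockwise to a single column and pivoting on the last nonzero coordinate of each $U_i$. One slip: the explicit $j_i$-th column should be $a_i e_{j_i,i} = x^{-1}\bigl(e_{j_i,i} - \sum_{p<j_i} y_p e_{p,i}\bigr)$, since with the extra factor $x^{-1}$ on the $y_p$ you get $a_iU_i \neq e_{j_i,i}$ unless $x=1$; your description $a_i = t^{-1}$ is correct, though, and is all the proof needs.
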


\begin{Theorem}
	The flags described in \Cref{orbits III1} have different invariant ranks. In particular, the normal form of \Cref{orbits III1} is unique and the map
	\begin{equation*}
		\Psi : B'\backslash G/P_{\underline m} \to \displaystyle \prod_{\substack{L \in \mathcal Q_{B'} \\ H \in \mathcal Q_{P_{\underline m}}}} L\backslash G/H
	\end{equation*}
	is injective.
\end{Theorem}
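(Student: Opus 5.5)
Here $D \in \D_{(1,n-1)}$ is a line in $\R^n$, and the normal forms of \Cref{orbits III1} are the lines spanned by $v = \sum_{i \in I} e_{j_i,i}$. Here $I \subset \{1,\dots,k\}$ is non-empty and $j_i \in \{1,\dots,n_i\}$ for each $i \in I$. I will show that the pair $(I,(j_i)_{i \in I})$ can be read off from $B'$-invariant rank functions $\rank_J$. By \Cref{lemme_rank}, $\rank_J$ is a complete invariant for $P_J\backslash G/P_{(1,n-1)}$, so distinct values of these ranks give distinct images under $\Psi$. This proves that the normal form is unique and that $\Psi$ is injective.

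First I identify which $J$ give $B'$-invariant ranks. Write $J_{i,p} \subset \{1,\dots,n\}$ for the set of indices of the coordinates $e_{p,i},\dots,e_{n_i,i}$ inside the $i$-th block, with $1 \leqslant p \leqslant n_i$. The group $P_{J}$ contains $B'$ exactly when $J$ is stable under each upper triangular factor $T(n_i,\R)$. By the same reasoning as in the case $0)$ description of $\mathcal J$, these are precisely the unions of "tails" $J_{i,p_i}$ taken over a subset of the blocks. In particular, every single tail $J_{i,p}$ gives a $B'$-invariant rank function.

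Next I compute the invariants on a normal form. For a line, $\rank_J(D) \in \{0,1\}$, and it equals $1$ if and only if $v$ has a non-zero coordinate indexed by $J$. For the normal form attached to $(I,(j_i))$ this gives
\begin{equation*}
\rank_{J_{i,p}}(D) = \begin{cases} 1 & \text{if } i \in I \text{ and } p \leqslant j_i,\\ 0 & \text{otherwise.}\end{cases}
\end{equation*}
So $I$ is the set of $i$ with $\rank_{J_{i,1}}(D) = 1$. For $i \in I$, the index $j_i$ is the largest $p$ with $\rank_{J_{i,p}}(D) = 1$. Two distinct normal forms therefore differ in at least one $\rank_{J_{i,p}}$, and since these ranks are $B'$-invariant the two forms lie in distinct $B'$-orbits. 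This gives uniqueness.

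For injectivity, suppose $\Psi(B'gP_{(1,n-1)}) = \Psi(B'hP_{(1,n-1)})$. Then for each tail, $P_{J_{i,p}}gP_{(1,n-1)} = P_{J_{i,p}}hP_{(1,n-1)}$, so by \Cref{lemme_rank} all the ranks $\rank_{J_{i,p}}$ agree on $g$ and $h$. By \Cref{orbits III1} each coset has a normal form, and the previous step shows these ranks determine that normal form. Hence $g$ and $h$ have the same normal form and the two $B'$-orbits coincide. The only step requiring care is the identification of the invariant subsets $J$, and even there only the easy direction is needed: that each tail $J_{i,p}$ is $B'$-stable, which holds because upper triangular matrices preserve $\Span(e_{p,i},\dots,e_{n_i,i})$ and $B'$ is block diagonal.
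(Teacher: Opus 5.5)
Your strategy is correct and matches the paper's. The paper gives no separate argument for this case, but its proof of \Cref{normal form unique} in Case $0)$ reads off the indices of a normal form from jumps of the $B'$-invariant tail ranks, which is what you do with $\rank_{J_{i,p}}$. Your rank table, the recovery of $I$ and of the $j_i$, and the deduction of injectivity of $\Psi$ are all fine. As you note, they only need each $\rank_{J_{i,p}}$ to be constant on the double cosets of a suitable member of $\mathcal Q_{B'}$.

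The one thing to fix is your justification of that invariance, which is false as written. An upper triangular matrix does not preserve $\Span(e_{p,i},\dots,e_{n_i,i})$ for $p\geqslant 2$: already $\begin{pmatrix}1&1\\0&1\end{pmatrix}$ sends $e_2$ to $e_1+e_2$. It preserves the initial segments $\Span(e_{1,i},\dots,e_{p-1,i})$ instead.

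The correct mechanism goes through the complement of $J$. Since $\rank_J(D)=\dim D-\dim\big(D\cap\Span(e_j : j\notin J)\big)$, the function $\rank_J$ is invariant under the stabilizer of $\Span(e_j : j\notin J)$. This subspace is the kernel of the coordinate projection $p_J$, and its stabilizer is the maximal parabolic attached to $J$. For $J=J_{i,p}$ the kernel is spanned by $e_{1,i},\dots,e_{p-1,i}$ together with all basis vectors of the other blocks, and $B'$ does preserve it.

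Equivalently, for $b\in B'$ the entries $b_{rs}$ with $r\in J_{i,p}$ and $s\notin J_{i,p}$ vanish. Such an $s$ lies either in another block or earlier in block $i$. Hence the rows of $bA$ indexed by $J_{i,p}$ are an invertible upper triangular matrix times the rows of $A$ indexed by $J_{i,p}$, and their rank is unchanged.

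So your criterion for when $P_J$ contains $B'$ should be stated in terms of $B'$-stability of the coordinate subspace spanned by the complement of $J$, not by $J$. The displayed description of $H$ in \Cref{subsection invariant} has the same transposition, whereas the list $\mathcal J$ of tails in Case $0)$ is the right one. With this correction your proof is complete.
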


\begin{Corollary}
	Closed orbits are given by the normal forms
	\begin{equation*}
		\,^t\begin{bmatrix}
		U_1 & \cdots & U_k
	\end{bmatrix}
	\end{equation*}
	for which a unique $U_i$ is non-zero and equal to $e_{1,i}$.
\end{Corollary}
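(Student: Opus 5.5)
The statement to prove is the last Corollary: in Case III) with $(m_1,m_2)=(1,n-1)$, the closed $B'$-orbits are exactly the normal forms of \Cref{orbits III1} in which a unique $U_i$ is nonzero and equals $e_{1,i}$. I would follow the pattern of the closed-orbit corollaries of \Cref{Theorem B'} and \Cref{orbits I}. The argument has two directions: such normal forms are $B'$-fixed, hence closed, and every other normal form has a different normal form in its closure.

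\emph{Fixed points.} A flag $D\in\D_{(1,n-1)}$ is a line. Take the line spanned by $e_{1,i}$, sitting in the $i$-th block. Every element of $B'$ is block diagonal with upper triangular blocks $a_j\in\T(n_j,\R)$. Such an element sends $e_{1,i}$ to $(a_i)_{11}e_{1,i}$, so it preserves the line. The orbit is therefore a single point and is closed.

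\emph{Other normal forms are not closed.} Take a normal form ${}^t[U_1\ \cdots\ U_k]$ with each $U_i$ either $0$ or $e_{j_i,i}$, and let $I=\{i\mid U_i\neq 0\}$. I would split into two cases.

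\begin{enumerate}
\item Suppose $|I|\geqslant 2$. Pick $i_0\in I$ and act by the element of $B'$ that is the identity except for the scalar $1/N$ on the whole $i_0$-th block. This multiplies $U_{i_0}$ by $1/N$ and leaves the other $U_i$ unchanged, so the line stays in the orbit. As $N\to+\infty$, these lines converge in $\D_{(1,n-1)}$ to the line obtained by setting $U_{i_0}=0$. That limit is still a nonzero vector, because another $U_i$ is nonzero. It is a normal form with a different support.
\item Suppose $I=\{i\}$ and $U_i=e_{j,i}$ with $j>1$. The element of $\T(n_i,\R)$ with $(1,j)$ entry $N$ (and identity elsewhere) maps $e_{j,i}$ to $e_{j,i}+Ne_{1,i}$. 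This gives the same line as $e_{1,i}+\frac1N e_{j,i}$, which tends to $e_{1,i}$ as $N\to+\infty$.
\end{enumerate}

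In both cases the limit is a normal form different from the starting one. By the uniqueness part of the preceding Theorem, distinct normal forms lie in distinct orbits, so the limit lies outside the orbit and the orbit is not closed.

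\emph{Main obstacle.} The only point needing care is that the limits are taken in the projective space $\D_{(1,n-1)}$. One must check that the limiting vector is nonzero, which holds in both cases as noted above. Once that is verified, the rest is routine.
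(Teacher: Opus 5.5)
Your proof is correct and follows the same scheme the paper uses for its closed-orbit corollaries, such as the one accompanying the Grassmannian normal form theorem: the listed forms are $B'$-fixed points, and every other normal form is degenerated, through an explicit family in $B'$, to a different normal form, which lies in another orbit by uniqueness of normal forms. The paper states this particular corollary without a written proof, and your two-case argument ($|I|\geqslant 2$, or $I=\{i\}$ with $j>1$) supplies it exactly along those lines.
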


\subsubsection*{Subcase $(m_1,m_2) = (n-1,1)$}

Following the same duality trick used in \Cref{dualité}, we identify the $B'$-orbits on $\D_{(n-1,1)}$ with the $\overline{B'}$-orbits on $\D_{(1,n-1)}$ using the map
\begin{equation*}
	\Gamma : \begin{array}{ccc}
		\D_{(n-1,1)} & \to & \D_{(1,n-1)} \\
		(V_1 \subset \R^n) & \mapsto & (V_1^\perp \subset \R^n)
	\end{array}
\end{equation*}

\begin{Proposition}{\label{orbits III2}}
	Any flag $D \in \D_{(n-1,1)}$ belongs to the $B'$-orbit of a flag of the form
	\begin{equation*}
		\Gamma^{-1}\left(\,^t\begin{bmatrix}
		U_1 & \cdots & U_k
	\end{bmatrix}\right)
	\end{equation*}
	where each $U_i$ is either $(0,\dots,0) \in M_{1,n_i}(\R)$ or a basis vector $e_{j,i} \in \R^{n_i}$ with at least one non-zero $U_i$.
\end{Proposition}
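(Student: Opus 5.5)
We reduce the statement to \Cref{orbits III1} via the duality map $\Gamma$, exactly as in the subcase $(m_1,m_2)=(n-2,2)$ of \Cref{dualité}. By \Cref{lemme adjoint}, $\Gamma(u\cdot V_1) = (u^*)^{-1}\cdot\Gamma(V_1)$, where $u^*$ is the Euclidean adjoint of $u$. Hence $\Gamma$ maps $B'$-orbits on $\D_{(n-1,1)}$ bijectively onto orbits on $\D_{(1,n-1)}$ of the group $\{\,^tg^{-1} \mid g\in B'\}=\overline{B'}$. Here $\overline{B'}$ is the block diagonal group whose blocks lie in the lower triangular groups $\,^t\T(n_i,\R)$. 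So it suffices to show that every line in $\R^n$ is $\overline{B'}$-equivalent to a line spanned by $\,^t[U_1\ \cdots\ U_k]$, where each $U_i$ is either $0$ or a basis vector of $\R^{n_i}$.

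To do this, take a nonzero vector $x=\,^t(x_1,\dots,x_k)$ with $x_i\in\R^{n_i}$ and treat each block separately, since $\overline{B'}$ acts blockwise. For each $i$ with $x_i\neq 0$, let $p_i$ be the \emph{smallest} index with a nonzero coordinate in $x_i$. A lower triangular matrix can add multiples of row $p_i$ to every row $q>p_i$. It can also rescale row $p_i$. Since all rows $q<p_i$ are already zero, some element of $\,^t\T(n_i,\R)$ sends $x_i$ to $e_{p_i,i}$. This is the transposed analogue of \Cref{lemme_fondamental} in the rank-one case: the largest index is replaced by the smallest because the triangularity is reversed. Blocks with $x_i=0$ are left unchanged. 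Doing this in every block at once gives the normal form. At least one $U_i$ is nonzero because $x\neq 0$. Applying $\Gamma^{-1}$ yields the stated representative of the $B'$-orbit of $D$.

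The only point requiring care is the bookkeeping of $\Gamma$. One must check that $(u^*)^{-1}$ for $u\in B'$ is indeed the inverse transpose, so that the relevant group is lower block-triangular in each factor. The reduction step itself is elementary. As a consistency check, the normal forms here are the \Cref{orbits III1} normal forms with $e_{1,i}$ swapped for the last-index roles. Uniqueness follows from the invariants $\rank$ transported through $\Gamma$, but that is not needed for the existence statement.
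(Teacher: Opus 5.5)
Your proof is correct and follows the paper's route. The paper likewise uses the duality map $\Gamma$ to identify $B'$-orbits on $\D_{(n-1,1)}$ with $\overline{B'}$-orbits on lines in $\D_{(1,n-1)}$, and it leaves the blockwise reduction implicit. Your argument supplies that reduction correctly: each nonzero block is sent to $e_{p_i,i}$, where $p_i$ is its smallest nonzero index, using lower triangular row operations.
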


\begin{Theorem}
	The flags described in \Cref{orbits III2} have different invariant ranks. In particular, the normal form of \Cref{orbits III2} is unique and the map
	\begin{equation*}
		\Psi : B'\backslash G/P_{\underline m} \to \displaystyle \prod_{\substack{L \in \mathcal Q_{B'} \\ H \in \mathcal Q_{P_{\underline m}}}} L\backslash G/H
	\end{equation*}
	is injective.
\end{Theorem}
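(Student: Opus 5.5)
The plan is to reduce to the subcase $(m_1,m_2)=(1,n-1)$ through the duality $\Gamma$. The flags of \Cref{orbits III2} are the $\Gamma^{-1}$-images of the normal forms of \Cref{orbits III1}, except that $B'$ is replaced by $\overline{B'}=\{{}^tg \mid g\in B'\}$. The group $\overline{B'}$ is block-diagonal with lower triangular blocks. Conjugating by the permutation $w_0'$ that reverses the order inside each block $\{n_1+\cdots+n_{i-1}+1,\dots,n_1+\cdots+n_i\}$ turns $\overline{B'}$ back into $B'$. Hence the $\overline{B'}$-orbits on $\D_{(1,n-1)}$ are the $B'$-orbits of \Cref{orbits III1} transported by $w_0'$, which sends $e_{j,i}$ to $e_{n_i+1-j,i}$.

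Uniqueness then reduces to showing that the lines
\[
\ell=\Span\Big(\sum_{i\in I} e_{j_i,i}\Big),\qquad I\neq\emptyset,
\]
are separated by $\overline{B'}$-invariant ranks. I will prove this directly rather than through the injectivity statement of case III$)(1,n-1)$, whose proof is not written out above.

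For a line, $\rank_J(\ell)\in\{0,1\}$, and it equals $1$ exactly when some coordinate indexed by $J$ is nonzero. The sets $J$ for which $\rank_J$ is $\overline{B'}$-invariant include the "initial segments" inside each block,
\[
J_{i,p}=\{n_1+\cdots+n_{i-1}+1,\dots,n_1+\cdots+n_{i-1}+p\}.
\]
Applied to the normal form, $\rank_{J_{i,p}}$ jumps from $0$ to $1$ exactly at $p=j_i$ when $U_i\neq0$, and is identically $0$ when $U_i=0$. So the data $(I,(j_i)_{i\in I})$, and with it the normal form, is recovered from these ranks.

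It remains to pass these invariants back through $\Gamma$ to $\D_{(n-1,1)}$ and to identify them with ranks coming from $\mathcal Q_{B'}$. First note that $\rank_J(V^\perp)=\dim p_J(V^\perp)=|J|-\dim(V\cap \Span(e_j,j\in J))$. Moreover,
\[
\dim(V\cap\Span(e_j, j\in J))=\dim V+|J|-\dim(V+\Span(e_j,j\in J)),
\]
and the last dimension equals $\dim V + \rank_{J^c}$ computed on the quotient, in other words it is determined by $\rank_{J^c}(V)$. Consequently $\rank_J\circ\Gamma$ is an affine function of $\rank_{J^c}$ on $\D_{(n-1,1)}$. The complements $J_{i,p}^c$ are exactly the sets whose associated parabolic $P_{J^c}$ contains $B'$. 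This matches the general fact that $Q\mapsto {}^tQ^{-1}$ exchanges $\mathcal Q_{\overline{B'}}$ and $\mathcal Q_{B'}$ while complementing $J$.

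Two distinct normal forms therefore differ in some $\rank_{J^c}$ with $P_{J^c}\in\mathcal Q_{B'}$. By \Cref{lemme_rank} they have different images under the corresponding factor $P_{J^c}\backslash G/P_{\underline m}$ of $\Psi$, so $\Psi$ is injective.

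I expect the main obstacle to be the bookkeeping in the duality. One has to get the complement and the reversal of the within-block order right, and check that the rank identity under $\perp$ really produces $B'$-invariant sets, that is, sets of the form "terminal segment of each block". Everything else is immediate once that identity is in place.
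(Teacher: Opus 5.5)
Your argument is correct and follows the route the paper indicates but does not write out for this subcase. You pass to $\overline{B'}$-orbits of lines via $\Gamma$, separate the lines by the $\overline{B'}$-invariant ranks $\rank_{J_{i,p}}$, and then convert these into $B'$-invariant ranks on $\D_{(n-1,1)}$ via $\rank_J(V^\perp)=|J|-\dim V+\rank_{J^c}(V)$, in the spirit of the proof of \Cref{normal form unique}. There is one slip in the derivation of that identity: $\dim(V+E_J)$, with $E_J=\Span(e_j, j\in J)$, equals $|J|+\rank_{J^c}(V)$, not $\dim V+\rank_{J^c}(V)$, and this corrected value is what gives the identity; the conclusion is unaffected, and the $J_{i,p}^c$ are among (not exactly) the $B'$-invariant index sets, which is all you need.
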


\begin{Corollary}
	Closed orbits are given by the normal forms $\Gamma^{-1}\left(\,^t\begin{bmatrix}
		U_1 & \cdots & U_k
	\end{bmatrix}\right)$
	for which a unique $U_i$ is non-zero and equal to $e_{n_i,i}$.
\end{Corollary}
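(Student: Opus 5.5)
The plan is to transport the problem through the duality homeomorphism $\Gamma : \D_{(n-1,1)} \to \D_{(1,n-1)}$ and to reduce to the case of lines. By \Cref{lemme adjoint}, $\Gamma$ intertwines the action of $u \in B'$ on $\D_{(n-1,1)}$ with the action of $(u^*)^{-1}$ on $\D_{(1,n-1)}$. The group $\{(u^*)^{-1} \mid u \in B'\}$ is exactly $\overline{B'}$, the block-diagonal group whose blocks are lower triangular in each $GL(n_i,\R)$. Since $\Gamma$ is a homeomorphism, it sends closed $B'$-orbits to closed $\overline{B'}$-orbits. The statement therefore reduces to the following claim: the closed $\overline{B'}$-orbits on the projective space $\D_{(1,n-1)}$ are precisely the lines $\,^t[U_1 \ \cdots \ U_k]$ in which a unique $U_i$ is nonzero and equals $e_{n_i,i}$. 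The normal forms are those of \Cref{orbits III2}, and their uniqueness comes from the theorem preceding the corollary.

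\emph{Sufficiency.} Consider a line spanned by $e_{n_i,i}$, placed in the $i$-th block. Every element of $\overline{B'}$ acts on that block by a lower triangular matrix, and such a matrix sends the last basis vector $e_{n_i,i}$ to a multiple of itself. Hence this line is a $\overline{B'}$-fixed point. Its orbit is a singleton, so it is closed.

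\emph{Necessity.} Take a normal form $D = \,^t[U_1 \ \cdots \ U_k]$ whose $\overline{B'}$-orbit $\mathcal O$ is closed. I would rule out the other normal forms in two steps, each producing a degeneration.
\begin{enumerate}[label = (\alph*)]
	\item Suppose two components $U_i, U_{i'}$ are nonzero. Scaling the $i$-th block by $\frac1N$, which is allowed since diagonal matrices lie in $\overline{B'}$, gives lines in $\mathcal O$. These converge, as $N \to +\infty$, to the line obtained by setting $U_i = 0$. The limit is another normal form, and it lies in a different orbit because normal forms are unique. This contradicts closedness, so exactly one $U_i$ is nonzero, say $U_i = e_{j,i}$.
	\item Suppose $j < n_i$. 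The lower triangular block contains the elementary matrix adding $N$ times the $j$-th coordinate to the $n_i$-th. So $\mathcal O$ contains the line spanned by $e_{j,i} + N e_{n_i,i}$, which equals the line spanned by $\frac1N e_{j,i} + e_{n_i,i}$. As $N \to +\infty$, this converges to the line spanned by $e_{n_i,i}$, again a distinct normal form. This contradicts closedness, so $j = n_i$.
\end{enumerate}

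Pulling back by $\Gamma^{-1}$ then gives the statement. It also shows, as in the other cases, that the closed orbits are exactly the $B'$-fixed points.

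The only delicate point is justifying that each limit lies in a different orbit. This is where I rely on the uniqueness of the normal forms of \Cref{orbits III2}. Alternatively, the lower semicontinuity of the invariant ranks shows directly that some rank drops in the limit.
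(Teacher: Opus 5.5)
Your proof is correct and takes essentially the paper's route. You transport the problem by $\Gamma$ to the block-diagonal lower-triangular group $\overline{B'}$ acting on lines, note that the lines spanned by $e_{n_i,i}$ are fixed, and exclude every other normal form by a degeneration as $N \to +\infty$ to a distinct normal form, using uniqueness in \Cref{orbits III2}. The paper writes no separate proof of this corollary, but your two degenerations (scaling a block to zero, then pushing $e_{j,i}$ to $e_{n_i,i}$ with a lower unipotent element) are the direct analogues of the ones it gives for the closed-orbit corollary of \Cref{Theorem B'}.
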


\subsection[Case I')]{Case $\mathrm{I}')$} \label{cas 1'}

We let $\underline n = (n_1,n_2)$ and $\underline m = (m_1,m_2,m_3)$ be compositions of $n$ satisfying $\min(n_1,n_2) \geqslant 2$ and $\min(m_1,m_2,m_3) = 1$ and we consider
\begin{equation*}
	B' = \left\{\begin{pmatrix}
	a_1 & \\
	& a_2
\end{pmatrix} \mid a_1 \in \T(n_1,\R), a_2 \in \T(n_2,\R)\right\} \subset GL(n,\R).
\end{equation*}
We split between three subcases:
\begin{equation*}
	m_1 = 1 \text{ and } m_2 \neq 1, \hspace{5mm} m_2 = 1, \hspace{5mm} m_3 = 1 \text{ and } m_2 \neq 1.
\end{equation*}

\subsubsection*{Subcase $m_1 = 1$, $m_2 \neq 1$}

Following the $(GL(n,\R)/P_{\underline n})$-realization of $\D_{\underline n}$ from \Cref{flag varieties}, we will write any $D \in \D_{\underline m}$ as
\begin{equation*}
	D = \left[\begin{tabular}{c|ccc}
	$U$ & $U_1$ & $\cdots$ & $U_{m_2}$  \\
	$V$ & $V_1$ & $\cdots$ & $V_{m_2}$ 
\end{tabular}\right]
\end{equation*}
with $U,U_1,\dots, U_{m_2} \in \R^{n_1}$, $V,V_1,\dots,V_{m_2} \in \R^{n_2}$.

\begin{Theorem}{\label{contre exemple}}
	If $n \geqslant 5$, the map $\Psi : B'\backslash G/P_{\underline m} \to \displaystyle \prod_{\substack{L \in \mathcal Q_{B'} \\ H \in \mathcal Q_{P_{\underline m}}}} L\backslash G/H$ is not injective.
\end{Theorem}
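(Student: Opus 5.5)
The plan is to exhibit two flags with the same invariant ranks that lie in different $B'$-orbits. First I would make the target of $\Psi$ explicit. With $l=3$ and $m_1=1$, the set $\mathcal Q_{P_{\underline m}}$ consists of the two maximal parabolic subgroups containing $P_{\underline m}$, namely $P_{(1,n-1)}$ and $P_{(1+m_2,m_3)}$. So the invariant ranks of $D$ are $\rank_{J,1}(D)$ and $\rank_{J,2}(D)$, where $J$ runs over the $B'$-admissible family $\mathcal J$ of \Cref{cas 0}. By \Cref{normal form unique} (case $0)$), these ranks record exactly the $B'$-orbit of the line $D_1$ and the $B'$-orbit of the subspace $D_2$, each taken separately. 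Hence $\Psi$ is injective if and only if the $B'$-orbit of the pair $(D_1\subset D_2)$ is determined by the two individual orbits. It therefore suffices to find a subspace $W$ and two lines $L,L'\subset W$ that lie in the same $B'$-orbit of lines but are not exchanged by the stabilizer $\mathrm{Stab}_{B'}(W)$.

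The basic example is $n=5$ with $(n_1,n_2)=(2,3)$ and $\underline m=(1,2,2)$. Here $(e_1,e_2)$ and $(f_1,f_2,f_3)$ are the canonical bases of the two blocks, and I take
\begin{equation*}
W=\Span(e_2,\,f_3,\,e_1+f_2),\qquad L=\Span(e_2+f_3),\qquad L'=\Span(e_2+f_3+e_1+f_2).
\end{equation*}
\begin{itemize}[label = --]
\item \emph{Same invariants.} By \Cref{normal form unique}, the $B'$-orbit of a line is determined by the highest nonzero index in each block. Both $L$ and $L'$ have tops $e_2$ and $f_3$, so $\rank_{J,1}$ agrees for all $J\in\mathcal J$. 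Also $\rank_{J,2}$ agrees trivially because $D_2=W$ in both flags.
\item \emph{Different orbits.} Let $g=\mathrm{diag}(a,b)\in B'$ with $gW=W$. Since $e_1\notin W$, we have $W\cap\Span(e_1,e_2)=\Span(e_2)$, so $g e_2\in\Span(e_2)$. Since $f_1,f_2\notin W$, we have $W\cap\Span(f_1,f_2,f_3)=\Span(f_3)$, so $gf_3\in\Span(f_3)$. Hence $gL\subset\Span(e_2,f_3)$, which never equals $L'$. Therefore $(L\subset W)$ and $(L'\subset W)$ are distinct $B'$-orbits with equal image under $\Psi$.
\end{itemize}

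For general $n_1,n_2\geqslant 2$ with $n\geqslant 5$, I would embed this configuration. Choose the block with at least $3$ coordinates to carry the indices $1,2,3$; if $n_2=2$, swap the roles of the two blocks, which is possible since $n_1\geqslant 3$ in that case. Then enlarge $W$ by $m_2-2$ extra basis vectors $e_j$ ($j\geqslant 3$) or $f_j$ ($j\geqslant 4$), all avoiding $e_1$ and $f_2$. The same intersection argument still gives $W\cap(\text{first block})\subset\Span(e_2,\text{padding})$ and $W\cap(\text{second block})\subset\Span(f_3,\text{padding})$. The component of $gL$ then stays inside the span of these vectors, while $L'$ has a nonzero $e_1$-coordinate, so the two flags remain in different orbits. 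I would also check that the padding does not change the tops of $L$ and $L'$, so the invariant-rank computation above carries over unchanged.

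The main obstacle is the count of available padding vectors: there are $n-5$ of them, which is enough only when $m_2\leqslant n-3$, that is, when $m_3\geqslant 2$. In the extreme case $m_3=1$, $W$ is a hyperplane and the padding fails. There I would first try a dual choice: take $W$ to be the hyperplane orthogonal to a vector such as $e_1-f_2$, or, in the spirit of the duality trick $\Gamma$ of \Cref{dualité}, pick the configuration so that $W$ meets each block in a $B'$-stable flag piece. Then $L,L'\subset W$ are chosen with equal tops but differing in a coordinate that $\mathrm{Stab}_{B'}(W)$ cannot reach, and non-conjugacy is again verified by computing $W\cap(\text{block})$. Getting this hyperplane case right is the step I expect to need the most care.
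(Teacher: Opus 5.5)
The gap is the case $m_3=1$, i.e.\ $\underline m=(1,n-2,1)$. It lies inside the statement ($m_1=1$, $m_2=n-2\neq 1$), and your proposal leaves it open ("I would first try\dots"). The rest is correct. The reduction (by the case $0)$ classification, the ranks see exactly the $B'$-orbits of $D_1$ and $D_2$ separately) holds, the base example for $(n_1,n_2)=(2,3)$, $\underline m=(1,2,2)$ works, and the padding argument covers $m_3\geqslant 2$.

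Your route differs from the paper's. The paper keeps the line fixed ($e_3+f_2$, with $\underline n=(3,2)$) and varies the $3$-space. It checks equal invariants by computing case $0)$ normal forms of both projections, and proves non-conjugacy by an entrywise matrix computation. Your stabilizer-of-$W$ argument is more transparent and is what makes a systematic extension possible; the paper only asserts the extension to general $\underline n,\underline m$ in one line. One small slip: $f_1,f_2\notin W$ does not by itself force $W\cap\Span(f_1,f_2,f_3)=\Span(f_3)$, because a plane can avoid both $f_1$ and $f_2$. The equality does hold here by direct computation, and, as below, you never need it.

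The gap closes once you look at what your non-conjugacy argument actually uses. Write $v=\sum_i x_ie_i+\sum_j y_jf_j$ and let $g\in\mathrm{Stab}_{B'}(W)$. Then $ge_2\in W\cap\Span(e_1,\dots,e_{n_1})$, and $gf_3$ lies in the second block automatically. So $gL$ has zero $e_1$-coordinate as soon as every vector of $W\cap\Span(e_1,\dots,e_{n_1})$ has $x_1=0$, whereas $L'$ has $x_1=1$. No condition on $W\cap\Span(f_1,\dots,f_{n_2})$ is needed.

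Hence $f_1$ is also admissible padding. This gives $n-4$ padding vectors, enough for every $m_2\leqslant n-2$, hence for every $m_3\geqslant1$. With all padding used, $W=\{x_1=y_2\}=(e_1-f_2)^\perp$, which is exactly your first suggestion. This $W$ contains $e_2$, $f_3$ and $e_1+f_2$, and $W\cap\Span(e_1,\dots,e_{n_1})=\Span(e_2,\dots,e_{n_1})$, so your $L$ and $L'$ work verbatim. When $n_2=2$, swap the blocks and use $W=\{y_1=x_2\}$. You need to write this case out for the proof to be complete.
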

\begin{proof}
	Let us first treat the case of $\underline n = (3,2)$ and $\underline m = (1,2,2)$. Consider the flags
	\begin{equation*}
		D_1 = \left[\begin{tabular}{c|cc}
	0 & 1 & 0 \\
	0 & 0 & 1 \\
	1 & 0 & 0 \\
	0 & 0 & 1 \\
	1 & 1 & 0 
\end{tabular}\right], \hspace{3mm} D_2 = \left[\begin{tabular}{c|cc}
	0 & 1 & 0 \\
	0 & 0 & 1 \\
	1 & 0 & 0 \\
	0 & 1 & 1 \\
	1 & 1 & 0 
\end{tabular}\right].
	\end{equation*}
Note that
\begin{equation*}
	\Phi_{\underline m \to (1,4)}(D_1) = \left[\begin{tabular}{c}
	0\\
	0\\
	1\\
	0\\
	1
\end{tabular}\right], \hspace{5mm} \Phi_{\underline m \to (1,4)}(D_2) = \left[\begin{tabular}{c}
	0\\
	0\\
	1\\
	0\\
	1
\end{tabular}\right],
\end{equation*}
\begin{equation*}
	\Phi_{\underline m \to (3,2)}(D_1) = \left[\begin{tabular}{ccc}
	-1 & 1 & 0 \\
	0 & 0 & 1 \\
	1 & 0 & 0 \\
	0 & 0 & 1 \\
	0 & 1 & 0 
\end{tabular}\right] \sim \left[\begin{tabular}{ccc}
	0 & 1 & 0 \\
	0 & 0 & 1 \\
	1 & 0 & 0 \\
	0 & 0 & 1 \\
	0 & 1 & 0 
\end{tabular}\right],
\end{equation*}
\begin{equation*}
	\Phi_{\underline m \to (3,2)}(D_2) = \left[\begin{tabular}{ccc}
	-1 & 1 & 0 \\
	0 & 0 & 1 \\
	1 & 0 & 0 \\
	-1 & 1 & 1 \\
	0 & 1 & 0 
\end{tabular}\right] \sim \left[\begin{tabular}{ccc}
	0 & 1 & 0 \\
	0 & 0 & 1 \\
	1 & 0 & 0 \\
	1 & 0 & 1 \\
	0 & 1 & 0 
\end{tabular}\right] = \left[\begin{tabular}{ccc}
	0 & 1 & 0 \\
	-1 & 0 & 1 \\
	1 & 0 & 0 \\
	0 & 0 & 1 \\
	0 & 1 & 0 
\end{tabular}\right] \sim \left[\begin{tabular}{ccc}
	0 & 1 & 0 \\
	0 & 0 & 1 \\
	1 & 0 & 0 \\
	0 & 0 & 1 \\
	0 & 1 & 0 
\end{tabular}\right]
\end{equation*}
so that $D_1$ and $D_2$ have the same $B'$-invariant ranks.
On the other hand, if we suppose that $D_1$ and $D_2$ are in the same $B'$-orbit then there exists 
\begin{equation*}
	b' = \begin{pmatrix}
	\alpha & \beta & \gamma & 0 & 0 \\
	0 & \delta & \varepsilon & 0 & 0 \\
	0 & 0 & \zeta & 0 & 0 \\
	0 & 0 & 0 & \eta & \theta \\
	0 & 0 & 0 & 0 & \iota
\end{pmatrix} \in B', \hspace{5mm} p = \begin{pmatrix}
	\kappa & \lambda & \mu \\
	0 & \nu & \xi \\
	0 & \rho & \sigma \\
\end{pmatrix} \in P_{\underline m},
\end{equation*}
such that
\begin{equation*}
	b' \begin{pmatrix}
	0 & 1 & 0 \\
	0 & 0 & 1 \\
	1 & 0 & 0 \\
	1 & 0 & 1 \\
	0 & 1 & 0
\end{pmatrix}p = \begin{pmatrix}
	0 & 1 & 0 \\
	0 & 0 & 1 \\
	1 & 0 & 0 \\
	1 & 1 & 1 \\
	0 & 1 & 0
\end{pmatrix},
\end{equation*}
namely
\begin{equation*}
	\begin{pmatrix}
	\ast & \ast & \ast \\
	\kappa \varepsilon & \lambda \varepsilon + \rho \delta & \ast \\
	\ast & \ast & \ast \\
	\kappa \theta & \lambda \theta + \nu \theta + \rho \eta & \ast \\
	\ast & \ast & \ast
\end{pmatrix} = \begin{pmatrix}
	0 & 1 & 0 \\
	0 & 0 & 1 \\
	1 & 0 & 0 \\
	1 & 1 & 1 \\
	0 & 1 & 0
\end{pmatrix}.
\end{equation*}
In particular, one has
\begin{equation*}
	\begin{cases}
	\kappa \theta = 0 \\
	(\lambda+\nu)\theta + \rho \eta = 1 \\
	\kappa \varepsilon = 0 \\
	\lambda \varepsilon + \rho \delta = 0
\end{cases}.
\end{equation*}
Since $\kappa \neq 0$, this implies that $\theta = \varepsilon = 0$ so that $\rho \delta = 0$. In turn, $\delta \neq 0$ so that $\rho = 0$ what is impossible in view of the second equation. Thus, we conclude that $D_1$ and $D_2$ belong to different $B'$-orbits. The theorem then follows for general $\underline n$ and $\underline m$ from observing that $\max(n_1,n_2) \geqslant 3$ whenever $n \geqslant 5$.
\end{proof}

\subsubsection*{Subcase $m_2 = 1$}

Following the $(GL(n,\R)/P_{\underline n})$-realization of $\D_{\underline n}$ from \Cref{flag varieties}, we will write any $D \in \D_{\underline m}$ as
\begin{equation*}
	D = \left[\begin{tabular}{ccc|c}
	$U_1$ & $\cdots$ & $U_{m_1}$ & $U$ \\
	$V_1$ & $\cdots$ & $V_{m_1}$ & $V$
\end{tabular}\right]
\end{equation*}
with $U,U_1,\dots, U_{m_1} \in \R^{n_1}$, $V,V_1,\dots,V_{m_1} \in \R^{n_2}$.

\begin{Proposition}{\label{orbits I3'}}
	Any flag $D \in \D_{\underline m}$ belongs to the $B'$-orbit of a flag of the form
	\begin{equation*}
		\left[\begin{tabular}{ccccccccc|c}
		$0$ & $\cdots$ & $0$  & $e_{j_{r+1}}$ & $\cdots$ & $e_{j_{s}}$ & $e_{j_{s+1}}$ & $\cdots$ & $e_{j_{m_1}}$ & $U$\\
		$f_{i_1}$ & $\cdots$ & $f_{i_{r}}$  & $f_{i_{r+1}}$ & $\cdots$ & $f_{i_{s}}$ & $0$ & $\cdots$ & $0$ & $V$
	\end{tabular}\right]
	\end{equation*}
	with either
	\begin{itemize}[label = --]
		\item $\begin{pmatrix}
			U \\
			V
		\end{pmatrix} = \begin{pmatrix}
			e_{j_0} \\
			0
		\end{pmatrix}, \begin{pmatrix}
			0 \\
			f_{i_0}
		\end{pmatrix}, \begin{pmatrix}
			e_{j_0} \\
			f_{i_0}
		\end{pmatrix}$,
		\item $\begin{pmatrix}
			U \\
			V
		\end{pmatrix} = \begin{pmatrix}
			0 \\
			\displaystyle \sum_{p=1}^q f_{i_{k_p}}
		\end{pmatrix}$ or $\begin{pmatrix}
			\displaystyle \sum_{p=1}^q e_{j_{k_p}} \\
			0
		\end{pmatrix}$ with some integers $k_1,\dots,k_q \in \{r+1,\dots,s\}$ satisfying
		\begin{equation*}
			i_{k_1} < \cdots < i_{k_q}, \hspace{3mm} j_{k_1} > \cdots > j_{k_q},
		\end{equation*}
		\item $\begin{pmatrix}
			U \\
			V
		\end{pmatrix} = \begin{pmatrix}
			0 \\
			\displaystyle f_{i_0}+\sum_{p=1}^q f_{i_{k_p}}
		\end{pmatrix}$ or $\begin{pmatrix}
			\displaystyle e_{j_0}+\sum_{p=1}^q e_{j_{k_p}} \\
			0
		\end{pmatrix}$ with some integers $k_1,\dots,k_q \in \{r+1,\dots,s\}$ satisfying $i_{k_1} < \cdots < i_{k_q}, \hspace{3mm} j_{k_1} > \cdots > j_{k_q}$ and $i_0 < i_{k_1}$ or $j_0 < j_{k_q}$.
		\item $\begin{pmatrix}
			U \\
			V
		\end{pmatrix} = \begin{pmatrix}
			e_{j_0} \\
			\displaystyle f_{i_0}+\sum_{p=1}^q f_{i_{k_p}}
		\end{pmatrix}$ with some integers $k_1,\dots,k_q \in \{r+1,\dots,s\}$ satisfying
		\begin{equation*}
			i_0 < i_{k_1} < \cdots < i_{k_q}, \hspace{3mm} j_{k_1} > \cdots > j_{k_q} > j_0,
		\end{equation*}
	\end{itemize}
\end{Proposition}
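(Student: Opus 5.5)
The plan is to reduce to Case $0)$ via the projection $\Phi_{\underline m \to (m_1,n-m_1)}$ and then normalize the last column $\,^t\begin{pmatrix} U & V\end{pmatrix}$ using only its stabilizer. By \Cref{projection equivariant}, the image of $D$ in $\D_{(m_1,1+m_2-1)}$, i.e. the first $m_1$ columns, may be put in the normal form of \Cref{normal form exist} by some $b' \in B'$. So we may assume $D$ has the displayed first $m_1$ columns, with pairwise distinct $i$'s and $j$'s. The remaining freedom is:
\begin{itemize}[label = --]
	\item the subgroup $S \subset B'$ stabilizing this $m_1$-plane,
	\item adding to $\,^t\begin{pmatrix} U & V\end{pmatrix}$ any combination of the first $m_1$ columns,
	\item rescaling the last column.
\end{itemize}
Since the last column only matters modulo the span $W$ of the first $m_1$ columns, we are really classifying $S$-orbits on lines in $\R^n/W$.

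First I would use the right action to clear $U$ on the coordinates $e_{j_{s+1}},\dots,e_{j_{m_1}}$ and $V$ on $f_{i_1},\dots,f_{i_r}$. The mixed columns $\,^t\begin{pmatrix} e_{j_t} & f_{i_t}\end{pmatrix}$, $r<t\le s$, allow trading an $e_{j_t}$-coordinate of $U$ for a $-f_{i_t}$-coordinate of $V$. Next I would determine $S$ explicitly. It contains the diagonal torus, since the normal form has monomial columns and torus elements act by column rescaling. It also contains those upper unipotent elements adding row $a$ to row $b<a$ which preserve $W$. In the $U$ block this works freely when $e_a$ is a pure row $j_{s+1..m_1}$ or a row absent from $W$, and similarly in the $V$ block. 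For paired rows, a shear is allowed only if it is compensated by a shear in the other block, i.e. $e_{j_t}\to e_{j_t}+ce_{j_{t'}}$ requires a simultaneous $f_{i_t}\to f_{i_t}+cf_{i_{t'}}$. That compensation is only upper triangular in both blocks when $j_{t'}<j_t$ and $i_{t'}<i_t$.

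With $S$ in hand, I would do a Gauss-type reduction on $(U,V)$. Coordinates of $U$ on rows outside $W$ can be killed down to the top one using free shears, giving a single $e_{j_0}$ or $0$; the same holds for $V$, giving $f_{i_0}$ or $0$. The remaining coordinates lie on paired rows and can be moved entirely to one side, say into $V$. Then, among the paired rows, a lower one absorbs a higher one via a compensated shear exactly when both index orders agree. Hence the surviving support is an antichain $i_{k_1}<\cdots<i_{k_q}$ with $j_{k_1}>\cdots>j_{k_q}$, and torus scaling makes all coefficients $1$. Interaction with $e_{j_0}$ or $f_{i_0}$ eliminates further terms when the free row lies above or below the chain, which produces the inequalities $i_0<i_{k_1}$ or $j_0<j_{k_q}$ in the listed cases. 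The case where both $e_{j_0}$ and a chain are present forces the chain onto the $V$ side with $j_{k_q}>j_0$, since otherwise $e_{j_0}$ could absorb them.

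The main obstacle is this last step: correctly computing which compensated shears lie in $S$ and checking that the antichain is truly the residual invariant rather than something further reducible. Here I would proceed carefully by induction on $q$, removing the largest-index term first, mirroring the argument of \Cref{lemme thm B bis}, and the bookkeeping of the mixed cases must produce exactly the four listed families.
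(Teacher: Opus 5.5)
The paper states this proposition without proof, so there is no argument of its own to compare with. Your route is to put $V_1$ in the Case $0)$ normal form $W$ via the equivariant projection to $\D_{(m_1,n-m_1)}$, and then classify orbits of $S=\mathrm{Stab}_{B'}(W)$ on lines of $\R^n/W$. This is the natural one, in the spirit of the elimination arguments of Case $0)$ and Case $\mathrm{III}')$, and it does prove the statement. However, two of your claims about $S$ are false and need correcting.

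\textbf{The torus.} $S$ does not contain the full diagonal torus. The paired columns $\,^t\begin{pmatrix} e_{j_t} & f_{i_t}\end{pmatrix}$ are not monomial, so a diagonal element with $\lambda_{j_t}\neq\mu_{i_t}$ moves $W$. Only the subtorus with $\lambda_{j_t}=\mu_{i_t}$ for $r<t\leqslant s$ lies in $S$. This is harmless. On $\R^n/W$, use the coordinates $x_a$ (for $e_a$ a free row), $y_b$ (for $f_b$ a free row) and $z_t=v_{i_t}-u_{j_t}$ (for $t$ paired). The subtorus still rescales each of these coordinates by an independent character, so your normalization of all coefficients to $1$ goes through.

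\textbf{The shears.} A shear $e_a\mapsto e_a+ce_b$ ($b<a$) with $e_a\in W$ a pure row does not preserve $W$ unless $e_b\in W$ as well. The shears preserving $W$ are those whose source row $a$ is free, which fix $W$ pointwise, or whose target $e_b$ lies in $W$, which act trivially on $\R^n/W$. You never actually use the wrong ones, so nothing breaks.

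With these corrections, the action of $S$ on $\R^n/W$ is exactly:
\begin{itemize}[label = --]
\item upper triangular on the $x$'s and on the $y$'s separately;
\item $z_t\mapsto z_t-cx_a$ for $j_t<a$, and $z_t\mapsto z_t+cy_a$ for $i_t<a$;
\item $z_{t'}\mapsto z_{t'}+cz_t$ whenever $j_{t'}<j_t$ and $i_{t'}<i_t$.
\end{itemize}
You should state the key step in these terms rather than ``when the free row lies above or below the chain''. The surviving free coordinate is the nonzero one of \emph{largest} index, since shears only push a row into rows of smaller index; ``killed down to the top one'' is the wrong direction if read literally. Then $e_{j_0}$ (resp.\ $f_{i_0}$) kills exactly the paired terms with $j_t<j_0$ (resp.\ $i_t<i_0$). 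This is where the inequalities $j_0<j_{k_q}$ and $i_0<i_{k_1}$ come from. What remains of the paired support is its set of maximal elements for the product order, which is an antichain.

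\textbf{One representative is not in the list.} When $e_{j_0}$ is present, $f_{i_0}$ is absent and the chain is nonempty, your convention of putting the chain on the $V$ side yields $\left(e_{j_0},\sum_p f_{i_{k_p}}\right)$, which is not literally one of the listed forms. Subtract the paired columns and rescale by the subtorus to reach the second form of the third family. Nothing is ``forced'' onto the $V$ side; it is a choice of representative.

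\textbf{The stated main obstacle is not needed here.} Showing that the antichain cannot be reduced further is the uniqueness statement of the theorem that follows the proposition. This proposition only asserts existence, so it suffices to check that the specific elements you use lie in $S$, which is immediate.
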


\begin{Theorem}
	The flags described in \Cref{orbits I3'} have different invariant ranks. In particular, the normal form of \Cref{orbits I3'} is unique and the map
	\begin{equation*}
		\Psi : B'\backslash G/P_{\underline m} \to \displaystyle \prod_{\substack{L \in \mathcal Q_{B'} \\ H \in \mathcal Q_{P_{\underline m}}}} L\backslash G/H
	\end{equation*}
	is injective.
\end{Theorem}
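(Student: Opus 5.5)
The plan mirrors \Cref{normal form unique}: read off every parameter of the normal form of \Cref{orbits I3'} from $(B'\times P_{\underline m})$-invariant ranks, i.e. from the functions $\rank_{J,s}$ with $s\in\{1,2\}$ and $J\in\mathcal J$, the family of $B'$-stable index sets listed in \Cref{cas 0}. Two flags with the same invariant ranks then have the same normal form. Since the normal forms of \Cref{orbits I3'} exhaust all orbits, this gives injectivity of $\Psi$ via \Cref{lemme_rank}. Uniqueness of the normal form follows because distinct normal forms are separated by an invariant.

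\emph{Step 1.} The projection $\Phi_{\underline m\to(m_1,n-m_1)}(D)$ is $B'$-equivariant by \Cref{projection equivariant}. Its normal form is exactly the first block $[0\cdots0\ e_{j}\cdots\,|\,f_{i}\cdots0]$ of \Cref{theorem B'}. Hence, by \Cref{normal form unique}, the data $r,s$, the indices $i_1,\dots,i_s$, $j_{r+1},\dots,j_{m_1}$ and the pairing of the columns ${}^t(e_j\ f_i)$ are determined by the ranks $\rank_{J,1}$.

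\emph{Step 2.} It remains to recover the last column ${}^t(U\ V)$ modulo the first block, using the ranks $\rank_{J,2}$. I will compute the increments
\begin{equation*}
\delta_J=\rank_{J,2}(D)-\rank_{J,1}(D)\in\{0,1\}
\end{equation*}
for $J=\{j,\dots,n_1\}\cup\{n_1+i,\dots,n\}$, together with the pure sets $J=\{j,\dots,n_1\}$ and $J=\{n_1+i,\dots,n\}$. On the pure sets:
\begin{itemize}
\item $j_0$ is the largest $j$ such that $\delta_{\{j,\dots,n_1\}}=1$;
\item $i_0$ is the largest $i$ such that $\delta_{\{n_1+i,\dots,n\}}=1$;
\item the absence of either is also detected.
\end{itemize}
This separates the seven shapes listed in \Cref{orbits I3'}.

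\emph{Step 3.} For the ``sum'' types, $V=\sum_p f_{i_{k_p}}$ (possibly with $f_{i_0}$), the indices $k_p$ must be detected through mixed sets. The column ${}^t(e_{j_{k}}\ f_{i_{k}})$ of the first block lets a vector $f_{i_k}$ be traded for $-e_{j_k}$ modulo $D_1$. Consequently, whether $f_{i_{k}}$ contributes to $V$ shows up as a jump in $\delta_J$ when $J$ passes from containing $\{n_1+i_k,\dots\}$ and $\{j_k+1,\dots\}$ to containing $j_k$. The monotonicity conditions $i_{k_1}<\cdots<i_{k_q}$ and $j_{k_1}>\cdots>j_{k_q}$ are exactly what makes this "staircase" visible. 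My plan is to induct on $q$: let $k_q$ be the term of largest $i$, located by the last jump of $\delta_J$ along $J=\{n_1+i,\dots,n\}$; subtract it using the $B'$-action and the column it pairs with; then repeat.

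\emph{Main obstacle.} The hard step is the bookkeeping of Step 3 for the mixed type ${}^t(e_{j_0}\ f_{i_0}+\sum f_{i_{k_p}})$. Here both $U$ and $V$ are nonzero, and the inequalities $i_0<i_{k_1}$ and $j_{k_q}>j_0$ must be shown to force distinct rank vectors. The counterexample of \Cref{contre exemple} shows the same mechanism fails once the singleton block comes first. I will therefore check explicitly that for $m_2=1$ the relevant rank $\rank_{J,2}$ with $J=\{j_0+1,\dots,n_1\}\cup\{n_1+i_{k_p},\dots,n\}$ changes by exactly one at each $k_p$, and verify small cases such as $n=4,5$ by hand.
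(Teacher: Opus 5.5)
Your Steps 1 and 2 are sound and follow the template of \Cref{normal form unique}. The paper states this theorem without a written proof, so that case $0)$ argument is the only model. The projection to $\D_{(m_1,n-m_1)}$ pins down the first block, and on the pure sets one indeed has $\delta_{\{j,\dots,n_1\}}=1$ iff $j\le j_0$, and $\delta_{\{n_1+i,\dots,n\}}=1$ iff $i\le i_0$.

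However, the pure sets are blind to the summed terms. For example, $e_{j_0}$ and $e_{j_0}+\sum_p e_{j_{k_p}}$ give identical pure-set data. So Step 2 does not yet separate the shapes, and in particular $k_q$ cannot be located along $J=\{n_1+i,\dots,n\}$ as you propose. Everything therefore rests on Step 3, which you do not prove: you defer it to explicit checks and the cases $n=4,5$, which cannot settle the statement for all $n$. Moreover, the check you propose is false. Along $J=\{j_0+1,\dots,n_1\}\cup\{n_1+i,\dots,n\}$ one has $\delta_J=1$ for every $i\le i_{k_q}$. Also $\rank_{J,1}$ does not change when $i$ crosses any $i_{k_p}$, because the column $e_{j_{k_p}}+f_{i_{k_p}}$ always projects non-trivially, as $j_{k_p}>j_0$. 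So $\rank_{J,2}$ registers only $k_q$, not the intermediate $k_p$.

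The induction ``locate $k_q$, subtract it, repeat'' also fails as a uniqueness argument. Subtracting the paired column via $P_{\underline m}$ does not change the flag, hence does not change its invariants. Passing instead to a flag with that term deleted changes the orbit, and you would then need its invariants expressed in terms of those of $D$, which is exactly the computation that is missing.

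The missing idea is a closed formula for $\delta_J$, obtained by working in $Q=\R^n/D_1$. A basis of $Q$ consists of:
\begin{itemize}
\item the classes of the free $e_{j'}$, with $j'\notin\{j_{r+1},\dots,j_{m_1}\}$;
\item the classes of the free $f_{i'}$, with $i'\notin\{i_1,\dots,i_s\}$;
\item the classes $\bar f_{i_t}=-\bar e_{j_t}$ for $r<t\le s$.
\end{itemize}
Take $J=\{j,\dots,n_1\}\cup\{n_1+i,\dots,n\}$. The image of $\ker p_J=\Span(e_{j'},f_{i'}\mid j'<j,\ i'<i)$ in $Q$ is the coordinate subspace spanned by the free $e_{j'}$ with $j'<j$, the free $f_{i'}$ with $i'<i$, and the $\bar f_{i_t}$ with $j_t<j$ or $i_t<i$. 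Since $\delta_J(D)=0$ iff $\bar w$ lies in this image, only the support of $\bar w$ in this basis matters. For a normal form this gives
\begin{equation*}
\delta_J(D)=1 \iff j\le j_0 \ \text{ or } \ i\le i_0 \ \text{ or } \ \exists p,\ j\le j_{k_p} \text{ and } i\le i_{k_p},
\end{equation*}
with $j_0=0$ (resp.\ $i_0=0$) when $e_{j_0}$ (resp.\ $f_{i_0}$) is absent.

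Now use the inequalities imposed in \Cref{orbits I3'} together with the distinctness of all indices. A mixed column $t$ of $D_1$ occurs in the sum iff $\delta_J=0$ for $J=\{j_t+1,\dots,n_1\}\cup\{n_1+i_t,\dots,n\}$ and $\delta_J=1$ for $J=\{j_t,\dots,n_1\}\cup\{n_1+i_t,\dots,n\}$. This is the precise form of the jump you mention at the start of Step 3, and it recovers all the $k_p$ at once, with no induction.

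Note also that the two alternatives in the second bullet of \Cref{orbits I3'} define the same flag, since $\sum_p f_{i_{k_p}}\equiv-\sum_p e_{j_{k_p}} \bmod D_1$. Uniqueness of the normal form therefore has to be read modulo that identification.
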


\begin{Corollary}
	Closed orbits are given by the normal forms
	\begin{equation*}
		\left[\begin{tabular}{cccccc|c}
		$0$ & $\cdots$ & $0$ & $e_{1}$ & $\cdots$ & $e_{m_1-r}$ & $e_{m_1-r+1}$ \\
		$f_1$ & $\cdots$ & $f_r$ & $0$ & $\cdots$ & $0$ & $0$
	\end{tabular}\right], \hspace{3mm} \left[\begin{tabular}{cccccc|c}
		$0$ & $\cdots$ & $0$ & $e_{1}$ & $\cdots$ & $e_{m_1-r}$ & $0$ \\
		$f_1$ & $\cdots$ & $f_r$ & $0$ & $\cdots$ & $0$ & $f_{r+1}$
	\end{tabular}\right].
	\end{equation*}
\end{Corollary}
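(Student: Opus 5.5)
The statement to prove is the last Corollary: in the subcase $m_2=1$ of Case $\mathrm{I}')$, the closed $B'$-orbits on $\D_{\underline m}$ are exactly the two displayed families of normal forms, and these are $B'$-fixed points. I will follow the two-step pattern of the earlier corollaries for Theorem~\ref{Theorem B} and Theorem~\ref{Theorem B'}.

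\emph{Step 1: the displayed flags are closed orbits.} The first $m_1$ columns span the flag $\Span(f_1,\dots,f_r,e_1,\dots,e_{m_1-r})$. This subspace is stable under $\T(n_1,\R)\times\T(n_2,\R)$, because upper triangular matrices preserve initial segments of each standard basis. For the first family, the $(m_1+1)$-dimensional space obtained by adding $e_{m_1-r+1}$ is again of the form $\Span(f_1,\dots,f_r,e_1,\dots,e_{m_1-r+1})$, hence also stable. The second family works the same way with $f_{r+1}$. So both flags are fixed by $B'$, and their orbits are singletons, hence closed.

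\emph{Step 2: every closed orbit is one of these.} Let $\mathcal O$ be closed, with normal form given by Proposition~\ref{orbits I3'}. Closedness and lower semicontinuity of the $\rank_{J,s}$ make $\mathcal O$ minimal for the closure order, so every degeneration of a representative must stay in $\mathcal O$. By uniqueness of normal forms (the preceding Theorem), it is enough to exhibit, whenever the form is not one of the displayed ones, a sequence in $\mathcal O$ whose limit has a different normal form.

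First, the projection $\Phi_{\underline m\to(m_1,n-m_1)}$ is $B'$-equivariant and continuous by Lemma~\ref{projection equivariant}, and it is a proper map between compact spaces. Hence it sends closed orbits to closed orbits. The Corollary following Theorem~\ref{Theorem B'} then forces the first $m_1$ columns to have $s=r$, $i_t=t$ and $j_t=t-r$.

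Next I treat the last column $\binom{U}{V}$ by the cases of Proposition~\ref{orbits I3'}.
\begin{enumerate}[label=(\alph*)]
\item If $s=r$, every case involving a sum $\sum_p f_{i_{k_p}}$ or $\sum_p e_{j_{k_p}}$ with $k_p\in\{r+1,\dots,s\}$ is empty, i.e.\ $q=0$. The last column is then $e_{j_0}$, $f_{i_0}$ or $e_{j_0}+f_{i_0}$.
\item For $e_{j_0}+f_{i_0}$, rescale the $f$ coordinates with $B'$ to get $e_{j_0}+\frac1N f_{i_0}$, and let $N\to\infty$. The limit has last column $e_{j_0}$, which is a different normal form, so this case is not closed.
\item For $e_{j_0}$ alone, I need $j_0=m_1-r+1$. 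Since $j_0$ avoids $\{1,\dots,m_1-r\}$, if $j_0>m_1-r+1$ then $e_{m_1-r+1}+\frac1N e_{j_0}$ lies in the orbit: it is obtained by upper triangular row operations followed by rescaling. Its limit as $N\to\infty$ is the displayed fixed point, which is distinct. The case $f_{i_0}$ is symmetric and forces $i_0=r+1$.
\end{enumerate}

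\emph{Main obstacle.} The delicate point is the first reduction. Invoking properness of the projection is clean, but I must check that uniqueness of normal forms still applies to the degenerate limits, and that those limits really are normal forms of Proposition~\ref{orbits I3'}: in particular, that adding $e_{m_1-r+1}$ with $N\to\infty$ does not collapse the rank. I would verify the rank claim directly via $\rank_{J,s}$. As a fallback, one could bypass the projection argument entirely and degenerate the columns $\binom{e_{j_t}}{f_{i_t}}$ with $t>r$ by the same $\frac1N$ trick used in the proof of the corollary to Theorem~\ref{Theorem B'}.
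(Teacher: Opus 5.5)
Your proof is correct. The paper states this corollary without proof. The closest models are the proofs of the closed-orbit corollaries for Theorem~\ref{Theorem B} and Theorem~\ref{Theorem B'}, which work entirely inside the normal forms by explicit $\frac{1}{N}$-degenerations.

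You depart from that template in one genuine way. Instead of degenerating the first block column by column, you push $\mathcal O$ forward along the $B'$-equivariant projection $\Phi_{\underline m\to(m_1,n-m_1)}$, which is a closed map because it is continuous from a compact space to a Hausdorff one. You then import the Grassmannian corollary, which forces $V_1=W=\Span(f_1,\dots,f_r,e_1,\dots,e_{m_1-r})$ to be a $B'$-fixed subspace. This is where the real economy lies:
\begin{itemize}
\item With $s=r$, all the sum-type last columns of Proposition~\ref{orbits I3'} disappear at once.
\item What remains is the classification of $B'$-orbits of lines in $\R^n/W$, with the three types $e_{j_0}$, $f_{i_0}$, $e_{j_0}+f_{i_0}$.
\item A template-style argument would instead have to degenerate the mixed columns $\binom{e_{j_t}}{f_{i_t}}$ while tracking a last column involving $\sum_p f_{i_{k_p}}$ or $\sum_p e_{j_{k_p}}$, which is considerably messier.
\end{itemize}

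The concerns in your ``main obstacle'' paragraph are harmless:
\begin{itemize}
\item The limit flags have full rank, since $e_{j_0},e_{m_1-r+1}\notin W$.
\item You do not need uniqueness of normal forms to see that the limits leave $\mathcal O$. In (c), the limit $F$ is a $B'$-fixed flag. If $F\in\mathcal O$, then $\mathcal O=\{F\}$ and so $D=F$, which is false.
\item In (b), the $B'$-invariant $\rank_{\{n_1+1,\dots,n\},2}$ drops from $r+1$ to $r$ in the limit.
\end{itemize}
Only closedness of $\mathcal O$ is used in Step 2; lower semicontinuity of the ranks plays no role there. As a by-product, your argument shows that the closed orbits in this subcase are exactly $B'$-fixed points, consistent with the claim made in the introduction.
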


\subsubsection*{Subcase $m_3 = 1$ and $m_2 \neq 1$} \label{contre exemple 2}

Following the same duality trick used in \Cref{dualité}, we identify the $B'$-orbits on $\D_{(m_1,m_2,m_3)}$ with the $\overline{B'}$-orbits on $\D_{(m_3,m_2,m_1)}$ using the map 
\begin{equation*}
	\Gamma : \begin{array}{ccc}
		\D_{(m_1,m_2,m_3)} & \to & \D_{(m_3,m_2,m_1)} \\
		(V_1 \subset \R^n) & \mapsto & (V_1^\perp \subset \R^n)
	\end{array}
\end{equation*}

\begin{Theorem}
	If $n \geqslant 5$, the map $\Psi : B'\backslash G/P_{\underline m} \to \displaystyle \prod_{\substack{L \in \mathcal Q_{B'} \\ H \in \mathcal Q_{P_{\underline m}}}} L\backslash G/H$ is not injective.
\end{Theorem}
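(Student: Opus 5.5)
We reduce to the subcase $m_1 = 1$, $m_2 \neq 1$ already settled in \Cref{contre exemple}, using the duality $\Gamma$. By \Cref{lemme adjoint}, for $u \in GL(n,\R)$ we have $\Gamma(u\cdot D) = {}^t u^{-1}\cdot \Gamma(D)$. Hence $\Gamma$ sends $B'$-orbits on $\D_{(m_1,m_2,1)}$ bijectively onto $\overline{B'}$-orbits on $\D_{(1,m_2,m_1)}$, where $\overline{B'}$ is the group of block-diagonal lower triangular matrices. Conjugating by the permutation matrix $w_0$ that reverses the order of the basis inside each of the two blocks $\R^{n_1}$ and $\R^{n_2}$ turns $\overline{B'}$ back into $B'$. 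Altogether, $D \mapsto w_0\Gamma(D)$ identifies $B'\backslash \D_{(m_1,m_2,1)}$ with $B'\backslash\D_{(1,m_2,m_1)}$. The target lies in the subcase $m_1' = 1$, $m_2' = m_2 \neq 1$, and $n$ is unchanged.

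Next we check that this identification is compatible with the invariant ranks, i.e.\ with the map $\Psi$. By \Cref{lemme_rank}, the components of $\Psi$ are the $B'$-invariant functions $\rank_{J,s}$. For a flag $D$ with $s$-th subspace $V_s$ of dimension $d$, and $J$ with complement $J^c$, the rank $\rank_{J,s}(D) = \dim p_J(V_s)$ satisfies
\begin{equation*}
\dim p_J(V_s) = d - \dim\big(V_s \cap \Span(e_i, i\in J^c)\big) = d - \big(d + |J^c| - n + \dim(V_s^\perp \cap \Span(e_i, i \in J))\big).
\end{equation*}
Since $V_s^\perp\cap\Span(e_i,i\in J)$ is the kernel of the restriction of $p_{J^c}$ to $V_s^\perp$, its dimension is $(n-d) - \rank_{J^c}(V_s^\perp)$. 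Therefore $\rank_{J,s}(D)$ is an affine function of $\rank_{J^c, l-s}(\Gamma(D))$, with constants depending only on $|J|$ and $d$.

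The map $J \mapsto J^c$ sends the subsets $J$ for which $\rank_J$ is $B'$-invariant (the unions of tails of the blocks listed in Case $0)$) to unions of heads of the blocks, i.e.\ to the $\overline{B'}$-invariant sets. After applying $w_0$, these become again the $B'$-invariant sets of Case $0)$. Thus two flags in $\D_{(m_1,m_2,1)}$ have the same invariant ranks if and only if their images under $w_0\Gamma$ do. In other words, $\Psi$ on $\D_{(m_1,m_2,1)}$ factors through $\Psi$ on $\D_{(1,m_2,m_1)}$ via a bijection on both sides.

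Now take the two flags $D_1, D_2 \in \D_{(1,m_2,m_1)}$ from \Cref{contre exemple}: they lie in distinct $B'$-orbits and have equal images under $\Psi$. Their preimages $\Gamma^{-1}(w_0^{-1}D_i)$ in $\D_{(m_1,m_2,1)}$ therefore lie in distinct $B'$-orbits and also have equal images under $\Psi$, which proves non-injectivity.

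The general case follows in the same way as in \Cref{contre exemple}, by embedding the $(3,2)$ or $(2,3)$ example, or one can directly dualize the general statement. The main obstacle is the bookkeeping in the second paragraph: one must verify that complementation combined with the block-wise reversal $w_0$ maps the family $\mathcal J$ of $B'$-invariant index sets exactly onto itself, including the pure-block sets $\{i,\dots,n_1\}$ and $\{n_1+j,\dots,n\}$ and the full set. If some edge sets (such as $J=\emptyset$ or $J$ the whole index set) fall outside $\mathcal J$, they give constant ranks, so the argument is unaffected.
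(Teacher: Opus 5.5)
Your argument is correct and follows the paper's route. The flags the paper writes down are exactly $\Gamma^{-1}(w_0 D_i)$, with $w_0$ the blockwise reversal and $D_i$ the flags of \Cref{contre exemple}. Where the paper only says ``the proof is the same'', you supply the missing justification: the affine relation between $\rank_{J,s}(D)$ and the rank of $w_0\Gamma(D)$ at the block-reversed complement of $J$ and index $l-s$, together with the fact that this operation permutes the $B'$-invariant index sets. This also handles all $\underline n$, $\underline m$ at once, by invoking \Cref{contre exemple} directly for $(1,m_2,m_1)$.
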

\begin{proof}
	The proof is the same as for \Cref{contre exemple} except we use the flags
	\begin{equation*}
		D_1 = \Gamma^{-1}\left(\left[\begin{tabular}{c|cc}
	1 & 0 & 0 \\
	0 & 0 & 1 \\
	0 & 1 & 0 \\
	1 & 1 & 0 \\
	0 & 0 & 1 
	\end{tabular}\right]\right), \hspace{3mm} D_2 = \Gamma^{-1}\left(\left[\begin{tabular}{c|cc}
		1 & 0 & 0 \\
		0 & 0 & 1 \\
		0 & 1 & 0 \\
		1 & 1 & 0 \\
		0 & 1 & 1 
	\end{tabular}\right]\right).
	\end{equation*}
\end{proof}

\subsection[Case II')]{Case $\mathrm{II}')$} \label{cas 2'}

We let $\underline n = (n_1,n_2)$ and $\underline m = (m_1,m_2,m_3)$ be compositions of $n$ satisfying $\min(n_1,n_2) = 2$ and $\min(m_1,m_2,m_3) \geqslant 2$. Permuting $n_1$ and $n_2$ if needed, we may assume that $\underline n = (n-2,2)$. Consider
\begin{equation*}
	B' = \left\{\begin{pmatrix}
	a_1 & \\
	& a_2
\end{pmatrix} \mid a_1 \in \T(n-2,\R), a_2 \in \T(2,\R)\right\} \subset GL(n,\R).
\end{equation*}
Following the $(GL(n,\R)/P_{\underline n})$-realization of $\D_{\underline n}$ from \Cref{flag varieties}, we will write any $D \in \D_{\underline m}$ as
\begin{equation*}
	D = \left[\begin{tabular}{ccc|ccc}
	$U_1$ & $\cdots$ & $U_{m_1}$ & $U_{m_1+1}$ & $\cdots$ & $U_{m_1+m_2}$ \\
	$V_1$ & $\cdots$ & $V_{m_1}$ & $V_{m_1+1}$ & $\cdots$ & $V_{m_1+m_2}$
\end{tabular}\right]
\end{equation*}
with $U_1,\dots, U_{m_1+m_2} \in \R^{n-2}$, $V_1,\dots,V_{m_1+m_2} \in \R^{2}$.

\begin{Theorem}
	The map $\Psi : B'\backslash G/P_{\underline m} \to \displaystyle \prod_{\substack{L \in \mathcal Q_{B'} \\ H \in \mathcal Q_{P_{\underline m}}}} L\backslash G/H$ is not injective.
\end{Theorem}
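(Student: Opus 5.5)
The plan is to imitate the proof of \Cref{contre exemple}: produce two explicit flags $D_1,D_2\in\D_{\underline m}$ in the smallest instance of case $\mathrm{II}')$, check that they have the same image under $\Psi$, show by a direct matrix computation that they lie in different $B'$-orbits, and then propagate the example to all $(\underline n,\underline m)$ satisfying $\mathrm{II}')$.

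\textbf{Reduction of $\Psi$ to two Grassmannian projections.} The maximal parabolic subgroups containing $P_{\underline m}$ are $P_{(m_1,n-m_1)}$ and $P_{(m_1+m_2,m_3)}$. By \Cref{projection equivariant} and \Cref{lemme_rank}, two flags have the same image under $\Psi$ exactly when the following holds. Their projections $\Phi_{\underline m\to(m_1,n-m_1)}$ and $\Phi_{\underline m\to(m_1+m_2,m_3)}$ must lie in the same $B'$-orbits of the corresponding Grassmannians. By \Cref{normal form unique} this reduces to comparing two normal forms of the type in \Cref{Theorem B'}.

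\textbf{Step 1: a minimal example.} I take the minimal case $\underline n=(4,2)$, $\underline m=(2,2,2)$, with $e_1,\dots,e_4$ the basis of $\R^4$ and $f_1,f_2$ that of $\R^2$. The mechanism of \Cref{contre exemple} should be reproduced in the two-dimensional factor. The first block will contain a vector with $f_2$-component, namely $e_a+f_2$. The second block will contain a column whose $f$-part is $f_1$ in $D_1$ and $f_1+f_2$ in $D_2$, with $e$-parts arranged so that the following holds.
\begin{itemize}
\item The extra $f_2$ can be absorbed in each Grassmannian separately. In $\Phi_{\underline m\to(2,4)}$ the second block is forgotten. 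In $\Phi_{\underline m\to(4,2)}$ one subtracts the first-block column and then uses the upper triangular $\T(4,\R)$-action, with $a$ chosen smaller than the $e$-index appearing in the second-block column, to kill the resulting $e$-discrepancy.
\item It cannot be absorbed in the full flag, because the unipotent radical of $P_{\underline m}$ only allows adding first-block columns to second-block ones, not the reverse.
\end{itemize}
Concretely, I would start from the candidates
\[
D_1=\left[\begin{tabular}{cc|cc} 0&1&0&0\\ 0&0&1&0\\ 1&0&0&0\\ 0&0&0&1\\ 0&0&0&1\\ 1&0&1&0\end{tabular}\right],
\qquad
D_2=\left[\begin{tabular}{cc|cc} 0&1&0&0\\ 0&0&1&0\\ 1&0&0&0\\ 0&0&0&1\\ 0&0&1&1\\ 1&0&1&0\end{tabular}\right],
\]
which is the example of \Cref{contre exemple} with one more column added to each of the first two blocks. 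I would adjust the entries by hand until both projection checks succeed. Each check consists in bringing the projection to the normal form of \Cref{normal form exist}.

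\textbf{Step 2: distinct orbits.} Suppose $b'D_2p=D_1$ with $b'\in B'$ and $p\in P_{\underline m}$, writing out the entries of $p$ block by block. As in \Cref{contre exemple}, I isolate the few entries of $b'D_2p$ located at the rows of $f_1$, $f_2$ and of the intermediate $e$-index. This should yield a small system of the form
\[
\kappa\theta=0,\qquad \kappa\varepsilon=0,\qquad \lambda\varepsilon+\rho\delta=0,\qquad (\ast)\theta+\rho\eta=1 .
\]
Here $\kappa$ is an invertible $2\times2$ Levi block entry, $\delta,\eta$ are diagonal entries of $b'$, and $\rho$ is the entry of $p$ in block position $(2,1)$, which must vanish for $p$ to lie in $P_{\underline m}$. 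The contradiction then follows exactly as before.

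\textbf{Step 3: general case.} For general $\underline n=(n-2,2)$ with $n-2\geq 4$ and general $m_i\geq2$, I append to each block of $D_1$ and $D_2$ new columns $e_i$ using new, larger indices $i$ of $\R^{n-2}$, with disjoint index sets for the different blocks. Both the equality of the two Grassmannian normal forms and the contradiction in Step 2 should persist, since the appended coordinates decouple. When $n_1=2$ instead, one swaps the roles of $n_1$ and $n_2$ as the paper does.

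\textbf{Main obstacle.} The main difficulty is Step 1: finding a pair of flags for which both Grassmannian projections coincide at the same time. The two-dimensional factor leaves very little room. I would first try the candidates above and, if they fail, search systematically among flags with $0/1$ entries in the smallest case. Once $\underline m=(2,2,2)$ works, the rest should be routine.
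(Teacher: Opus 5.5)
Your strategy (an explicit pair for $\underline n=(4,2)$, $\underline m=(2,2,2)$, then padding) is the paper's. However, for a non-injectivity statement the explicit pair is the entire proof, and the pair you propose does not work.

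\textbf{The candidates fail the $\Psi$-check.} Take $J=\{4,5,6\}=\{4\}\cup\{n_1+1,n_1+2\}$, which gives a $B'$-invariant rank. Restricted to rows $4,5,6$, your matrices read $(0,0,0,1),(0,0,0,1),(1,0,1,0)$ for $D_1$ but $(0,0,0,1),(0,0,1,1),(1,0,1,0)$ for $D_2$. Hence $\rank_{\{4,5,6\},2}(D_1)=2\neq 3=\rank_{\{4,5,6\},2}(D_2)$, and by \Cref{lemme_rank} the $(P_J,P_{(4,2)})$-components of $\Psi(D_1)$ and $\Psi(D_2)$ already differ. The culprit is the second-block column $e_4+f_1$, whose $e$-index exceeds that of the first-block column $e_3+f_2$. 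Left multiplication by $\T(4,\R)$ adds multiples of the $a$-th coordinate to the $b$-th only when $b<a$. In the working pair the first-block vector carrying $f_2$ has the largest $e$-index, so this action can clear the $e$-discrepancy in the Grassmannian. That is the reverse of the rule you state. Interchanging $e_3$ and $e_4$ in your matrices gives the paper's pair up to reordering columns: first block $e_1,\ e_4+f_2$; second block $e_2+f_2$ (resp.\ $e_2+f_1+f_2$) and $e_3+f_1$. For this pair all invariant ranks of both projections agree.

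\textbf{Step 2 misreads \Cref{contre exemple}.} There $\rho$ is an entry of the $GL(2)$ Levi block of $p$, not of a block that vanishes in $P_{\underline m}$. Its vanishing is derived from $\kappa\neq 0$, which holds only because the first Levi block is $1\times 1$. With $m_1=2$ no single entry of that block is forced to be nonzero, so your anticipated system does not directly give a contradiction. For the corrected pair a direct argument works. Write vectors as $(x_1,\dots,x_4;y_1,y_2)$ and $b'=(t,u)\in\T(4,\R)\times\T(2,\R)$.
\begin{itemize}
\item The $4$-plane of $D_1$ is $\{y_1=x_3,\ y_2=x_2+x_4\}$, and the $4$-plane of $D_2$ contains $e_2-e_3+f_2=(e_2+f_1+f_2)-(e_3+f_1)$.
\item If $b'D_2=D_1$, then $b'$ preserves the common first block $\Span(e_1,e_4+f_2)$, which forces $u_{12}=0$.
\item Then $b'(e_2-e_3+f_2)$ has $y_1=0$ and $x_3=-t_{33}$, so $t_{33}=0$, which is absurd.
\end{itemize}

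\textbf{Step 3 is fine once justified.} With appended indices larger than $4$, the subspace $\Span(e_1,\dots,e_4,f_1,f_2)$ is $B'$-stable. Intersecting with it recovers the small flag equivariantly, and every invariant rank shifts by the same constant for both flags.
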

\begin{proof}
	Since $l = 3$ and $M \geqslant 2$, one has $n \geqslant 6$.
	The proof is the same as for \Cref{contre exemple} but with the flags
	\begin{equation*}
		D_1 = \left[\begin{tabular}{cc|cc}
	1 & 0 & 0 & 0 \\
	0 & 0 & 1 & 0 \\
	0 & 0 & 0 & 1 \\
	0 & 1 & 0 & 0 \\
	0 & 0 & 0 & 1 \\
	0 & 1 & 1 & 0 
\end{tabular}\right], \hspace{3mm} D_2 = \left[\begin{tabular}{cc|cc}
	1 & 0 & 0 & 0 \\
	0 & 0 & 1 & 0 \\
	0 & 0 & 0 & 1 \\
	0 & 1 & 0 & 0 \\
	0 & 0 & 1 & 1 \\
	0 & 1 & 1 & 0 
\end{tabular}\right].
	\end{equation*}
\end{proof}

% \begin{Corollary}
% 	Closed orbits are given by the normal forms
% 	$$\left[\begin{tabular}{ccc|ccc}
% 		$e_1$ & $\cdots$ & $e_{m_1}$ & $e_{m_1+1}$ & $\cdots$ & $e_{m_1+m_2}$ \\
% 		$0$ & $\cdots$ & $0$ & $0$ & $\cdots$ & $0$
% \end{tabular}\right], \hspace{3mm} \left[\begin{tabular}{cccc|ccc}
% 	$0$ & $e_1$ & $e_{m_1-1}$ & $e_{m_1}$ & $\cdots$ & $e_{m_1+m_2-1}$ \\
% 	$f_1$ & $0$ & $\cdots$ & $0$ & $\cdots$ & $0$
% \end{tabular}\right],$$
% $$\left[\begin{tabular}{ccc|cccc}
% 	$e_1$ & $\cdots$ & $e_{m_1}$ & $0$ & $e_{m_1+1}$ & $\cdots$ & $e_{m_1+m_2-1}$ \\
% 	$0$ & $\cdots$ & $0$ & $f_1$ & $0$ & $\cdots$ & $0$
% \end{tabular}\right], \hspace{3mm} \left[\begin{tabular}{ccccc|ccc}
% 	$0$ & $0$ & $e_1$ & $\cdots$ & $e_{m_1-2}$ & $e_{m_1-1}$ & $\cdots$ & $e_{m_1+m_2-2}$ \\
% 	$f_1$ & $f_2$ & $0$ & $\cdots$ & $0$ & $0$ & $\cdots$ & $0$
% \end{tabular}\right],$$
% $$\left[\begin{tabular}{cccc|cccc}
% 	$0$ & $e_1$ & $\cdots$ & $e_{m_1-1}$ & $0$ & $e_{m_1}$ & $\cdots$ & $e_{m_1+m_2-1}$\\
% 	$f_1$ & $0$ & $\cdots$ & $0$ & $f_2$ & $0$ & $\cdots$ & $0$
% \end{tabular}\right], \hspace{3mm} \left[\begin{tabular}{ccc|ccccc}
% 	$e_1$ & $\cdots$ & $e_{m_1}$ & $0$ & $0$ & $e_{m_1+1}$ & $\cdots$ & $e_{m_1+m_2-2}$\\
% 	$0$ & $\cdots$ & $0$ & $f_1$ & $f_2$ & $0$ & $\cdots$ & $0$	
% \end{tabular}\right].$$
% \end{Corollary}

%\subsection*{Acknowledgements}

%I am very grateful to Professor Toshiyuki Kobayashi for suggesting this problem to me. I also wish to thank Professors Michael Pevzner, Loïc Poulain d'Andecy and Victor Gayral for the fruitful discussions that drastically improved the quality of this paper.

\bibliographystyle{sigma}
\bibliography{biblio_SIGMA}
\LastPageEnding

\end{document}